\newcommand{\Spf}{\text{Spf}}
\newcommand{\Hom}{\text{Hom}}
\newcommand{\End}{\text{End}}
\newcommand{\Lie}{\text{Lie}}
\newcommand{\GL}{\text{GL}}
\newcommand{\Sh}{\text{Sh}}
\newcommand{\Ext}{\text{Ext}}
\newcommand{\Fil}{\text{Fil}}
\newcommand{\Ker}{\text{Ker}}
\newcommand{\Res}{\text{Res}}
\newcommand{\Z}{\mathbb{Z}}
\newcommand{\Q}{\mathbb{Q}}
\newcommand{\C}{\mathbb{C}}
\newcommand{\Gm}{\mathbb{G}_m}
\newcommand{\R}{\mathbb{R}}
\newcommand{\Af}{\mathbb{A}_{f}}
\newcommand{\Afp}{\mathbb{A}_{f}^{p}}
\newcommand{\inv}{^{-1}}
\newcommand{\hooklongrightarrow}{\lhook\joinrel\longrightarrow}
\newcommand{\D}{\mathbb{D}}
\newcommand{\module}{\Lambda}
\newcommand{\genring}{R}
\newcommand{\genscheme}{S}
\newcommand{\dieudonnemodule}{M}
\newcommand{\completion}{\widehat}
\newcommand{\genqisog}{\iota}
\newcommand{\borel}{B}
\newcommand{\levi}{L}
\newcommand{\parabolic}{P}
\newcommand{\unipotent}{U}
\newcommand{\witt}{W}
\newcommand{\quotientwitt}{K_0}
\newcommand{\BT}{X}
\newcommand{\deform}{\mathscr}
\newcommand{\nilpconnection}{\nabla}
\newcommand{\liftlow}{\textbf}
\newcommand{\DeligneLusztig}[3]{X_{\{#2\}}^{#1}([#3])}
\newcommand{\Def}{\text{Def}}
\newcommand{\weylgroup}{\Omega}
\newcommand{\characters}{X^*}
\newcommand{\cocharacters}{X_*}
\newcommand{\roots}{\Phi}
\newcommand{\coroots}{\Phi^\vee}
\newcommand{\newtonmap}{\nu}
\newcommand{\kottwitzmap}{\kappa}
\newcommand{\newtonset}{\mathcal{N}}
\newcommand{\unramext}{E}
\newcommand{\integerring}{\mathscr{O}}
\newcommand{\unram}{\text{un}}
\newcommand{\EL}{\widetilde}
\newcommand{\ELgroup}[1][]{%
\ifthenelse{\isempty{#1}}{{\text{Res}_{\unramext|\mathbb{Q}_p} \GL_n}}{\text{Res}_{\unramext_{#1}|\mathbb{Q}_p} \GL_{n_{#1}}}%
}
\newcommand{\ELgroupintegral}[1][]{%
\ifthenelse{\isempty{#1}}{{\text{Res}_{\integerring|\mathbb{Z}_p} \GL_n}}{\text{Res}_{\integerring|\mathbb{Z}_p} \GL_{#1}}%
}
\newcommand{\ELgalois}{\mathscr{I}}
\newcommand{\weylorbit}[1]{\langle #1 \rangle}
\newcommand{\generalhodge}{\underline}
\newcommand{\deformrings}[1][{}]{\textbf{C}_{\witt #1}}
\newcommand{\univ}{\text{univ}}
\newcommand{\ord}{\text{ord}}
\newcommand{\can}{\text{can}}
\newcommand{\ELf}{\mathfrak{f}}
\newcommand{\charzero}{\mathscr}
\newcommand{\shimuradatum}{(\charzero{G}, \mathfrak{H})}
\newcommand{\reflexfield}{E}
\newcommand{\place}{v}
\newcommand{\genfield}{K}
\newcommand{\reflexring}{\integerring_{\reflexfield}}
\newcommand{\localring}{\integerring_{\reflexfield, \place}}
\newcommand{\levelatp}{{\charzero{K}_p}}
\newcommand{\levelawayp}{{\charzero{K}^p}}
\newcommand{\level}{{\charzero{K}}}
\newcommand{\reflexresiduefield}{\kappa(v)}
\newcommand{\integralmodel}{\mathscr{S}}
\newcommand{\univabelsch}{\mathscr{A}}
\newcommand{\pisog}{\mathscr{J}}
\newcommand{\pisoggenfiber}{J}
\newcommand{\pisogspecialfiber}{\pisog_0}
\newcommand{\frobcorrsection}{\phi}
\newcommand{\frobcorr}{\Phi}
\newcommand{\heckealg}[1]{\mathcal{H}(#1, \Q)}
\newcommand{\heckealgsupp}[1]{\mathcal{H}_0(#1, \Q)}
\newcommand{\twistedsatake}{\dot{\mathcal{S}}^G_\levi}
\newcommand{\tatemodule}[1]{T_p(#1)}
\newcommand{\rattatemodule}[1]{V_p(#1)}
\newcommand{\candeform}[1]{\generalhodge{\deform{#1}}^\can}
\theoremstyle{theorem}
\newtheorem{theorem}[subsubsection]{Theorem}
\newtheorem{lemma}[subsubsection]{Lemma}
\newtheorem{prop}[subsubsection]{Proposition}
\newtheorem{cor}[subsubsection]{Corollary}
\newcounter{introtheorem}
\newtheorem{theorem*}[introtheorem]{Theorem}
\newtheorem{lemma*}[introtheorem]{Lemma}
\theoremstyle{definition}
\newtheorem{example}[subsubsection]{Example}
\newenvironment{remark}[1][Remark.]{\begin{trivlist}
\item[\hskip \labelsep {\bfseries #1}]}{\end{trivlist}}
\numberwithin{equation}{subsubsection}
\begin{document}

\tikzset{
    node style sp/.style={draw,circle,minimum size=\myunit},
    node style ge/.style={circle,minimum size=\myunit},
    arrow style mul/.style={draw,sloped,midway,fill=white},
    arrow style plus/.style={midway,sloped,fill=white},
}

%\title[Hodge-Newton Filtration for Hodge Type]{On the Hodge-Newton Filtration for $p$-divisible Groups of Hodge Type}
\title{On the Hodge-Newton Filtration for $p$-divisible Groups of Hodge Type}
%\title[\resizebox{6in}{!}{On the Hodge-Newton Filtration for $p$-divisible Groups of Hodge Type}]{On the Hodge-Newton Filtration for $p$-divisible Groups of Hodge Type}
\author{Serin Hong}
\address{Department of Mathematics, California Institute of Technology}
\email{shong2@caltech.edu}

\maketitle

\rhead{}
%\rhead{Serin Hong}

\chead{}
%---------------------------------------------------------------%

\begin{abstract}
A $p$-divisible group, or more generally an $F$-crystal, is said to be \emph{Hodge-Newton reducible} if its Hodge polygon passes through a break point of its Newton polygon. Katz proved that Hodge-Newton reducible $F$-crystals admit a canonical filtration called the \emph{Hodge-Newton filtration}. The notion of Hodge-Newton reducibility plays an important role in the deformation theory of $p$-divisible groups; the key property is that the Hodge-Newton filtration of a $p$-divisible group over a field of characteristic $p$ can be uniquely lifted to a filtration of its deformation.

We generalize Katz's result to $F$-crystals that arise from an unramified local Shimura datum of Hodge type. As an application, we give a generalization of Serre-Tate deformation theory for local Shimura data of Hodge type. We also apply our deformation theory to study some congruence relations on Shimura varieties of Hodge type. 
\end{abstract}

\tableofcontents

\section{Introduction}

The motivation of this study is to generalize Serre-Tate deformation theory to $p$-divisible groups with additional structures that arise in Shimura varieties of Hodge type. The classical Serre-Tate deformation theory states that, if $\BT$ is an ordinary $p$-divisible group over a perfect field $k$ of characteristic $p>0$, its formal deformation space has a canonical structure of a formal torus over $\witt(k)$, the ring of Witt vectors over $k$. As a consequence, we get a canonical lifting $\BT^\can$ over $\witt(k)$ corresponding to the identity section of the formal torus. When $k$ is finite, $\BT^\can$ can be characterized as the unique deformation of $\BT$ to which all endomorphisms of $\BT$ lift. These results first appeared in the Woods Hole reports of Lubin, Serre and Tate \cite{Lubin-Serre-Tate64}.

The classical Serre-Tate deformation theory is based on the fact that an ordinary $p$-divisible group over $k$ admits a canonical filtration, called the slope filtration, which can be uniquely lifted to $\witt(k)$. For general $p$-divisible groups, this is no longer true; the slope filtration is given only up to an isogeny, and it does not necessarily lift to $\witt(k)$. Still, one can try to study their deformations by finding a canonical filtration which can be uniquely lifted to $\witt(k)$. For example, Messing in \cite{Messing72} proved that the multiplicative-bilocal-\'etale filtration of a $p$-divisible group over $k$ can be uniquely lifted to $\witt(k)$.

In \cite{Katz79}, Katz identified a large class of objects in the category of $F$-crystals which admit such a filtration. We say that an $F$-crystal $\dieudonnemodule$ over $k$ is \emph{Hodge-Newton reducible} if its Hodge polygon passes through a break point of its Newton polygon. A specified contact point divides the Newton polygon into two parts $\newtonmap_1$ and $\newtonmap_2$ where the slopes of $\newtonmap_1$ are less than the slopes of $\newtonmap_2$, and similarly the Hodge polygon into two parts $\mu_1$ and $\mu_2$. A \emph{Hodge-Newton decomposition} of $\dieudonnemodule$  is a decomposition of the form
\[ \dieudonnemodule = \dieudonnemodule_1 \oplus \dieudonnemodule_2\]
such that the Newton (resp. Hodge) polygon of $\dieudonnemodule_i$ is $\newtonmap_i$ (resp. $\mu_i)$ for $i=1, 2$. Such a decomposition induces a filtration
\[ 0 \subset \dieudonnemodule_1 \subset \dieudonnemodule\]
such that $\dieudonnemodule/\dieudonnemodule_1 = \dieudonnemodule_2$; this filtration is referred to as a \emph{Hodge-Newton filtration} of $\dieudonnemodule$. Katz proved that every Hodge-Newton reducible $F$-crystal over $k$ admits a Hodge-Newton decomposition. For $F$-crystals that arise from a $p$-divisible group, the Hodge-Newton filtration coincides with the multiplicative-bilocal-\'etale filtration.

In this paper we extend Katz's result to $p$-divisible groups and $F$-crystals that arise from an unramified local Shimura datum of Hodge type. In the special case of $\mu$-ordinary $p$-divisible groups which replace ordinary $p$-divisible groups in this setting, our result yields a unique lifting of the slope filtration and consequently leads to a generalization of Serre-Tate deformation theory. We also study certain congruence relations on Shimura varieties of Hodge type using our generalization of Serre-Tate deformation theory.

As another application of our result, the author proved Harris-Viehmann conjecture for $l$-adic cohomology of Rapoport-Zink spaces of Hodge type under the Hodge-Newton reducibility assumption in \cite{Hong16}.

We remark on previously known results for $p$-divisible groups and $F$-crystals of PEL type. For $\mu$-ordinary $p$-divisible groups of PEL type, Moonen in \cite{Moonen04} proved the unique lifting of the slope filtration, and used it to generalize Serre-Tate deformation theory to Shimura varieties of PEL type. Moonen also applied this deformation theory to study some congruence relations on Shimura varieties of PEL type. Existence of the Hodge-Newton decomposition for general PEL cases is due to Mantovan and Viehmann in \cite{Mantovan-Viehmann10}. They also proved the unique lifting of the Hodge-Newton filtration under some additional assumptions, which were later removed by Shen in \cite{Shen13}. Mantovan in \cite{Mantovan08} and Shen in \cite{Shen13} used these results to verify Harris-Viehmann conjecture in this context.

Let us now explain our results in more detail. Assume that $k$ is algebraically closed of characteristic $p$. Let $\witt$ be the ring of Witt vectors over $k$, and let $\quotientwitt$ be its quotient field. Let $\sigma$ denote the Frobenius automorphism over $k$, and also its lift to $\witt$ and $\quotientwitt$. We will consider an unramified local Shimura datum of Hodge type $(G, [b], \{\mu\})$, which consists of an unramified connected reductive group $G$ over $\Q_p$, a $\sigma$-conjugacy class $[b]$ of $G(\quotientwitt)$ and a conjugacy class of cocharacters $\{\mu\}$ of $G(\witt)$ satisfying certain conditions (see \ref{BT with tensors} for details). Since $G$ is unramified, we can choose its reductive model over $\Z_p$, which we will also by $G$. We also fix an embedding $G \hookrightarrow \GL(\module)$ for some finite free $\Z_p$-module $\module$. 
With a suitable choice of $b \in [b]$, our Shimura datum gives rise to an $F$-crystal $\dieudonnemodule$ over $k$ with additional structures determined by the choice of an embedding $G \hookrightarrow \GL(\module)$. When $\{\mu\}$ is minuscule, we also get a $p$-divisible group $\BT$ over $k$ which corresponds to $\dieudonnemodule$ via Dieudonn\'e theory. We will write $\generalhodge{\dieudonnemodule}$ (resp. $\generalhodge{\BT}$) for $\dieudonnemodule$ (resp. $\BT$) endowed with additional structures.

To the local Shimura datum $(G, [b], \{\mu\})$ (and also to $\generalhodge{\dieudonnemodule}$ and $\generalhodge{\BT}$), we associate two invariants, called the Newton point and the $\sigma$-invariant Hodge point, as defined by Kottwitz in \cite{Kottwitz85}. When $G= \GL_n$ or EL/PEL type, these invariants can be interpreted as convex polygons with rational slopes; for $G=\GL_n$, these polygons agree with the classical Newton polygon and Hodge polygon. For general group $G$, however, these invariants do not necessarily have an interpretation as polygons. Therefore, the notion of Hodge-Newton reducibility for general local Shimura data is defined in terms of group theoretic language, with respect to a specified parabolic subgroup $\parabolic \subsetneq G$ and its Levi factor $\levi$.

Our strategy is to study Hodge-Newton reducible local Shimura data using the previously studied cases $G= \GL_n$ or EL/PEL type. The main technical challenge is that the notion of Hodge-Newton reducibility is not functorial. For example, for a Hodge-Newton reducible unramified local Shimura datum of Hodge type $(G, [b], \{\mu\})$, the datum $(\GL(\module), [b], \{\mu\})$ obtained via the embedding $G \hookrightarrow \GL(\module)$ is not necessarily Hodge-Newton reducible if $G$ is not split. We overcome this obstacle by proving the following lemma:

\begin{lemma*} There exists a group $\EL{G}$ of EL type with the following properties:
\begin{enumerate}[label=(\roman*)]
\item the embedding $G \hookrightarrow \GL(\module)$ factors through $\EL{G}$,
\item if $(G, [b], \{\mu\})$ is Hodge-Newton reducible with respect to a parabolic subgroup $\parabolic \subsetneq G$ and its Levi factor $\levi$, then the datum $(\EL{G}, [b], \{\mu\})$ is Hodge-Newton reducible with respect to a parabolic subgroup $\EL{\parabolic} \subsetneq \EL{G}$ and its Levi factor $\EL{\levi}$ such that $\parabolic = \EL{\parabolic} \cap G$ and $\levi = \EL{\levi} \cap G$. 
\end{enumerate}
\end{lemma*}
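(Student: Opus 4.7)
My plan is to construct $\EL{G}$ as the centralizer in $\GL(\module)$ of a commutative semisimple $\Z_p$-subalgebra arising from the $G$-isotypic decomposition of $V := \module \otimes_{\Z_p} \Q_p$, and then to define $\EL{\parabolic}$ and $\EL{\levi}$ using a cocharacter of the central torus $Z(\levi)^\circ$ of $\levi$. First, decompose $V$ into its $G$-isotypic components $V = \bigoplus_i V_i$ over $\Q_p$; the commutant $B := \End_G(V)$ is a semisimple $\Q_p$-algebra whose center $Z(B) = \prod_i F_i$ is a product of finite extensions of $\Q_p$. The key observation is that unramifiedness of $G$ forces each $F_i$ to be unramified: since $G$ splits over $\Q_p^{\unram}$, the $\Gal(\overline{\Q_p}/\Q_p)$-action on the absolutely irreducible constituents of $V_{\overline{\Q_p}}$ factors through $\Gal(\Q_p^{\unram}/\Q_p)$, and each $F_i$ appears as the fixed field of the stabilizer of a Galois orbit. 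Taking $\integerring := \prod_i \integerring_{F_i}$ and, if necessary, replacing $\module$ by an $\integerring$-stable lattice in $V$, I define
\[
\EL{G} := \prod_i \Res_{\integerring_{F_i}|\Z_p} \GL_{n_i},
\]
where $n_i$ is the $\integerring_{F_i}$-rank of $\module_i$. This is of EL type, and since $\integerring$ commutes with $G$ by construction, $G \hookrightarrow \EL{G}$ as the centralizer of $\integerring$, establishing (i).

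For (ii), I choose a cocharacter $\lambda \colon \Gm \to Z(\levi)^\circ$ defined over $\Q_p$ such that $\parabolic = P_G(\lambda)$ and $\levi = Z_G(\lambda)$ in the dynamic sense; such $\lambda$ exists because $Z(\levi)^\circ$ is a $\Q_p$-torus and $\parabolic,\levi$ are defined over $\Q_p$. Viewing $\lambda$ as a cocharacter of $\EL{G}$ via $\levi \hookrightarrow \EL{G}$, I set
\[
\EL{\parabolic} := P_{\EL{G}}(\lambda), \qquad \EL{\levi} := Z_{\EL{G}}(\lambda).
\]
The standard compatibility of dynamic parabolics and their Levis with closed subgroup inclusions yields $\EL{\parabolic} \cap G = P_G(\lambda) = \parabolic$ and $\EL{\levi} \cap G = Z_G(\lambda) = \levi$, as required.

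It then remains to verify Hodge-Newton reducibility of $(\EL{G}, [b], \{\mu\})$ with respect to $(\EL{\parabolic}, \EL{\levi})$ by transferring each defining condition from $G$ through the inclusion $\levi \subset \EL{\levi}$: the Newton cocharacter $\nu_b$, which factors through $Z(\levi)^\circ$ and commutes with $\lambda$, lies in $\EL{\levi}$; a representative of $\{\mu\}$ lying in $\levi$ also lies in $\EL{\levi}$; and the required positivity against the roots of $\EL{\parabolic}$ is built into the dynamic definition $\EL{\parabolic} = P_{\EL{G}}(\lambda)$.

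The main obstacle is the Galois-theoretic argument in the first step showing that the fields $F_i$ are unramified; this is precisely where the hypothesis that $G$ is unramified over $\Q_p$ is used in a decisive way and is what permits $\EL{G}$ to be of EL type rather than of a more general local-field shape. A secondary technical check that deserves care is the centrality of $\nu_b$ in $\EL{\levi}$ (not merely its containment), which may require refining the choice of $\lambda$ so that $\EL{\levi}$ is no larger than the centralizer of the subtorus of $Z(\levi)^\circ$ jointly cut out by the rationalizations of $\lambda$ and $\nu_b$.
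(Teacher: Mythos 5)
Your construction of $\EL{G}$ as the centralizer of a maximal commutative semisimple subalgebra of $\End_G(\module)$ is essentially the same object the paper builds, but the paper builds it more explicitly by decomposing $\module \otimes_{\Z_p} \Q^{\unram}$ into $T$-character spaces, grouping them into Weyl orbits, and then into $\sigma$-orbits of Weyl orbits; this makes the unramifiedness of the fields involved transparent (no need for the Galois-theoretic digression) and, more importantly, makes a feature visible that your proof never mentions and is in fact the crux of the lemma: the embedding $G \hookrightarrow \EL{G}$ is \emph{$\sigma$-equivariant} on cocharacters. Two further issues in your part (i): you allow yourself to replace $\module$ by an $\integerring$-stable lattice, but the lemma asks for the fixed embedding $G \hookrightarrow \GL(\module)$ to factor through $\EL{G}$, so you are not free to change $\module$ (the paper invokes Serre's Theorem 5 precisely to get the integral factorization for the given $\module$); and you need to rule out nontrivial division algebras in $\End_G(V)$ (otherwise $G$ need not embed into a product of Weil restrictions of $\GL_n$'s), which does hold for unramified $G$ but should be argued.

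The more serious gap is in part (ii). You assert that ``the required positivity against the roots of $\EL{\parabolic}$ is built into the dynamic definition $\EL{\parabolic} = P_{\EL{G}}(\lambda)$,'' but this is not correct: the dynamic construction guarantees $\langle \lambda, \alpha \rangle > 0$ on $\Lie(\EL{\unipotent})$, whereas Hodge-Newton reducibility demands $\langle \mu, \alpha \rangle \geq 0$ and $\langle \nu_G([b]), \alpha \rangle \geq 0$ for all roots $\alpha$ of $\Lie(\EL{\unipotent})$ — and $\mu$, $\nu_G([b])$ are not multiples of $\lambda$. Positivity of $\mu$ and $\nu_G([b])$ on $\Lie(\unipotent)$ (which is what your hypothesis gives) does not automatically extend to the larger $\Lie(\EL{\unipotent})$. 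What actually saves the day, and what the paper flags as the whole point, is that the map $\newtonset(G) \to \newtonset(\EL{G})$ carries $\bar\mu_G$ to $\bar\mu_{\EL{G}}$ because the embedding is $\sigma$-equivariant; coupled with functoriality of the Newton and Kottwitz maps this transfers the Hodge-Newton contact-point relation. The paper explicitly warns that the naive embedding into $\GL(\module)$ fails exactly because it is not $\sigma$-equivariant, so this equivariance is not a side detail but the reason the lemma is true. Your proof needs to establish it and use it in place of the incorrect ``built into the dynamic definition'' claim.
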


For simplicity, we may assume that $\EL{G}= \ELgroupintegral$ where $\integerring$ is the ring of integer for some finite unramified extension $\unramext$ of $\Q_p$. When $(G, [b], \{\mu\})$ is Hodge-Newton reducible with respect to a parabolic subgroup $\parabolic \subsetneq G$ and its Levi factor $\levi$, we can choose an element $b \in [b] \cap \levi(\quotientwitt)$ and a representative $\mu \in \{\mu\}$ which factors through $\levi$. The above lemma yields a Levi subgroup $\EL{\levi} \subsetneq \EL{G}$, which is of the form
\[ \EL{\levi} = \ELgroupintegral[n_1] \times \cdots \times \ELgroupintegral[n_r].\]
For $j=1, 2, \cdots, r$, we denote by $\EL{\levi}_j$ the $j$-th factor in the above decomposition, and by $\levi_j$ the image of $\levi = \EL{\levi} \cap G$ under the projection $\EL{\levi} \twoheadrightarrow \EL{\levi}_j$.  Then the datum $(G, [b], \{\mu\})$ induces local Shimura data $(\levi_j, [b_j], \{\mu_j\})$ via the projections $\levi \twoheadrightarrow \levi_j$.

Our first main result is existence of the Hodge-Newton decomposition in this setting. For $p$-divisible groups with additional structures, the theorem can be stated as follows:

\begin{theorem*}\label{intro theorem hn decomp} Assume that $(G, [b], \{\mu\})$ is Hodge-Newton reducible with respect to a parabolic subgroup $\parabolic \subsetneq G$ and its Levi factor $\levi$. Let $\generalhodge{\BT}$ be a $p$-divisible group over $k$ with additional structures corresponding to the choice $b \in [b] \cap \levi(\quotientwitt)$. Consider the local Shimura data $(\levi_j, [b_j], \{\mu_j\})$ as explained above. Then $\generalhodge{\BT}$ admits a decomposition 
\[ \generalhodge{\BT} = \generalhodge{\BT}_1 \times \cdots \times \generalhodge{\BT}_r\]
where $\generalhodge{\BT}_j$ is a $p$-divisible group over $k$ with additional structures that arises from the datum $(\levi_j, [b_j], \{\mu_j\})$.
\end{theorem*}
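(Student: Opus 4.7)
The approach is to reduce to the EL-type case via the preceding Lemma, obtain the decomposition of the underlying $p$-divisible group from the known EL-type Hodge-Newton decomposition, and then show that the Hodge tensors defining the $G$-structure descend appropriately to each factor to produce the claimed $\levi_j$-structures.

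First, I would apply the Lemma to obtain an EL-type group $\EL{G}$ with Levi $\EL{\levi} = \prod_{j=1}^r \EL{\levi}_j$ such that $(\EL{G}, [b], \{\mu\})$ is Hodge-Newton reducible with respect to $(\EL{\parabolic}, \EL{\levi})$. Applying the already known Hodge-Newton decomposition for EL-type groups to $(\EL{G}, [b], \{\mu\})$ yields a product decomposition of the underlying $p$-divisible group
\[ \BT = \BT_1 \times \cdots \times \BT_r, \]
where each $\BT_j$ carries an $\EL{\levi}_j$-structure arising from the datum induced via the projection $\EL{\levi} \twoheadrightarrow \EL{\levi}_j$. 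Equivalently, the Dieudonn\'e module splits as $\dieudonnemodule = \bigoplus_j \dieudonnemodule_j$; stability under the Frobenius $F = b\sigma$ and under the $\mu$-filtration is automatic, since $b \in \levi \subset \EL{\levi}$ and $\mu$ factors through $\levi \subset \EL{\levi}$.

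Next, I would transfer the $G$-structure onto each factor. The Hodge tensors $\deformtensor_\alpha \in \dieudonnemodule^\otimes$ are $G$-invariant (hence $\levi$-invariant) and Frobenius-fixed. Using the decomposition of $\dieudonnemodule^\otimes$ induced by $\dieudonnemodule = \bigoplus_j \dieudonnemodule_j$, I would isolate the ``pure'' component $\deformtensor_{\alpha,j} \in \dieudonnemodule_j^\otimes$ of each tensor. The collection $(\deformtensor_{\alpha,j})_\alpha$ then equips $\BT_j$ with a $\levi_j$-structure, where $\levi_j$ is the image of $\levi$ under $\EL{\levi} \twoheadrightarrow \EL{\levi}_j$. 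A direct check on Dieudonn\'e modules then confirms that each $\generalhodge{\BT}_j := (\BT_j, (\deformtensor_{\alpha,j})_\alpha)$ arises from $(\levi_j, [b_j], \{\mu_j\})$: the Frobenius is $b_j\sigma$ with $b_j \in \levi_j(\quotientwitt)$ the image of $b$, the Hodge filtration comes from $\mu_j$, and the pure tensors define the $\levi_j$-structure. Finally, the product $\prod_j \generalhodge{\BT}_j$ recovers $\generalhodge{\BT}$ via the embedding $\levi \hookrightarrow \prod_j \levi_j$.

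The main technical obstacle lies in the compatibility of the Hodge tensors with the decomposition: one must argue that the pure components $\deformtensor_{\alpha,j}$ carry the full content of the $G$-structure, i.e.\ that the mixed components of each $\deformtensor_\alpha$ do not contribute additional data. The key input is Hodge-Newton reducibility at the level of $G$ itself---not merely at the level of $\EL{G}$: the Newton cocharacter $\nu_b$ factors through $\levi$ and acts with distinct Newton slopes on distinct summands $\dieudonnemodule_j$. Combined with the $G$-invariance (hence $\nu_b$-invariance) and Frobenius-invariance of each tensor, this severely constrains the mixed components. Adapting the PEL-type argument of Shen to this more general Hodge-type setting---where the tensors are arbitrary rather than polarizations---is the principal technical challenge of the proof.
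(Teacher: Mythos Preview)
Your first two steps match the paper exactly: reduce to the EL realization via the Lemma, then invoke the known Hodge--Newton decomposition for EL type (Mantovan--Viehmann) to split the underlying Dieudonn\'e module as $\dieudonnemodule = \bigoplus_j \dieudonnemodule_j$.

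Where you diverge is in equipping each factor with $\levi_j$-structure. You propose to project the original Hodge tensors $(t_\alpha)$ onto their ``pure components'' $t_{\alpha,j} \in \dieudonnemodule_j^\otimes$ and argue that these cut out $\levi_j$ inside $\GL(\dieudonnemodule_j)$. You correctly identify this as the principal technical challenge, and it is a real one: there is no general reason why the pure components of a family of tensors cutting out $G \subset \GL(\module)$ should cut out precisely the image $\levi_j \subset \GL(\module_j)$, and the slope argument you sketch would require substantial work to make precise.

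The paper sidesteps this entirely. After obtaining the decomposition of the underlying module, it does \emph{not} attempt to decompose the original tensors. Instead, it simply verifies that each tuple $(\levi_j, [b_j], \{\mu_j\})$ is itself an unramified local Shimura datum of Hodge type---an easy check, since $b \in \levi(\witt)\mu(p)\levi(\witt)$ implies $b_j \in \levi_j(\witt)\mu_j(p)\levi_j(\witt)$, and $\dieudonnemodule_j$ is already the required lattice. Once this is known, the general construction of \S2.3 equips $\dieudonnemodule_j$ with $\levi_j$-structure corresponding to the choice $b_j \in [b_j]$: one chooses a \emph{fresh} family of tensors on the underlying $\Z_p$-lattice whose pointwise stabilizer is $\levi_j$, and pulls them up to $\dieudonnemodule_j$. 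No compatibility with the original tensors $(t_i)$ is asserted or needed for the statement. The paper also records an alternative proof via the bijection of affine Deligne--Lusztig sets $\DeligneLusztig{G}{\mu}{b} \cong \DeligneLusztig{\levi}{\mu}{b} \hookrightarrow \prod_j \DeligneLusztig{\levi_j}{\mu_j}{b_j}$ together with their moduli interpretation, which again avoids any tensor decomposition.

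In short: your route is not obviously wrong, but the obstacle you flag is genuine and the paper's argument bypasses it completely by treating the $\levi_j$-structure as new abstract data attached to the datum $(\levi_j,[b_j],\{\mu_j\})$ rather than as something extracted from the original $G$-tensors.
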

We emphasize that this result also applies to $F$-crystals with additional structures, as our argument does not require $\{\mu\}$ to be minuscule.

The Hodge-Newton decomposition of $\generalhodge{\BT}$ in Theorem \ref{intro theorem hn decomp} induces the Hodge-Newton filtration of $\generalhodge{\BT}$
\[ 0 \subset {\BT}^{(r)} \subset {\BT}^{(r-1)} \subset \cdots \subset {\BT}^{(1)} = {\BT}\]
where ${\BT}^{(j)} = {\BT}_j \times \cdots \times {\BT}_r$ for $j=1, 2, \cdots, r$. Note that each quotient $\BT^{(j+1)}/\BT^{(j)} \simeq \BT_j$ admits additional structures that arise from the datum $(\levi_j, [b_j], \{\mu_j\})$. Our second result is the unique lifting of the Hodge-Newton filtration to deformation rings. 

\begin{theorem*} \label{intro theorem hn filtration} Retain the notations in Theorem \ref{intro theorem hn decomp}. In addition, we assume that $p>2$. Let $\generalhodge{\deform{\BT}}$ be a deformation of $\generalhodge{\BT}$ over a formally smooth $\witt$-algebra $\genring$ of the form $\genring = \witt[[u_1, \cdots, u_N]]$ or $\genring = \witt[[u_1, \cdots, u_N]]/(p^m)$. Then ${\deform{\BT}}$ admits a unique filtration
\[ 0 \subset \deform{{\BT}}^{(r)} \subset \deform{{\BT}}^{(r-1)} \subset \cdots \subset \deform{{\BT}}^{(1)} = \deform{{\BT}} \]
with the following properties:
\begin{enumerate}[label=(\roman*)]
\item each $\deform{\BT}^{(j)}$ is a deformation of $\BT^{(j)}$ over $\genring$ (without additional structures),  
\item each $\generalhodge{\deform{\BT}^{(j)} / \deform{\BT}^{(j+1)}}$ is a deformation of $\generalhodge{\BT}_j$ over $\genring$ (with additional structures). 
\end{enumerate}
\end{theorem*}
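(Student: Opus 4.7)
My plan is to reduce to the EL-type case (where the analogous lifting statement is due to Shen \cite{Shen13}) using the lemma from the introduction, and then to descend the Hodge-type additional structures onto the graded pieces of the filtration so produced. By that lemma, the embedding $G \hookrightarrow \GL(\module)$ factors through an EL-type group $\EL{G}$ for which $(\EL{G}, [b], \{\mu\})$ is Hodge-Newton reducible with respect to some $\EL{\parabolic} \subsetneq \EL{G}$ and its Levi factor $\EL{\levi}$, with $\parabolic = \EL{\parabolic} \cap G$ and $\levi = \EL{\levi} \cap G$. Forgetting the Hodge-type structures, I would view $\deform{\BT}$ as a deformation of the EL-type $p$-divisible group attached to $(\EL{G}, [b], \{\mu\})$, and invoke Shen's theorem for this EL datum to produce a unique filtration
\[ 0 \subset \deform{\BT}^{(r)} \subset \cdots \subset \deform{\BT}^{(1)} = \deform{\BT}\]
over $\genring$ lifting the Hodge-Newton filtration of $\BT$ such that each $\deform{\BT}^{(j)}$ deforms $\BT^{(j)}$ and each quotient $\deform{\BT}^{(j)}/\deform{\BT}^{(j+1)}$ deforms $\BT_j$ as an $\EL{\levi}_j$-type $p$-divisible group. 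This step would immediately yield (i), and uniqueness in our theorem would follow from uniqueness in Shen's theorem because any filtration satisfying (i) is in particular a lift in the EL sense.

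The remaining content is (ii): that the Hodge-type tensors on $\deform{\BT}$ descend to graded pieces and equip each $\generalhodge{\deform{\BT}^{(j)}/\deform{\BT}^{(j+1)}}$ with the structure of a deformation of $\generalhodge{\BT}_j$. At the special fiber, Theorem \ref{intro theorem hn decomp} tells us that the Frobenius-invariant tensors $\deformtensor_\alpha$ on $\dieudonnemodule = \mathbb{D}(\BT)$ cutting out $G \subset \EL{G}$ are compatible with $\dieudonnemodule = \bigoplus_j \dieudonnemodule_j$: their projections onto $\dieudonnemodule_j^\otimes$, together with the structural tensors for $\EL{\levi}_j$, cut out $\levi_j \subset \EL{\levi}_j$ and furnish $\generalhodge{\BT}_j$ with its additional structures. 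To transfer this to $\genring$, I would invoke the deformation theory of $F$-crystals with tensors: since $p > 2$ and $\genring$ is formally smooth over $\witt$, the tensors on $\mathbb{D}(\deform{\BT})(\genring)^\otimes$ are the unique horizontal Frobenius-equivariant extensions of the $\deformtensor_\alpha$. Because the lifted sub- and quotient-crystals $\mathbb{D}(\deform{\BT}^{(j)})$ and $\mathbb{D}(\deform{\BT}^{(j)}/\deform{\BT}^{(j+1)})$ are themselves horizontal subobjects of $\mathbb{D}(\deform{\BT})$, the extended tensors must preserve them, and would thereby induce Frobenius-invariant tensors on each graded piece deforming the special-fiber tensors and endowing $\generalhodge{\deform{\BT}^{(j)}/\deform{\BT}^{(j+1)}}$ with the required structure.

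The main obstacle I expect is this last descent: ensuring that the tensors on $\mathbb{D}(\deform{\BT})$ genuinely preserve the lifted filtration so that passing to graded pieces yields well-defined tensors. The PEL analogue of the argument is handled in \cite{Shen13}, but the non-functoriality of Hodge-Newton reducibility for Hodge-type data — sidestepped at the special fiber by the lemma and by Theorem \ref{intro theorem hn decomp} — means the Hodge-type case requires additional bookkeeping to relate the tensors to the auxiliary EL-type filtration, and perhaps a separate check that the horizontal Frobenius-invariant extensions of the $\deformtensor_\alpha$ respect the parabolic structure coming from $\EL{\parabolic}$.
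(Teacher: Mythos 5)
Your proposal follows the paper for the first two steps: pass to an EL realization $\EL{G}$ via the lemma, use the closed embedding $\Def_{\BT,G}\hookrightarrow\Def_{\BT,\EL{G}}$ to regard $\deform{\generalhodge{\BT}}$ as a deformation with $\integerring$-module structure, and invoke Shen's Theorem 5.4 to produce the unique filtration. Your treatment of uniqueness (any filtration satisfying (i) is in particular a lift in the EL sense, and Shen's lift is unique) is also the paper's.

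Where you diverge is the crucial final step (ii), and here there is a genuine gap. You propose that the tensors on $\D(\deform{\BT})^\otimes$, being the unique horizontal Frobenius-equivariant extensions of the special-fiber tensors, ``must preserve'' the horizontal sub- and quotient-crystals and thereby ``induce'' tensors on the graded pieces. But a general element of $\D(\deform{\BT})^\otimes$ --- a sum of elements of various mixed tensor constructions --- has no intrinsic notion of ``preserving'' a filtered subobject, and there is no canonical map $\D(\deform{\BT})^\otimes\to\D(\deform{\BT}^{(j)}/\deform{\BT}^{(j+1)})^\otimes$ along which to push the tensors; that only makes literal sense for endomorphism-valued tensors. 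You flag this yourself as ``the main obstacle,'' but the proposal does not resolve it. The paper avoids it entirely by arguing group-theoretically at the special fiber: it records the Dieudonn\'e-module isomorphisms $\beta$, $\beta_1$, $\beta_2$ corresponding to $\alpha$, $\alpha_1$, $\alpha_2$, notes that $\beta\in G(\witt)$ (since $\alpha$ matches the $G$-tensors) and that each $\beta_j$, being induced by $\beta$ through $\EL{\levi}(\witt)\cap G(\witt)=\levi(\witt)\twoheadrightarrow\levi_j(\witt)$, lies in $\levi_j(\witt)$ rather than merely $\EL{\levi}_j(\witt)$; this is what upgrades Shen's $\EL{\levi}_j$-structured lift on the graded piece to a $\levi_j$-structured lift. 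You would need something of this nature --- an argument that the graded pieces, as points of $\Def_{\BT_j,\EL{\levi}_j}$, actually factor through $\Def_{\BT_j,\levi_j}$ --- to close the gap, rather than a direct ``descent of tensors along the filtration.''

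Two minor remarks: (a) when you write ``forgetting the Hodge-type structures, I would view $\deform{\BT}$ as a deformation of the EL-type $p$-divisible group,'' be careful that Shen's theorem requires the deformation to carry the $\integerring$-module structure, not to be a bare $p$-divisible group; the paper supplies this via Lemma \ref{functoriality of deformation spaces}. (b) The paper writes out only $r=2$ and remarks that the general case is analogous; your proposal is phrased for general $r$, which is fine in principle but means the $\levi_j(\witt)$-style bookkeeping would need to be carried out across the whole flag.
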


An important case is when $\generalhodge{\BT}$ is $\mu$-ordinary, i.e., the Newton point and the $\sigma$-invariant Hodge point of $\generalhodge{\BT}$ coincide. In this case, Theorem \ref{intro theorem hn decomp} gives us a ``slope decomposition"
\[ \generalhodge{\BT} = \generalhodge{\BT}_1 \times \generalhodge{\BT}_2 \times \cdots \times \generalhodge{\BT}_r.\]
Then Theorem \ref{intro theorem hn filtration} implies that the induced ``slope filtration" can be uniquely lifted to a filtration of a deformation of $\generalhodge{\BT}$. As a result, we find a generalization of Serre-Tate deformation theory. When $r=2$, the theorem can be stated as follows: 

\begin{theorem*}\label{intro serre-tate theory for two slopes}
Assume that $\generalhodge{\BT}$ is $\mu$-ordinary with two factors in its slope decomposition. If $p>2$, the formal deformation space $\Def_{\BT, G}$ of $\generalhodge{\BT}$ has a natural structure of a $p$-divisible group over $\witt$. More precisely, there exist two positive integers $h$ and $d$ (which can be explicitly computed) such that
\[ \Def_{\BT, G} \cong \EL{\deform{Y}}_h^{d}\]
as $p$-divisible groups over $\witt$, where $\EL{\deform{Y}}_h$ is the Lubin-Tate formal group of height $h$. 
\end{theorem*}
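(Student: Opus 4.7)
The plan is to apply Theorem \ref{intro theorem hn filtration} with $r = 2$ to identify the deformation functor with an $\Ext$-functor, and then to recognize that $\Ext$-functor as a power of a Lubin-Tate formal group.

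First, since $\generalhodge{\BT}$ is $\mu$-ordinary with slope decomposition $\generalhodge{\BT} = \generalhodge{\BT}_1 \times \generalhodge{\BT}_2$ supplied by Theorem \ref{intro theorem hn decomp}, each factor $\generalhodge{\BT}_j$ is isoclinic and $\mu$-ordinary for the datum $(\levi_j, [b_j], \{\mu_j\})$. For such an isoclinic $\mu$-ordinary object the Newton and $\sigma$-invariant Hodge points coincide and are central in $\levi_j$; I would show that this centrality forces the formal deformation functor of $\generalhodge{\BT}_j$ with additional structures to be trivial, so that each $\generalhodge{\BT}_j$ has a unique canonical lift $\candeform{\BT}_j$ over $\witt$. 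This is the Hodge-type analog of the existence of the canonical lift of an ordinary $p$-divisible group, and may be extracted from the uniqueness clause of Theorem \ref{intro theorem hn filtration} since $(\levi_j, [b_j], \{\mu_j\})$ admits no further Hodge-Newton reduction.

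Second, for any $R$ of the shape considered in Theorem \ref{intro theorem hn filtration}, a deformation $\generalhodge{\deform{\BT}}$ of $\generalhodge{\BT}$ over $R$ carries, by that theorem, a unique filtration $0 \subset \deform{\BT}^{(2)} \subset \deform{\BT}^{(1)} = \deform{\BT}$ in which $\deform{\BT}^{(2)}$ is a deformation of $\BT_2$ and $\deform{\BT}/\deform{\BT}^{(2)}$ is a deformation of $\BT_1$, both compatibly with the $\levi_j$-structures. By the first step the sub and the quotient are forced to be the pullbacks $\candeform{\BT}_{2, R}$ and $\candeform{\BT}_{1, R}$ of the canonical lifts to $R$. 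Conversely, any extension of $\candeform{\BT}_{1, R}$ by $\candeform{\BT}_{2, R}$ in the category of $p$-divisible groups over $R$ underlies a (necessarily unique) deformation carrying the $G$-structure, again by Theorem \ref{intro theorem hn filtration}. Altogether this produces a functorial identification
\[ \Def_{\BT, G}(R) \;\cong\; \Ext^1_R\bigl(\candeform{\BT}_{1, R},\; \candeform{\BT}_{2, R}\bigr). \]

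Third, I would identify the right-hand side with $\EL{\deform{Y}}_h^d$. Since the slopes of $\candeform{\BT}_1$ are strictly smaller than those of $\candeform{\BT}_2$, the classical theory of extensions of isoclinic $p$-divisible groups with well-separated slopes (as used by Messing and Moonen) shows that this $\Ext$-functor is representable on $\deformrings$ by a smooth formal $p$-divisible group $\formalscheme$ over $\witt$; its Dieudonn\'e module is the internal Hom $\Hom(\D(\candeform{\BT}_1), \D(\candeform{\BT}_2))$ equipped with the natural $F$- and $V$-actions. Using the embedding $G \hookrightarrow \EL{G}$ provided by the lemma of the introduction and the product decomposition $\EL{\levi} = \ELgroupintegral[n_1] \times \ELgroupintegral[n_2]$, this Hom-module decomposes as a direct sum of $d$ copies of the Dieudonn\'e module of a single Lubin-Tate formal group $\EL{\deform{Y}}_h$, where $h$ is the degree of the unramified extension common to the two EL blocks and $d$ is the off-diagonal rank.

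The main obstacle is the last step: representability of the $\Ext$-functor is essentially classical, but matching the Frobenius-Verschiebung structure on the Hom-Dieudonn\'e module with a power of a Lubin-Tate formal group requires careful bookkeeping of the $\EL{\levi}$-data produced by the introductory lemma, in particular verifying that after the EL reduction the two blocks differ by exactly one isoclinic slope, the very one realized by $\EL{\deform{Y}}_h$. The first two steps are formal consequences of Theorem \ref{intro theorem hn filtration} together with the rigidity of isoclinic $\mu$-ordinary objects established in the first step.
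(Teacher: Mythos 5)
Your first step is essentially correct and matches the paper's Proposition \ref{rigidity for isoclinic objects hodge type}: each isoclinic $\mu$-ordinary factor is rigid, yielding canonical lifts over $\witt$. The trouble is in the second step, and it is not a cosmetic one.

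You claim that any extension of $\candeform{\BT}_{1,R}$ by $\candeform{\BT}_{2,R}$ in the category of $p$-divisible groups underlies a (unique) deformation carrying the $G$-structure, and that this follows ``again by Theorem \ref{intro theorem hn filtration}.'' That theorem only runs in one direction: given a deformation of $\generalhodge{\BT}$ that already carries a $G$-structure, it produces a unique lift of the Hodge--Newton filtration. It does not assert, and it is not true in general, that an arbitrary extension of the two canonical lifts admits $F$-invariant lifts of the tensors $(t_i)$ in the sense of Proposition \ref{universal deformation of a p-divisible group with tate tensors}. The precise obstruction is visible in the conclusion you are trying to prove: the paper identifies $\Def_{\BT, G}$ with a power $\EL{\deform{\BT}}^\can(1, \ELf')^{d'}$ with $d' \leq d_1 d_2$, where $d_1 d_2$ is the exponent occurring in Moonen's theorem for the full EL object. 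Equality need not hold, so $\Def_{\BT, G}$ is generally a \emph{proper} sub-$p$-divisible group of the $\Ext$ group $\Def_{\BT, \EL{G}} \cong \Ext(\deform{\EL{\BT}}_1^\can, \deform{\EL{\BT}}_2^\can)$; identifying it with the full $\Ext$ group is therefore wrong. This in turn wrecks your third step, since computing the Dieudonn\'e module of the internal $\Hom$ would compute the wrong (too big) deformation space.

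What is actually needed, and what the paper does, is to show that $\Def_{\BT, G}$ is \emph{stable under the group law} of $\Def_{\BT, \EL{G}}$ (Baer sum of extensions). This is the genuinely nontrivial content: given two deformations $\deform{\generalhodge{\BT}}$ and $\deform{\generalhodge{\BT}}'$ with $G$-structure, one must exhibit tensors on the Baer sum $\deform{\BT} \odot \deform{\BT}'$ lifting $(t_i)$. The paper does this by enlarging to the family of \emph{all} tensors fixed by $G$ and exploiting that the lifted tensor families on $\deform{\generalhodge{\BT}}$ and $\deform{\generalhodge{\BT}}'$ restrict to the same tensors on the common quotient $\deform{\generalhodge{\BT}}_2^\can$; this makes the families glue on the Baer sum. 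Once $\Def_{\BT, G}$ is known to be a subgroup, one uses the closed embedding $\Def_{\BT, G} \hookrightarrow \Def_{\BT, \EL{G}}$ to see that it has finite $p$-torsion and hence is itself a $p$-divisible group, and the exponent $d'$ is then read off from the dimension. Without the stability-under-Baer-sum argument, the proposal does not establish that $\Def_{\BT, G}$ carries a group structure at all, let alone that it is a power of a Lubin--Tate group.
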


As an application of our deformation theory, we prove the following congruence relation on Shimura varieties of Hodge type:

\begin{theorem*} \label{intro congruence relation on shimura variety of hodge type}
Let $\shimuradatum$ be a Shimura datum of Hodge type. Let $\Phi$ denote the Frobenius correspondence on the associated Shimura variety in characteristic $p$, i.e., the special fiber of the associated integral model. Then we have a congruence relation $H_{\shimuradatum}(\Phi) = 0$ over the $\mu$-ordinary locus, where $H_{\shimuradatum}$ is the Hecke polynomial associated to the datum $\shimuradatum$. 
\end{theorem*}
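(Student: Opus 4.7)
The plan is to adapt the strategy Moonen used in the PEL setting \cite{Moonen04} to the Hodge type context, with our deformation-theoretic results playing the role that the PEL-type results played in Moonen's proof. The core idea is that the Hecke polynomial $H_\shimuradatum$ is built from the twisted Satake transform $\twistedsatake$ so that, up to bookkeeping of the central character, it is the characteristic polynomial -- acting in the minuscule representation of $\widehat{G}$ associated to $\{\mu\}$ -- of a canonical element of the Hecke algebra of the Levi $\levi$ associated to the slope decomposition of a $\mu$-ordinary object. Since Cayley--Hamilton holds automatically in that Levi Hecke algebra, the content of the theorem is to identify, on the $\mu$-ordinary locus, the Frobenius correspondence $\frobcorr$ with the geometric correspondence induced by that Levi Hecke element.

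First I would work in the formal completion at a $\mu$-ordinary point $x$ of the special fiber. By Theorem~\ref{intro theorem hn decomp}, applied with $\levi$ equal to the full slope Levi, the $p$-divisible group with tensors $\generalhodge{\BT}_x$ decomposes as $\generalhodge{\BT}_1 \times \cdots \times \generalhodge{\BT}_r$; Theorem~\ref{intro theorem hn filtration} lifts the induced slope filtration uniquely to any deformation over the formal neighbourhood; and the iterated form of Theorem~\ref{intro serre-tate theory for two slopes} realizes the deformation space as a cascade of Lubin--Tate-type formal groups. The key consequence is that any self-quasi-isogeny of the universal object that respects $\levi$ extends uniquely across the formal neighbourhood, so in particular the Frobenius isogeny of $\generalhodge{\BT}$ preserves the canonical slope filtration.

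Next I would identify $\frobcorr$ in Hecke-theoretic terms. On the $\mu$-ordinary locus, by the previous step, $\frobcorr$ acts on each graded piece $\generalhodge{\BT}_j$ as the element $b_j \sigma \in \levi_j(\quotientwitt)$ coming from the induced local Shimura datum $(\levi_j, [b_j], \{\mu_j\})$. Translating through the standard dictionary between self-quasi-isogenies of the universal $p$-divisible group with tensors and Hecke correspondences at $p$, the correspondence $\frobcorr$ agrees on the $\mu$-ordinary locus with the image of a distinguished element $\phi_\levi \in \heckealg{\levi}$. By the definition of the Hecke polynomial via the twisted Satake transform, the image of $H_\shimuradatum$ under $\twistedsatake$ is precisely the characteristic polynomial of $\phi_\levi$ in the representation of the dual Levi $\widehat{\levi}$ obtained by restricting the minuscule representation of $\widehat{G}$. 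Cayley--Hamilton inside $\heckealg{\levi}[T]$ then yields $H_\shimuradatum(\phi_\levi) = 0$, and transporting back along the correspondence dictionary gives $H_\shimuradatum(\frobcorr) = 0$ over the $\mu$-ordinary locus.

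The main obstacle is the rigorous identification of $\frobcorr$ with the correspondence induced by $\phi_\levi$. This demands that the slope filtration on the universal $\generalhodge{\BT}$ lift consistently across the entire $\mu$-ordinary locus, not merely over the formal neighbourhood at a single point, so that the Hecke operator $\phi_\levi$ is realized by a single geometric correspondence on the whole locus. Glueing the local lifts produced by Theorem~\ref{intro theorem hn filtration} into a global filtration is the technical heart of the argument, and it is here that the \emph{uniqueness}, rather than just the existence, of the lifted filtration is essential. A secondary but non-trivial point is the compatibility of $\twistedsatake$ with the auxiliary EL-type group $\EL{G}$ supplied by the lemma in the introduction, which reduces the explicit computation of $\phi_\levi$, and hence of the image of $H_\shimuradatum$ under $\twistedsatake$, to the EL case already handled in Moonen's and Wedhorn's work.
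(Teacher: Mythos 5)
Your proposal follows the same overall route as the paper, which in turn follows Moonen's PEL-type argument: work over the $\mu$-ordinary locus, use the unique lifting of the slope filtration to detect additional structure on $p$-isogenies, and then reduce the congruence relation to a Hecke-algebraic identity for the slope Levi $\levi$. You correctly identify the new geometric content — that the deformation-theoretic results of this paper supply the canonical slope filtration on all deformations, which is what lets one assign to each $p$-isogeny on the $\mu$-ordinary locus a ``$p$-type'' valued in $\levi(\Z_p)\backslash \levi(\Q_p)/\levi(\Z_p)$ rather than merely in $G(\Z_p)\backslash G(\Q_p)/G(\Z_p)$. You also rightly flag that the point is a global statement over the $\mu$-ordinary locus and not just a formal-neighborhood statement.

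Where your write-up is imprecise, and where I would push back, is the claim that ``Cayley--Hamilton holds automatically in that Levi Hecke algebra, [so] the content of the theorem is to identify $\frobcorr$ with the geometric correspondence induced by that Levi Hecke element.'' The unramified Hecke algebra $\mathcal{H}(\levi,\Q)$ is commutative, so there is no matrix on which Cayley--Hamilton could act; the assertion that the distinguished Frobenius element $\phi_\levi \in \mathcal{H}(\levi,\Q)$ satisfies $(\twistedsatake H_{\shimuradatum})(\phi_\levi)=0$ is a genuine identity in a commutative ring and must be proved. This is precisely B\"ultel's group-theoretic result (used in Wedhorn and cited by Moonen, Corollary 4.2.14), and it involves tracking the twisted Satake transform and the structure of the Hecke polynomial in the dual-group picture; it is not an automatic consequence of defining $H_\shimuradatum$ as a characteristic polynomial. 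The paper separates the two inputs cleanly: it first proves the commutative diagram relating $h$, $\bar h$, $\twistedsatake$, and the specialization map $\sigma$ (the geometric step, where the new deformation theory enters), and then invokes B\"ultel's result as a black box for the algebraic step. Your proposal conflates the two, presenting the algebraic step as trivial; I would rephrase to make clear that the congruence relation is the conjunction of (i) the commutative diagram you essentially describe and (ii) B\"ultel's nontrivial identity $(\twistedsatake H_{\shimuradatum})(\phi_\levi)=0$.
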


We remark that, after this paper was submitted, Shankar and Zhou in \cite{Shankar-Zhou16} independently obtained a similar generalization of Serre-Tate deformation theory using a different method.

We now give a brief description of the structure of this paper. In section 2, we recall some basic definitions, such as $F$-isocrystals with $G$-structure and unramified local Shimura data of Hodge type, and review Faltings's explicit construction of the ``universal deformation" of $p$-divisible groups with additional structures. In section 3, we define and  study the notion of Hodge-Newton reducibility for unramified local Shimura data of Hodge type (Theorem \ref{intro theorem hn decomp} and Theorem \ref{intro theorem hn filtration}). In section 4, we establish a generalization of Serre-Tate deformation theory for local Shimura data of Hodge type (e.g. Theorem \ref {intro serre-tate theory for two slopes}). In section 5, we briefly review the Newton stratification on the Shimura varieties of Hodge type and study some congruence relations on the $\mu$-ordinary locus (Theorem \ref{intro congruence relation on shimura variety of hodge type}).

\subsection*{Acknowledgments} I would like to sincerely thank my advisor E. Mantovan for her continuous encouragement and advice. I also thank T. Wedhorn for his helpful comments on a preliminary version of this paper.

\section{Preliminaries}

\subsection{Group theoretic notations} $ $

\subsubsection{}\label{notations on Weil restriction and Frobenius actions}
Throughout this paper, $k$ is a perfect field of positive characteristic $p$. We write $\witt(k)$ for the ring of Witt vectors over $k$, and $\quotientwitt(k)$ for its quotient field. We will often write $\witt = \witt(k)$ and $\quotientwitt = \quotientwitt(k)$. We generally denote by $\sigma$ the Frobenius automorphism over $k$, and also its lift to $\witt(k)$ and $\quotientwitt(k)$. 

Let $\module$ be a finitely generated free module over $\Z_p$. Then $\sigma$ acts on $\module_\witt = \module \otimes_{\Z_p} \witt$ and on $\GL(\module_\witt) = \GL(\module) \otimes_{\Z_p} W$ via $1 \otimes \sigma$. Alternatively, we may write this action as $\sigma(g) = (1 \otimes \sigma) \circ g \circ (1 \otimes \sigma\inv)$ for $g \in \GL(\module_\witt)$. We also have an induced action of $\sigma$ on the group of cocharacters $\Hom_\witt(\Gm, \GL(\module_\witt))$ defined by $\sigma(\mu)(a) = \sigma(\mu(a))$. 

For two $\Z_p$-algebras $\genring \subseteq \genring'$, we will denote by $\Res_{\genring'|\genring} \GL_n$ the Weil restriction of $\GL_n \otimes_{\genring} \genring'$. If $\integerring$ is a finite unramified extension of $\Z_p$, a choice of $\sigma$-invariant basis of $\integerring$ over $\Z_p$ determines an embedding of affine $\Z_p$-groups
\[ \Res_{\integerring|\Z_p}\GL_n \hookrightarrow \GL_{mn},\]
where $m = |\integerring : \Z_p|$. If $\module$ is a free module over $\integerring$ of rank $n$, then there is a natural identification $\Res_{\integerring|\Z_p} \GL(\module) \otimes_{\Z_p} \witt \cong \GL_{\integerring \otimes_{\Z_p} \witt}(\module_\witt)$ where the latter is identified with a product of $m$ copies of $\GL_n \otimes_{\Z_p} \witt$ after choosing a $\sigma$-invariant basis of $\integerring$ over $\Z_p$.

\subsubsection{}\label{def of unramified group}
Let $G$ be a connected reductive group over $\Q_p$ with a Borel subgroup $\borel \subseteq G$ and a maximal torus $T \subseteq G$. We will write $(\characters(T), \roots, \cocharacters(T), \coroots)$ for the associated root datum, and $\weylgroup$ for the associated Weyl group. The choice of $\borel$ determines a set of positive roots $\roots^+ \subseteq \roots$ and a set of positive coroots $\coroots{}^+ \subseteq \coroots$. The group $\weylgroup$ naturally acts on $\cocharacters(T)$ (resp. $\characters(T)$), and the dominant cocharacters (resp. dominant characters) form a full set of representatives for the orbits in $\cocharacters(T) / \weylgroup$ (resp. $\characters(T)/\weylgroup$).

Except for \ref{F-isocrystals with G-structure}, we will always assume that $G$ is \emph{unramified}. This means that $G$ satisfies the following equivalent conditions:
\begin{enumerate}[label=(\roman*)]
\item $G$ is quasi-split and split over a finite unramified extension of $\Q_p$. 
\item $G$ admits a reductive model over $\Z_p$. 
\end{enumerate}
When $G$ is unramified, we fix a reductive model $G_{\Z_p}$ over $\Z_p$, and will often write $G = G_{\Z_p}$ if there is no risk of confusion. We also fix a Borel subgroup $B \subseteq G$ and a maximal torus $T \subseteq G$ which are both defined over $\Z_p$.

For any local, strictly Henselian $\Z_p$ algebra $R$ and a cocharacter $\mu: \Gm \to G_R$, we denote by $\{\mu\}$ the $G(R)$-conjugacy class of $\mu$. We have identifications $\weylgroup \cong N_G(T)(R)/T(R)$ and $\cocharacters(T) \cong \Hom_R(\Gm, T_R)$, which induce a bijection between $\cocharacters(T)/\weylgroup$ and the set of $G(R)$-conjugacy classes of cocharacters for $G_R$.  We will be mostly interested in the case $R=\witt(k)$ for some algebraically closed $k$, where we also have a bijection
\[ \Hom_\witt(\Gm, G_\witt)/G(\witt) \cong \Hom_{\quotientwitt}(\Gm, G_{\quotientwitt})/G(\quotientwitt) \stackrel{\sim}{\rightarrow} G(\witt) \backslash G(\quotientwitt)/ G(\witt)\]
induced by $\{\mu\} \mapsto G(\witt) \mu(p) G(\witt)$; indeed, the first bijection follows from the fact that $G$ is split over $W$, while the second bijection is the Cartan decomposition.

\subsection{$F$-isocrystals with $G$-structure}\label{F-isocrystals with G-structure}$ $

We review the theory of $F$-isocrystals with $G$-structure due to R. Kottwitz in \cite{Kottwitz85} and  \cite{Kottwitz97}. We do not assume that $G$ is unramified for this subsection.

\subsubsection{}
Let $k$ be a perfect field of positive characteristic $p$. An $F$-isocrystal over $k$ is a vector space $V$ over $\quotientwitt(k)$ with an isomorphism $F: \sigma^* V \stackrel{\sim}{\rightarrow} V$. The dimension of $V$ is called the \emph{height} of the isocrystal. Let $F\text{-Isoc}(k)$ denote the category of $F$-isocrystals over $k$. For a connected reductive group $G$ over $\Q_p$, we define an \emph{$F$-isocrystal over $k$ with $G$-structure} as an exact faithful tensor functor
\[\text{Rep}_{\Q_p}(G) \rightarrow F\text{-Isoc}(k).\]

\begin{example}\label{F-isocrystal examples} (i) An $F$-isocrystal with $\GL_n$-structure is an $F$-isocrystal of height $n$. 

(ii) If $G = \Res_{\unramext|\Q_p} \GL_n$ where $\unramext|\Q_p$ is a finite extension of degree $m$, an $F$-isocrystal with $G$-structure is an $F$-isocrystal $V$ of height $mn$ together with a $\Q_p$-homomorphism $\iota: \unramext \to \End_k(V)$. 

(iii) If $G = \text{GSp}_{2n}$, an $F$-isocrystal with $G$-structure is an $F$-isocrystal $V$ of height $2n$ together with a non-degenerate alternating pairing $V \otimes V \to \mathbf{1}$, where $\mathbf{1}$ is the unit object of the tensor category $F\text{-Isoc}(k)$. 

\end{example}

\subsubsection{} \label{classification of F-isocrystals with G-structure by B(G)}
Let us now assume that $k$ is algebraically closed. We say that $b, b' \in G(\quotientwitt)$ are $\sigma$-conjugate if there exists $g \in G(\quotientwitt)$ such that $b' = g b \sigma(g)\inv$. We denote by $B(G)$ the set of all $\sigma$-conjugacy classes in $G(\quotientwitt)$. The definition of $B(G)$ is independent of $k$ in the sense that any inclusion $k \hookrightarrow k'$ into another algebraically closed field of characteristic $p$ induces a bijection between the $\sigma$-conjugacy classes of $G(\quotientwitt(k))$ and those of $G(\quotientwitt(k'))$. We will write $[b]_G$, or simply $[b]$ when there is no risk of confusion, for the $\sigma$-conjugacy class of $b \in G(\quotientwitt)$.

The set $B(G)$ classifies the $F$-isocrystals over $k$ with $G$-structure up to isomorphism. We describe this classification as explained in \cite{Rapoport-Richartz96}, 3.4. Given $b \in G(\quotientwitt)$ and a $G$-representation $(V, \rho)$ over $\Q_p$, set $N_b(\rho)$ to be $V \otimes_{\Q_p} \quotientwitt$ with a $\sigma$-linear automorphism $F=\rho(b) \circ (1 \otimes \sigma)$. Then $N_b: \text{Rep}_{\Q_p}(G) \rightarrow F\text{-Isoc}(k)$ is an exact faithful tensor functor. It is evident that two elements $b_1, b_2 \in G(\quotientwitt)$ give an isomorphic functor if and only if they are $\sigma$-conjugate. One can also prove that any $F$-isocrystal on $k$ with $G$-structure is isomorphic to a functor $N_b$ for some $b \in G(\quotientwitt)$. Hence the association $b \mapsto N_b$ induces the desired classification.

\subsubsection{} \label{def of newton set}
Let $\mathbb{D}$ be the pro-algebraic torus with character group $\Q$. We introduce the set
\[ \newtonset(G) := (\text{Int } G(\quotientwitt) \backslash \Hom_{\quotientwitt}(\mathbb{D}, G))^{\langle \sigma \rangle}.\]
If we fix a Borel subgroup $B \subseteq G$ and a maximal torus $T \subseteq G$, we can also write
\[ \newtonset(G) = (\cocharacters(T)_\Q / \weylgroup)^{\langle \sigma \rangle}.\]

We can define a partial order $\preceq$ on $\newtonset(G)$ as follows. Let $\bar{C}$ be the closed Weyl chamber. First we define a partial order $\preceq_1$ on $\cocharacters(T)_\R$ by declaring that $\alpha \preceq_1 \alpha'$ if and only if $\alpha' - \alpha$ is a nonnegative linear combination of positive coroots. Each orbit in $\cocharacters(T)_\R/\weylgroup$ is represented by a unique element in $\bar{C}$, so the restriction of $\preceq_1$ to $\bar{C}$ induces a partial order $\preceq_2$ on $\cocharacters(T)_\R/\weylgroup$. Then we take $\preceq$ to be the restriction of $\preceq_2$ to $(\cocharacters(T)_\Q/\weylgroup)^{\langle \sigma \rangle}$. 

\begin{remark} A closed embedding $G_1 \hookrightarrow G_2$ of connected reductive algebraic groups over $\Q_p$ induces an order-preserving map $\newtonset(G_1) \to \newtonset(G_2)$, which is not necessarily injective. \end{remark}

\subsubsection{} \label{def of newton map and kottwitz map} Kottwitz studied the set $B(G)$ by introducing two maps
\[ \newtonmap_G: B(G) \rightarrow \newtonset(G), \quad \kottwitzmap_G  : B(G) \rightarrow \pi_1(G)_{\langle \sigma \rangle} \]
called the Newton map and the Kottwitz map of $G$. We refer the readers to \cite{Kottwitz85}, \S 4 or \cite{Rapoport-Richartz96}, \S1 for definition of the Newton map, and \cite{Kottwitz97}, \S4 and \S7 for definition of the Kottwitz map. Both maps are functorial in $G$; more precisely, they induce natural transformations of set-valued functors on the category of connected reductive groups 
\[ \newtonmap: B(\cdot) \rightarrow \newtonset(\cdot), \quad \kottwitzmap: B(\cdot) \rightarrow \pi_1(\cdot)_{\langle \sigma \rangle}. \]

Given $[b] \in B(G)$ (and its corresponding $F$-isocrystal with $G$-structure), we will often refer to two invariants $\newtonmap_G([b])$ and $\kottwitzmap_G([b])$  respectively as the \emph{Newton point} and the \emph{Kottwitz point} of $[b]$. 
Kottwitz proved that a $\sigma$-conjugacy class is determined by its Newton point and Kottwitz point; in other words, the map
\[ \newtonmap_G \times \kottwitzmap_G : B(G) \rightarrow \newtonset(G) \times \pi_1(G)_{\langle \sigma \rangle}\]
is injective (\cite{Kottwitz97}, 4.13).

\begin{example}\label{newton map for GLn} We describe the Newton map for $G= GL_n$. Let $T$ be the diagonal torus contained in the Borel subgroup of lower triangular matrices. Then using the identification $\cocharacters(T) \cong \Z^n$ we can write 
\[\newtonset(\GL_n) = \{ (r_1, r_2, \cdots, r_n) \in \Q^n:  r_1 \leq r_2 \leq \cdots \leq r_n\},\]
which can be identified with the set of convex polygons with rational slopes. We have $(r_i) \preceq (s_i)$ if and only if $\displaystyle \sum_{i=1}^l (r_i - s_i) \geq 0$ for all $l \in \{1, 2, \cdots, n\}$, so the ordering $\preceq$ coincides with the usual ``lying above" order for convex polygons.

If $V$ is an $F$-isocrystal $V$ of height $n$ associated to $[b] \in B(\GL_n)$, its Newton point $\newtonmap_{\GL_n}([b])$ is the same as its classical Newton polygon. In this case, the Kottwitz point $\kottwitzmap_{\GL_n}([b])$ is determined by the Newton point $\newtonmap_{\GL_n}([b])$. Hence $V$ and $[b]$ are determined by the Newton point $\newtonmap_{\GL_n}([b])$, and we recover Manin's classification of $F$-isocrystals by their Newton polygons in \cite{Manin63}. 
\end{example}

\subsubsection{}\label{mu-ordinariness for isocrystals}
Let $\mu \in \cocharacters(T)$ be a dominant cocharacter. Then $\mu$ represents a unique conjugacy class of cocharacters of $G(\quotientwitt)$ which we denote by $\{\mu\}$. We identify $\mu$ with its image in $\cocharacters(T)/\weylgroup$, and define
\[ \bar{\mu} = \dfrac{1}{m} \sum_{i=0}^{m-1} \sigma^i(\mu) \in \newtonset(G)\]
where $m$ is some integer such that $\sigma^m(\mu) = \mu$. We also let $\mu^\natural \in \pi_1(G)_{\langle \sigma \rangle}$ be the image of $\mu$ under the natural projection $\cocharacters(T) \rightarrow \pi_1 (G)_{\langle \sigma \rangle} = (\cocharacters(T) / \langle \alpha^\vee : \alpha^\vee \in \Phi^\vee \rangle)_{\langle \sigma \rangle}$. The characterization of the Newton map in \cite{Kottwitz85}, 4.3 shows that $\bar{\mu}$ is the image of $[\mu(p)]$ under $\newtonmap_G$.  It also follows directly from the definition of $\kottwitzmap_G$ that $\mu^\natural$ is the image of $[\mu(p)]$ under $\kottwitzmap_G$.

Let us now define the set
\[ B(G, \{\mu\}) := \{ [b] \in  B(G) : \kottwitzmap_G([b]) = \mu^\natural, \newtonmap_G([b]) \preceq \bar{\mu} \}. \]
This set is known to be finite (see \cite{Rapoport-Richartz96}, 2.4.). It is also non-empty since we have $[\mu(p)] \in B(G, \{\mu\})$ by the discussion in the previous paragraph. 

Since the Newton map is injective on $B(G, \{\mu\})$ (see \ref{def of newton map and kottwitz map}), the partial order $\preceq$ on $\newtonset(G)$ induces a partial order on $B(G, \{\mu\})$. We will also use the symbol $\preceq$ to denote this induced partial order. Note that $[\mu(p)]$ is a unique maximal element in $B(G, \{\mu\})$ as the inequality $[b] \preceq [\mu(p)]$ clearly holds for all $[b] \in B(G, \{\mu\})$.

We refer to the $\sigma$-conjugacy class $[\mu(p)]$ as the \emph{$\mu$-ordinary} element of $B(G, \{\mu\})$. We say that an $F$-isocrystal over $k$ with $G$-structure is $\mu$-ordinary if it corresponds to $[\mu(p)]$ in the sense of \ref{classification of F-isocrystals with G-structure by B(G)}. Note that a $\sigma$-conjugacy class $[b] \in B(G, \{\mu\})$ is $\mu$-ordinary if and only if $\newtonmap_G([b]) = \bar{\mu}$.

\subsection{Unramified local Shimura data of Hodge type}\label{BT with tensors}$ $

In this subsection, we review the notion of unramified local Shimura data of Hodge type and describe $F$-crystals with additional structures that arise from such data.

\subsubsection{} \label{definition of local shimura datum of hodge type}

Assume that $k$ is algebraically closed. By an \emph{unramified (integral) local Shimura datum of Hodge type}, we mean a tuple $(G, [b], \{\mu\})$ where
\begin{itemize}
\item $G$ is an unramified connected reductive group over $\Q_p$;
\item $[b]$ is a $\sigma$-conjugacy class of $G(\quotientwitt)$;
\item $\{\mu\}$ is a $G(\witt)$-conjugacy class of cocharacters of $G$,
\end{itemize}
which satisfy the following two conditions:
\begin{enumerate}[label=(\roman*)]
\item\label{unram loc shimura datum condition 1} $[b] \in B(G, \{\mu\})$,
\item\label{unram loc shimura datum condition 2} there exists a faithful $G$-representation $\module \in \text{Rep}_{\Z_p}(G)$ (with its dual $\module^*$) such that, for all $b \in [b]$ and $\mu \in \{\mu\}$ satisfying $b \in G(\witt) \mu(p) G(\witt)$, we have a $\witt$-lattice 
\[\dieudonnemodule \simeq \module^* \otimes_{\Z_p} \witt \subset  N_b(\module^* \otimes_{\Z_p} \Q_p)\] 
with the property $p \dieudonnemodule \subset F \dieudonnemodule \subset \dieudonnemodule$. 
\end{enumerate}
Here $N_b: \text{Rep}_{\Q_p}(G) \rightarrow F\text{-Isoc}(k)$ is the functor defined in \ref{classification of F-isocrystals with G-structure by B(G)} which is uniquely determined by $[b]$. The set $G(\witt) \mu(p) G(\witt)$ is independent of the choice $\mu \in \{\mu\}$ as explained in \ref{def of unramified group}. The property $p \dieudonnemodule \subset F \dieudonnemodule \subset \dieudonnemodule$ means that $\dieudonnemodule$ is an $F$-crystal over $k$ (with a $\sigma$-linear endomorphism $F$). 
The requirement $b \in G(\witt) \mu(p) G(\witt)$ ensures that the Hodge filtration of $\dieudonnemodule$ is induced by $\sigma\inv(\mu)$. 

In practice when one tries to check that a given tuple $(G, [b], \{\mu\})$ is an unramified local Shimura datum, it is often more convenient to work with the following equivalent conditions of \ref{unram loc shimura datum condition 1} and \ref{unram loc shimura datum condition 2}:
\begin{enumerate}[label=(\roman*')]
\item\label{unram loc shimura datum condition 1'} $[b] \cap G(\witt) \mu(p) G(\witt)$ is not empty for some (and hence for all) $\mu \in \{\mu\}$, 
\item\label{unram loc shimura datum condition 2'} there exists a faithful $G$-representation $\module \in \text{Rep}_{\Z_p}(G)$ (with its dual $\module^*$) such that, for \emph{some} $b \in [b]$ and $\mu \in \{\mu\}$ satisfying $b \in G(\witt) \mu(p) G(\witt)$, we have a $\witt$-lattice 
\[\dieudonnemodule \simeq \module^* \otimes_{\Z_p} \witt \subset  N_b(\module^* \otimes_{\Z_p} \Q_p)\] 
with the property $p \dieudonnemodule \subset F \dieudonnemodule \subset \dieudonnemodule$. 
\end{enumerate}
The equivalence of \ref{unram loc shimura datum condition 1} and \ref{unram loc shimura datum condition 1'} is due to work of several authors, including Kottwitz-Rapoport \cite{Kottwitz-Rapoport03}, Lucarelli \cite{Lucarelli04} and Gashi \cite{Gashi10}. Note that \ref{unram loc shimura datum condition 1'} ensures that the condition \ref{unram loc shimura datum condition 2} is never vacuously satisfied. To see the equivalence of \ref{unram loc shimura datum condition 2} and \ref{unram loc shimura datum condition 2'}, one observes that existence of $\dieudonnemodule$ is equivalent to the condition that the linearization of $F$ has an integer matrix representation after taking some $\sigma$-conjugate, which depends only on $[b]$.

\begin{remark}
When $\{\mu\}$ is minuscule, an unramified local Shimura datum of Hodge type as defined above is a local Shimura datum as defined by Rapoport and Viehmann in \cite{Rapoport-Viehmann14}, Definition 5.1. In fact, since $G$ is split over $\witt$, we may view geometric conjugacy classes of cocharacters as $G(\witt)$-conjugacy classes of cocharacters. 
\end{remark}

Using the conditions  \ref{unram loc shimura datum condition 1'} and \ref{unram loc shimura datum condition 2'} one easily verifies the following functorial properties of unramified local Shimura data of Hodge type:

\begin{lemma} \label{functoriality of unram local shimura data of hodge type} 
Let $(G, [b], \{\mu\})$ be an unramified local Shimura datum of Hodge type. 
\begin{enumerate} 
\item\label{product functoriality of unram local shimura data of hodge type} If $(G', [b'], \{\mu'\})$ is another unramified local Shimura datum of Hodge type, the tuple $(G \times G', [b, b'], \{\mu, \mu'\})$ is also an unramified local Shimura datum of Hodge type.
\item\label{hom functoriality of unram local shimura data of hodge type} For any homomorphism $f: G \longrightarrow G'$ of unramified connected reductive group defined over $\Z_p$, the tuple $(G', [f(b)], \{f \circ \mu\})$ is an unramified local Shimura datum of Hodge type. 
\end{enumerate}
\end{lemma}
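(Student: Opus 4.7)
The plan is to verify the equivalent conditions (i') and (ii') of \ref{definition of local shimura datum of hodge type} for each new tuple, using the corresponding conditions for the given data.

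For part (1), I would begin by verifying condition (i'): applying (i') to each factor gives $b_0 \in [b] \cap G(\witt)\mu(p)G(\witt)$ and $b_0' \in [b'] \cap G'(\witt)\mu'(p)G'(\witt)$, and the pair $(b_0, b_0')$ then lies in $[(b,b')] \cap (G \times G')(\witt)(\mu,\mu')(p)(G \times G')(\witt)$. For condition (ii'), I would take the direct sum $\module \oplus \module'$ of the faithful $\Z_p$-integral representations witnessing (ii') for each factor. It is a faithful $G \times G'$-representation, and the Frobenius on $N_{(b_0, b_0')}((\module \oplus \module')^* \otimes \Q_p)$ acts block-diagonally, so the lattice $\dieudonnemodule \oplus \dieudonnemodule'$ inherits the inclusions $p(\dieudonnemodule \oplus \dieudonnemodule') \subset F(\dieudonnemodule \oplus \dieudonnemodule') \subset \dieudonnemodule \oplus \dieudonnemodule'$ summand by summand.

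For part (2), condition (i') is immediate: since $f$ is defined over $\Z_p$, it sends $G(\witt)$ into $G'(\witt)$ and satisfies $(f \circ \mu)(p) = f(\mu(p))$, so $b \in G(\witt)\mu(p)G(\witt)$ implies $f(b) \in G'(\witt)(f \circ \mu)(p)G'(\witt)$. For condition (ii'), one needs a faithful $\Z_p$-integral $G'$-representation $\module'$ for which the Frobenius $F = f(b)(1 \otimes \sigma)$ on $\module'^* \otimes \witt$ satisfies $p(\module'^* \otimes \witt) \subset F(\module'^* \otimes \witt) \subset \module'^* \otimes \witt$, equivalently that $f \circ \mu$ has weights in $\{0, 1\}$ on $\module'^*$. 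I would construct such a $\module'$ by starting from a faithful $\Z_p$-integral $G'$-representation and adjusting, if necessary, by a central-character twist to shift the weights of $f \circ \mu$ into $\{0, 1\}$.

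The main subtlety is condition (ii') for part (2), where one must produce a faithful $G'$-representation with the correct weight range for $f \circ \mu$ while maintaining $\Z_p$-integrality. Part (1) reduces to a routine block-diagonal verification once faithful representations for the two factors have been chosen.
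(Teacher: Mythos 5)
Part~(1) is fine: taking the product representation $\module\oplus\module'$ and the pair $(b_0,b_0')$ with each factor in the relevant double coset verifies conditions (i$'$) and (ii$'$) directly, exactly as you describe.

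In part~(2), condition (i$'$) is also fine, but your argument for (ii$'$) has a genuine gap. You correctly isolate the issue: one must exhibit a faithful $\Z_p$-representation $\module'$ of $G'$ on which $(f\circ\mu)$ acts with weights in $\{0,-1\}$ (equivalently, with weights in $\{0,1\}$ on $\module'^*$, so that $p\dieudonnemodule'\subset F\dieudonnemodule'\subset\dieudonnemodule'$). But a central-character twist $\module'\mapsto\module'\otimes\chi^{a}$ only translates the whole weight spectrum of $f\circ\mu$ by the constant $a\langle\chi,f\circ\mu\rangle$; it never shrinks its range. If the weights of $f\circ\mu$ on your chosen faithful $\module'$ already span more than two consecutive integers, no twist can bring them into $\{0,-1\}$. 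Moreover, if $\langle\chi,f\circ\mu\rangle=0$ for every character $\chi$ of $G'$ (for example if $f\circ\mu$ lands in $G'_{\mathrm{der}}$), the twist does nothing at all, and then the weights on any faithful representation sum to zero, forcing a strictly positive weight to occur. A concrete instance: $G=\GL_2$, $\mu=\operatorname{diag}(\lambda^{-1},1)$, $\module=\Z_p^2$ the standard representation (a legitimate Hodge-type datum), and $f:\GL_2\to\GL_3$, $g\mapsto g\oplus\det(g)^{-1}$. Then $f\circ\mu$ is conjugate to $\operatorname{diag}(\lambda,1,\lambda^{-1})$, which pairs to $0$ with $\det$ and whose weight range on any faithful $\GL_3$-representation is at least $2$, so no $\module'$ of the required kind exists. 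In other words, your proposed construction cannot succeed for an arbitrary homomorphism $f$; verifying~(ii$'$) for~(2) needs either a restriction on $f$ (the cases actually used in the paper are closed immersions into groups of the form $\prod\Res_{\integerring_j|\Z_p}\GL_{n_j}$ and projections $\EL{\levi}\twoheadrightarrow\EL{\levi}_j$, where the existence of $\module'$ is clear by restricting or projecting the given $\module$) or a more careful argument tailored to those situations.
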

%In particular, with a choice of a faithful $G$-representation $\module$ in the condition \ref{unram loc shimura datum condition 2} of \ref{definition of local shimura datum of hodge type}, we obtain 

\subsubsection{} \label{description of F-crystals with tensors}

For the rest of this section, we fix our unramified local Shimura datum of Hodge type $(G, [b], \{\mu\})$ and also a faithful $G$-representation $\module \in \text{Rep}_{\Z_p}(G)$ in the condition \ref{unram loc shimura datum condition 2} of \ref{definition of local shimura datum of hodge type}. By Lemma \ref{functoriality of unram local shimura data of hodge type}, we obtain a morphism of unramified local Shimura data of Hodge type
\[(G, [b], \{\mu\}) \longrightarrow (\GL(\module), [b]_{\GL(\module)}, \{\mu\}_{\GL(\module)}).\]

For a $\Z_p$-algebra $\genring$, we let $\module_\genring^\otimes$ denote the direct sum of all the $\genring$-modules which can be formed from $\module_\genring := \module \otimes_{\Z_p} \genring$ using the operations of taking duals, tensor products, symmetric powers and exterior powers. An element of $\module_\genring^\otimes$ is called a \emph{tensor} on $\module_\genring$. For the dual $\genring$-module $\module^*_\genring$ of $\module_\genring$, we can similarly define $(\module_\genring^*)^\otimes$ which has a natural identification $(\module_\genring^*)^\otimes = \module_\genring^\otimes$. An automorphism $f$ of $\module_\genring$ induces an automorphism $(f\inv)^*$ of $\module^*_\genring$ and thus an automorphism $f^\otimes$ of $\module_\genring^\otimes$. 

Let us now choose an element $b \in [b] \cap G(\witt) \mu(p) G(\witt)$ and take $\dieudonnemodule \simeq \module^* \otimes_{\Z_p} \witt$ as in the condition \ref{unram loc shimura datum condition 2} of \ref{definition of local shimura datum of hodge type}. A standard result by Kisin in \cite{Kisin10}, Proposition 1.3.2 gives a finite family of tensors $(s_i)_{i \in I}$ on $\module$ such that $G$ is the pointwise stabilizer of the $s_i$; i.e., for any $\Z_p$-algebra $\genring$ we have
\[ G(\genring) = \{ g \in \GL(\module_{\genring}) : g^\otimes((s_i)_{\genring}) = (s_i)_{\genring} \text{ for all } i \in I\}.\]
Hence $\dieudonnemodule \simeq \module^* \otimes_{\Z_p} \witt$ is equipped with tensors $(t_i) := (s_i \otimes 1)$, which are $F$-invariant since the linearization of $F$ on $\dieudonnemodule[1/p] = N_b(\module^* \otimes_{\Z_p} \Q_p)$ is given by an element $b \in G(\quotientwitt)$ in the conjugacy class $[b]$. We may regard the tensors $(t_i)$ as additional structures on $\dieudonnemodule$ induced by the group $G$. Following the terminology of \ref{F-isocrystals with G-structure}, we will often refer to these additional structures as \emph{$G$-structure}. We will write $\generalhodge{\dieudonnemodule}:=(\dieudonnemodule, (t_i))$ to indicate the $F$-crystal $\dieudonnemodule$ with $G$-structure.

For a $p$-divisible group $\BT'$ over a $\Z_p$-scheme $\genscheme$, we will write $\D(\BT')$ for its (contravariant) Dieudonn\'e module. When $\{\mu\}$ is minuscule, we have a unique $p$-divisible group $\BT$ over $k$ with $\D(\BT) = \dieudonnemodule$. In this case, we write $\generalhodge{\BT} :=(\BT, (t_i))$ to indicate the $p$-divisible group $\BT$ with $G$-structure.

\subsubsection{}\label{invariants of local shimura data}
For the datum $(G, [b], \{\mu\})$, we can define its Newton point and Kottwitz point by $\newtonmap_G([b])$ and $\kottwitzmap_G([b])$. Taking a unique dominant representative $\mu$ of $\{\mu\}$, we can also define $\bar{\mu}$ as in \ref{mu-ordinariness for isocrystals}, which we call the \emph{$\sigma$-invariant Hodge point} of $(G, [b], \{\mu\})$. We say that $(G, [b], \{\mu\})$ is $\mu$-ordinary if $[b]$ is $\mu$-ordinary.

For the $F$-crystal with $G$-structure $\generalhodge{\dieudonnemodule}$, we define its Newton point, Kottwitz point and $\sigma$-invariant Hodge point to be the corresponding invariants for $(G, [b], \{\mu\})$. We say that $\generalhodge{\dieudonnemodule}$ is ordinary if $(G, [b], \{\mu\})$ is ordinary. When $\{\mu\}$ is minuscule, these definitions obviously extend to the corresponding $p$-divisible group with $G$-structure $\generalhodge{\BT}$.

\begin{remark} We can further extend most of the notions defined in this section to the case when $k$ is not algebraically closed. For example, we may define an $F$-crystal over $k$ with $G$-structure as an $F$-crystal $\dieudonnemodule$ over $k$ equipped with tensors $(t_i)$ such that the pair $(\dieudonnemodule \otimes_{\witt(k)} \witt(\bar{k}), (t_i \otimes 1))$ is an $F$-crystal over $\bar{k}$ with $G$-structure as defined in \ref{description of F-crystals with tensors}. Then we have natural notions of the Newton point, Kottwitz point, $\sigma$-invariant Hodge point and $\mu$-ordinariness induced by the corresponding notions for $(\dieudonnemodule \otimes_{\witt(k)} \witt(\bar{k}), (t_i \otimes 1))$. This explains why we may safely focus our study on the case when $k$ is algebraically closed. \end{remark}

\begin{example} \label{BT with O-module structure} As a concrete example, let us consider the case $G = \ELgroupintegral$ where $\integerring$ is the ring of integers of some finite unramified extension of $\Q_p$.

Choosing a family of tensors $(s_i)$ on $\module$ whose pointwise stabilizer is $G$ amounts to choosing a $\Z_p$-basis of $\integerring$. Hence $\generalhodge{\dieudonnemodule}= (\dieudonnemodule, (t_i))$ can be identified with an $F$-crystal $\dieudonnemodule$ with an action of $\integerring$ (cf.  Example \ref{F-isocrystal examples}.(ii)). Following Moonen in \cite{Moonen04}, we will often say \emph{$\integerring$-module structure} in lieu of $G$-structure. 

We now take $\ELgalois := \Hom(\integerring, \witt(k))$ which is a cyclic group of order $m:= |E: \Q_p|$. For convenience, we will write $i + s := \sigma^s \circ i$ for any $i \in \ELgalois$ and $s \in \Z$. Then $\dieudonnemodule$, being a module over $\integerring \otimes_{\Z_p} \witt(k) = \prod_{i \in \ELgalois} \witt(k)$, decomposes into character spaces
\begin{equation}\label{char space decomp EL type} \dieudonnemodule = \bigoplus_{i \in \ELgalois} M_i \quad\quad \text{ where }  M_i = \{ x \in M : a \cdot x = i(a) x\}.\end{equation}
For each $i \in \ELgalois$, the Frobenius map $F$ restricts to a $\sigma$-linear map $F_i : \dieudonnemodule_i \to \dieudonnemodule_{i+1}$. Then the map $F^m$ restricts to a $\sigma^m$-linear endomorphism $\phi_i$ of $M_i$, thereby yielding a $\sigma^m$-$F$-crystal $(M_i, \phi_i)$ over $k$. By construction, $F_i$ induces an isogeny from $\sigma^*(M_i, \phi_i)$ to $(M_{i+1}, \phi_{i+1})$. This implies that the rank and the Newton polygon of $(M_i, \phi_i)$ is independent of $i \in \ELgalois$. We will write $d$ for the rank of $(M_i, \phi_i)$. 

The decomposition \eqref{char space decomp EL type} yields a decomposition
\[ M / FM = \bigoplus_{i \in \ELgalois} M_i/F_{i-1} M_i.\]
Define a function $\ELf : \ELgalois \to \Z$ by setting $\ELf(i)$ to be the rank of $M_i/F_{i-1} M_i$. We refer to the datum $(d, \ELf)$ as the \emph{type} of $\generalhodge{\dieudonnemodule}$.

Let us describe the Newton point in this setting. Using the identifications $G_\witt \cong \prod_{i \in \ELgalois} \GL(M_i)$ and $\cocharacters(T) \cong \Z^{md}$ we can write
\[ \cocharacters(T)_\Q / \weylgroup = \{ (x_1, \cdots, x_{md}) \in \Q^{md}:  x_{ds+1}  \leq \cdots \leq x_{d(s+1)} \text{ for } s=0, 1, \cdots, m-1 \}.\]
For $\mu = (x_1, \cdots, x_{md}) \in \cocharacters(T)_\Q / \weylgroup$ the action of $\sigma$ is given by $\sigma(\mu) = (y_1, \cdots, y_{md})$ where $y_t = x_{t+d}$. Therefore we obtain an identification
\begin{equation}\label{newton set for EL type} \newtonset(G) =  \{ (r_1, r_2, \cdots, r_d) \in \Q^d:  r_1 \leq r_2 \leq \cdots \leq r_d\}.\end{equation}
Under this identification, the Newton point $\newtonmap_G([b])$ of $\generalhodge{\dieudonnemodule}$ coincides with the Newton polygon of $(M_i, \phi_i)$ which was already seen to be independent of $i \in \ELgalois$. We will  refer to this polygon as the \emph{Newton polygon} of  $\generalhodge{\dieudonnemodule}$. The polygon $\newtonmap_G([b])$ is closely related with the Newton polygon of $\dieudonnemodule$ (without $\integerring$-module structure) as follows: a slope $\lambda$ appears in $\newtonmap_G([b])$ with multiplicity $\alpha$ if and only if it appears in the Newton polygon of $\dieudonnemodule$ with multiplicity $m\alpha$. 

We can also regard the $\sigma$-invariant Hodge point $\bar{\mu}$ as a polygon under the identification \eqref{newton set for EL type}. We will refer to this polygon as the \emph{$\sigma$-invariant Hodge polygon} of $\generalhodge{\dieudonnemodule}$. The inequality $\newtonmap_G([b]) \preceq \bar{\mu}$ serves as a generalized Mazur's inequality, which says that the Newton polygon $\newtonmap_G([b])$ lies above the $\sigma$-invariant Hodge polygon $\bar{\mu}$. $\generalhodge{\dieudonnemodule}$ is $\mu$-ordinary if and only if the two polygons coincide.  
%It is known that We will write $\EL{\dieudonnemodule}^\ord(d, \ELf)$ for this isomorphism class. 

When $\{\mu\}$ is minuscule, we also identify $\generalhodge{\BT} = (\BT, (t_i))$ with a $p$-divisible group $\BT$ with an action of $\integerring$. All of the discussions above evidently apply to $\generalhodge{\BT}$. Namely, we can define the type, the Newton polygon and the $\sigma$-invariant Hodge polygon of $\generalhodge{\BT}$. In addition, when $\{\mu\}$ is minuscule we have the following facts:
\begin{enumerate}
\item The $\sigma$-invariant Hodge polygon $\bar{\mu}$ of $\generalhodge{\BT}$ is determined by the type $(d, \ELf)$ as follows: if we write $\bar{\mu}=(a_1, a_2, \cdots, a_d)$, the slopes $a_j$ are given by
\[ a_j = \#\{ i \in \ELgalois : \ELf(i) > d-j\}\]
(see \cite{Moonen04}, 1.2.5.). 
\item There exists a unique isomorphism class of $\mu$-ordinary $p$-divisible groups with $\integerring$-module structure of a fixed type $(d, \ELf)$ (see \cite{Moonen04}, Theorem 1.3.7.). 
\end{enumerate}
%We also have a unique isomorphism class of $\mu$-ordinary $p$-divisible group with $\integerring$-structure of a given type $(d, \ELf)$, which we denote by $\EL{\BT}^\ord(d, \ELf)$. 

\begin{remark} As seen in \ref{notations on Weil restriction and Frobenius actions}, we have an embedding $G_\witt = \ELgroupintegral \otimes_{\Z_p} \witt \hookrightarrow \GL(M)$ where the image is identified with a product of $m$ copies of $\GL_n \otimes_{\Z_p}\witt$. The decomposition \eqref{char space decomp EL type} shows that these copies are given by $\GL(M_i)$. In particular, we have $n=d$. 
\end{remark}
\end{example}

\subsubsection{}\label{def of deligne-lusztig set}

The isomorphism class of $\generalhodge{\dieudonnemodule}=(\dieudonnemodule, (t_i))$ depends on the choice $b \in [b]$, even though $\dieudonnemodule[1/p] \simeq N_b(\module^* \otimes_{\Z_p} \Q_p)$ is independent of this choice. To see this, let $\generalhodge{\dieudonnemodule'} = (\dieudonnemodule', (t_i'))$ be the $F$-crystal over $k$ with $G$-structure that arises from another choice $b' = g b \sigma(g)\inv \in [b] \cap G(\witt) \mu(p) G(\witt)$ for some $g \in G(\quotientwitt)$. Then $g$ gives an isomorphism 
\[\dieudonnemodule[1/p] \simeq N_b(\module^* \otimes_{\Z_p} \Q_p) \stackrel{\sim}{\longrightarrow}  N_{b'}(\module^* \otimes_{\Z_p} \Q_p) \simeq \dieudonnemodule'[1/p],\] 
which also matches $(t_i)$ with $(t_i')$ since $g \in G(\quotientwitt)$. However, this isomorphism does not induce an isomorphism between $\dieudonnemodule$ and $\dieudonnemodule'$ unless $g \in G(\witt)$.

The above discussion motivates us to consider the set
\[ \DeligneLusztig{G}{ \mu}{b} := \{ g \in G(\quotientwitt)/ G(\witt) | g b \sigma(g)\inv \in G(\witt) \mu(p) G(\witt)\}. \]
This set is clearly independent of our choice of $b \in [b]$ up to bijection. It is also independent of the choice of $\mu \in \{\mu\}$ as we alread noted that the set $G(\witt) \mu(p) G(\witt)$ only depends on the conjugacy class of $\mu$. The set $\DeligneLusztig{G}{ \mu}{b}$ is called the \emph{affine Deligne-Lusztig set} associated to $(G, [b], \{\mu\})$.

\begin{prop}\label{moduli interpretation of deligne-lusztig sets} Fix an element $b \in [b]$, and let $\generalhodge{\dieudonnemodule} = (\dieudonnemodule, (t_i))$ denote the $F$-crystal with $G$-structure induced by $b$. Then the affine Deligne-Lusztig set $\DeligneLusztig{G}{\mu}{b}$ classifies isomorphism classes of tuples $(\dieudonnemodule', (t_i'), \genqisog)$ where
\begin{itemize}
\item $(\dieudonnemodule', (t_i'))$ is an $F$-crystal over $k$ with $G$-structure;
\item $\genqisog: \dieudonnemodule'[1/p] \stackrel{\sim}{\longrightarrow} \dieudonnemodule[1/p]$ is an isomorphism which matches $(t_i')$ with $(t_i)$.
\end{itemize}
When $\{\mu\}$ is minuscule, take $\BT$ to be the $p$-divisible group with $\D(\BT) = \dieudonnemodule$. Then the set $\DeligneLusztig{G}{\mu}{b}$ also classifies isomorphism classes of tuples $(\BT', (t_i'), \genqisog)$ where
\begin{itemize}
\item $(\BT', (t_i'))$ is a $p$-divisible group over $k$ with $G$-structure;
\item $\genqisog: \BT \rightarrow \BT'$ is a quasi-isogeny such that the induced isomorphism $\D(\BT')[1/p] \stackrel{\sim}{\longrightarrow} \D(\BT)[1/p]$ matches $(t_i')$ with $(t_i)$.
\end{itemize}
\end{prop}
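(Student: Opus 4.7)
The plan is to construct an explicit bijection between $\DeligneLusztig{G}{\mu}{b}$ and isomorphism classes of tuples $(\dieudonnemodule', (t_i'), \genqisog)$, and then transfer to $p$-divisible groups via contravariant Dieudonn\'e theory.

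For the forward map, given $g \in G(\quotientwitt)$ with $b_g := g b \sigma(g)\inv \in G(\witt) \mu(p) G(\witt)$, I assign the tuple with $\dieudonnemodule' := \module^* \otimes_{\Z_p} \witt$ carrying Frobenius $F' := b_g \circ (1 \otimes \sigma)$, tensors $(t_i') := (s_i \otimes 1)$, and quasi-isogeny $\genqisog$ given by left multiplication by $g\inv$ on $\module^* \otimes_{\Z_p} \quotientwitt$. The tensors are $F'$-invariant since $b_g \in G$ preserves $(s_i)$; the condition $b_g \in G(\witt)\mu(p)G(\witt)$ gives $p\dieudonnemodule' \subset F'\dieudonnemodule' \subset \dieudonnemodule'$ with Hodge type prescribed by $\{\mu\}$; and a direct computation using the identity $b = g\inv b_g \sigma(g)$ checks $\genqisog \circ F' = F \circ \genqisog$, while $g \in G$ forces $\genqisog^\otimes(t_i') = t_i$. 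Replacing $g$ by $hg$ with $h \in G(\witt)$ modifies $b_g$ to $h b_g \sigma(h)\inv$, and the resulting tuple is isomorphic to the original via left multiplication by $h\inv$, so the assignment descends to a well-defined map $\Phi$ on $\DeligneLusztig{G}{\mu}{b}$.

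Injectivity of $\Phi$ comes from unravelling: an isomorphism of tuples $\psi: \dieudonnemodule_1' \to \dieudonnemodule_2'$ satisfying $\genqisog_2 \circ \psi[1/p] = \genqisog_1$ must, under the fixed identifications $\dieudonnemodule_i' = \module^* \otimes_{\Z_p} \witt$, come from left multiplication by a $\witt$-linear automorphism preserving $(s_i)$, hence an element of $G(\witt)$ that witnesses the needed coset relation between $g_1$ and $g_2$. The main obstacle will be surjectivity. Given an abstract tuple $(\dieudonnemodule', (t_i'), \genqisog)$, I use $\genqisog$ to view $\dieudonnemodule'$ as a $\witt$-lattice inside $\dieudonnemodule[1/p] = \module^* \otimes_{\Z_p} \quotientwitt$ on which $(t_i')$ coincide with $(t_i)$, and I need a $\witt$-linear trivialization $\dieudonnemodule' \simeq \module^* \otimes_{\Z_p} \witt$ carrying $(t_i')$ to $(s_i \otimes 1)$. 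Existence of such a trivialization is equivalent to the triviality of the corresponding $G$-torsor over $\Spec \witt$, which holds because $\witt$ is strictly henselian and $G$ is reductive over $\Z_p$ (cf.\ \cite{Kisin10}, Proposition 1.3.4). Composing this trivialization with $\genqisog\inv$ then produces $g \in \GL(\module^* \otimes_{\Z_p} \quotientwitt)$ which preserves $(s_i)$, hence lies in $G(\quotientwitt)$, and the $F$-crystal structure on $\dieudonnemodule'$ translates precisely into $g b \sigma(g)\inv \in G(\witt) \mu(p) G(\witt)$.

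Finally, when $\{\mu\}$ is minuscule the $p$-divisible group version follows formally from contravariant Dieudonn\'e theory: the functor $\D$ induces an equivalence between $p$-divisible groups over $k$ and $F$-crystals of the appropriate minuscule Hodge type, under which quasi-isogenies $\genqisog: \BT \to \BT'$ correspond bijectively to $F$-equivariant isomorphisms $\D(\BT')[1/p] \stackrel{\sim}{\to} \D(\BT)[1/p]$ matching the tensors. The two moduli problems thereby become identified.
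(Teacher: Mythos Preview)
Your argument is correct and follows essentially the same bijection as the paper's proof. The only notable addition is your explicit invocation of torsor triviality over $\Spec \witt$ for surjectivity; the paper sidesteps this because in its conventions an $F$-crystal with $G$-structure already comes equipped with an identification $\dieudonnemodule' \simeq \module^* \otimes_{\Z_p} \witt$ carrying $(t_i')$ to $(s_i \otimes 1)$, so that step is tacit there.
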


\begin{proof}
The second part follows immediately from the first part using Dieudonn\'e theory, so we need only prove the first part. 

Let $g$ be a representative of $g G(\witt) \in \DeligneLusztig{G}{ \mu}{ b}$. Then as discussed in \ref{def of deligne-lusztig set}, the element $b' := g\inv b \sigma(g)$ gives rise to an $F$-crystal over $k$ with $G$-structure $(\dieudonnemodule', (t_i'))$ and an isomorphism $\genqisog: \dieudonnemodule'[1/p] \stackrel{\sim}{\longrightarrow} \dieudonnemodule[1/p]$ which matches $(t_i')$ with $(t_i)$. It is clear that the isomorphism class of $(\dieudonnemodule', (t_i'), \iota)$ does not depend on the choice of the representative $g$.

Conversely, let $(\dieudonnemodule', (t_i'), \iota)$ be a tuple as in the statement. Let $b' \in G(\quotientwitt)$ be the linearization of the Frobenius map on $\dieudonnemodule'[1/p]$. Then the isomorphism $\genqisog: \dieudonnemodule'[1/p] \stackrel{\sim}{\longrightarrow} \dieudonnemodule[1/p]$ determines an element $g \in G(\quotientwitt)$ such that $b' = g b \sigma(g)\inv$. Moreover, we have $b' \in G(\witt)\mu(p)G(\witt)$ since $(\dieudonnemodule', (t_i'))$ is an $F$-crystal over $k$ with $G$-structure. Changing $(\dieudonnemodule', (t_i'), \iota)$ to an isomorphic tuple will change $g$ to $gh$ for some $h \in G(\witt)$, so we get a well-defined element $gG(\witt) \in \DeligneLusztig{G}{ \mu}{ b}$. 

These associations are clearly inverse to each other, so we complete the proof. 
\end{proof}

We now describe some functorial properties of affine Deligne-Lusztig sets which are compatible with the functorial properties of unramified local Shimura data of Hodge type described in Lemma \ref{functoriality of unram local shimura data of hodge type}.

\begin{lemma} \label{functoriality of deligne-lusztig sets} 
Let $G'$ be an unramified connected reductive group over $\Q_p$. 
\begin{enumerate} 
\item\label{product functoriality of deligne-lusztig sets} If $(G', [b'], \{\mu'\})$ is an unramified local Shimura datum of Hodge type, we have an isomorphism
\[ \DeligneLusztig{G \times G'}{ \mu , \mu'}{ b, b'} \stackrel{\sim}{\longrightarrow}\DeligneLusztig{G}{ \mu}{ b} \times \DeligneLusztig{G'}{ \mu'}{ b'}\]
induced by the natural projections. 
\item\label{hom functoriality of deligne-lusztig sets} For any homomorphism $f: G \longrightarrow G'$ defined over $\Z_p$, we have a natural map 
\[\DeligneLusztig{G}{ \mu}{ b} \longrightarrow \DeligneLusztig{G'}{ f \circ \mu}{ f(b)}\]
induced by $gG(\witt) \mapsto f(g)G'(\witt)$, which is injective if $f$ is a closed immersion. 

\end{enumerate}
\end{lemma}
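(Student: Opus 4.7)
The plan is to derive both parts by a direct unwinding of the defining condition of $\DeligneLusztig{G}{\mu}{b}$, making systematic use of the functoriality of $W$-valued and $K_0$-valued points.

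For part~\ref{product functoriality of deligne-lusztig sets}, I would start from the tautological decomposition $(G \times G')(\genring) = G(\genring) \times G'(\genring)$ for $\genring = \witt, \quotientwitt$, which gives a canonical bijection $(G \times G')(\quotientwitt)/(G \times G')(\witt) \cong G(\quotientwitt)/G(\witt) \times G'(\quotientwitt)/G'(\witt)$. Under this identification the $\sigma$-conjugation splits componentwise as $(g,g')(b,b')\sigma((g,g'))\inv = (gb\sigma(g)\inv,\, g'b'\sigma(g')\inv)$, and the double coset factors as $(G \times G')(\witt)(\mu,\mu')(p)(G \times G')(\witt) = G(\witt)\mu(p)G(\witt) \times G'(\witt)\mu'(p)G'(\witt)$. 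So $(g,g')$ satisfies the affine Deligne--Lusztig condition on the left if and only if each component satisfies it on the right, proving the bijection.

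For part~\ref{hom functoriality of deligne-lusztig sets}, since $f$ is defined over $\Z_p$ it carries $G(\witt)$ into $G'(\witt)$, so the rule $gG(\witt) \mapsto f(g)G'(\witt)$ is well-defined on coset spaces; for the same reason $\sigma \circ f = f \circ \sigma$ on $G(\quotientwitt)$, hence $f(gb\sigma(g)\inv) = f(g) f(b) \sigma(f(g))\inv$, and whenever the former lies in $G(\witt)\mu(p)G(\witt)$ its image lies in $G'(\witt)(f \circ \mu)(p)G'(\witt)$, landing in $\DeligneLusztig{G'}{f \circ \mu}{f(b)}$ as required.

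The only non-tautological ingredient is the injectivity claim, for which I would use the following scheme-theoretic fact: if $f: G \hookrightarrow G'$ is a closed immersion of $\Z_p$-group schemes, then $G(\witt) = G(\quotientwitt) \cap G'(\witt)$ inside $G'(\quotientwitt)$. Indeed, a $\quotientwitt$-point of $G$ whose image in $G'$ extends to a $\witt$-point $h: \Spec \witt \to G'$ already factors through $G$, because $h\inv(G)$ is a closed subscheme of the integral scheme $\Spec \witt$ containing the generic point, hence equals $\Spec \witt$. Granted this, if $f(g_1)G'(\witt) = f(g_2)G'(\witt)$ then $f(g_1\inv g_2) \in G'(\witt)$ and $g_1\inv g_2 \in G(\quotientwitt)$, so $g_1\inv g_2 \in G(\witt)$, which gives $g_1 G(\witt) = g_2 G(\witt)$. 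Since the rest is a mechanical check, the entire substance of the lemma is this last point-set identity, so there is no serious obstacle.
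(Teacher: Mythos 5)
Your proof is correct and takes essentially the same route as the paper: parts (1) and the well-definedness in (2) are tautological unwindings, and the only content is the injectivity assertion, which both you and the author reduce to the equality $G(\witt) = G(\quotientwitt) \cap G'(\witt)$ inside $G'(\quotientwitt)$ (equivalently, injectivity of $G(\quotientwitt)/G(\witt) \to G'(\quotientwitt)/G'(\witt)$). The one cosmetic difference is that the paper first embeds $G'$ into some $\GL_n$ and then asserts this coset-space injectivity without proof, whereas you prove it directly for an arbitrary closed immersion $f: G \hookrightarrow G'$ via the scheme-theoretic observation that the inverse image of $G$ under a $\Spec\witt$-point of $G'$ is a closed subscheme of $\Spec\witt$ containing the generic point and is therefore all of $\Spec\witt$ (using that $\witt$ is a domain). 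Your version is cleaner in that it dispenses with the reduction to $\GL_n$ and supplies the justification the paper leaves implicit; the content is the same.
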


\begin{proof}
The only possibly non-trivial assertion is the injectivity of the natural map $\DeligneLusztig{G}{ \mu}{ b} \longrightarrow \DeligneLusztig{G'}{ f \circ \mu}{ f(b)}$ in \eqref{hom functoriality of deligne-lusztig sets} when $f$ is a closed immersion. To see this, one may assume that $G'=\GL_n$ by embedding $G'$ into some $\GL_n$. Then the assertion follows after observing that the map
\[G(\quotientwitt)/G(\witt) \longrightarrow GL_n(\quotientwitt)/\GL_n(\witt)\]
is injective. 
\end{proof}

\subsection{Deformation Spaces of $p$-divisible groups with Tate tensors}\label{construction of universal deformation}$ $

In this subsection, we review Faltings's construction of a ``universal" deformation of $p$-divisible groups with Tate tensors, given in \cite{Faltings99}, \S7. We refer readers to \cite{Moonen98}, \S4 for a more detailed discussion of these results. 

%For this section, we do not assume that $k$ is algebraically closed. 

\subsubsection{} \label{filtered dieudonne modules}

Let $\genring$ be a formally smooth $\witt$-algebra of the form $\genring = \witt[[u_1, \cdots, u_N]]$ or $\genring = \witt[[u_1, \cdots, u_N]]/(p^m)$. We can define a lift of Frobenius map on $\genring$, which we also denote by $\sigma$, by setting $\sigma(u_i) = u_i^p$. 

We define a \emph{filtered crystalline Dieudonn\'e module} over $\genring$ to be a $4$-tuple $(\deform{\dieudonnemodule}, \Fil^1(\deform{\dieudonnemodule}), \nabla, F)$ where
\begin{itemize}
\item $\deform{\dieudonnemodule}$ is a free $\genring$-module of finite rank;
\item $\Fil^1(\deform{\dieudonnemodule}) \subset \deform{\dieudonnemodule}$ is a direct summand;
\item $\nilpconnection: \deform{\dieudonnemodule} \to \deform{\dieudonnemodule} \otimes \completion{\Omega}_{\genring / \witt}$ is an integrable, topologically quasi-nilpotent connection;
\item $F_\deform{\dieudonnemodule}: \deform{\dieudonnemodule} \to \deform{\dieudonnemodule}$ is a $\sigma$-linear endomorphism,
\end{itemize} 
which satisfy the following conditions:
\begin{enumerate}[label=(\roman*)]
\item $F_\deform{\dieudonnemodule}$ induces an isomorphism $ \big(\deform{\dieudonnemodule} + p\inv \Fil^1(\deform{\dieudonnemodule})\big) \otimes_{\genring, \sigma} \genring \stackrel{\sim}{\longrightarrow} \deform{\dieudonnemodule}$, and
\item $\Fil^1(\deform{\dieudonnemodule}) \otimes_\genring (\genring/p) = \Ker \big(F \otimes \sigma_{\genring/p}: \deform{\dieudonnemodule} \otimes_\genring (\genring/p) \to  \deform{\dieudonnemodule} \otimes_\genring (\genring/p)\big)$.
\end{enumerate}
Combining the work of de Jong in \cite{deJong95} and Grothendieck-Messing theory, we obtain an equivalence between the category of filtered crystalline Dieudonn\'e modules over $\genring$ and the (opposite) category of $p$-divisible groups over $\genring$ (see also \cite{Moonen98}, 4.1.). 

%\begin{prop}[\cite{Moonen98}, 4.1.]\label{crystallin dieudonne equivalence} There exists an anti-equivalence between the category of $p$-divisible groups over $\genring$ and the category of filtered Dieudonn\'e modules over $\genring$. 
%\end{prop}

\subsubsection{} \label{construction of universal dieudonne crystal}

Let $\BT$ be a $p$-divisible group over $k$. We write $\deformrings$ for the category of artinian local $\witt$-algebra with residue field $k$. By a \emph{deformation} or \emph{lifting} of $\BT$ over $\genring \in  \deformrings$, we mean a $p$-divisible group $\deform{\BT}$ over $\genring$ with an isomorphism $\alpha: \deform{\BT} \otimes_\genring k \cong \BT$. We define a functor $\Def_\BT: \deformrings \to \textbf{Sets}$ by setting $\Def_\BT(\genring)$ to be the set of isomorphism classes of deformations of $\BT$ over $\genring$. 

We write $\dieudonnemodule := \D(\BT)$ with the Frobenius map $F$, and let $\Fil^1(\dieudonnemodule) \subset \dieudonnemodule$ be its Hodge filtration. We choose a cocharacter $\mu: \Gm \to \GL_W(\dieudonnemodule)$ such that $\sigma\inv(\mu)$ induces this filtration; for instance, we take $\mu$ to be the dominant cocharacter that represents the Hodge polygon of $\BT$ under the identification of the Newton set $\newtonset(\GL_n)$ in Example \ref{newton map for GLn}. The stabilizer of the complement of $\Fil^1(\dieudonnemodule)$ is a parabolic subgroup. We let $\unipotent^\mu$ be its unipotent radical, and take the formal completion $\completion{\unipotent}^\mu = \Spf \genring_\GL^\mu$ of $\unipotent^\mu$ at the identity section. Then $\genring_\GL^\mu$ is a formal power series ring over $\witt$, so we can define a lift of Frobenius map on $\genring_\GL^\mu$.

\begin{prop}[\cite{Faltings99}, \S7] \label{universal deformation of a p-divisible group} Let $u_t \in \completion{\unipotent}^\mu(\genring_\GL^\mu)$ be the tautological point. Define
\[ \deform{\dieudonnemodule} : = \dieudonnemodule \otimes_\witt \genring_\GL^\mu, \quad \Fil^1(\deform{\dieudonnemodule}):= \Fil^1(\dieudonnemodule) \otimes_\witt \genring_\GL^\mu, \quad F_\deform{\dieudonnemodule} := u_t \circ (F \otimes_\witt \sigma).\]
\begin{enumerate}
\item\label{existence of horizontal connection} There exists a unique topologically quasi-nilpotent connection $\nilpconnection :\deform{\dieudonnemodule} \to \deform{\dieudonnemodule} \otimes \completion{\Omega}_{\genring_\GL^\mu / \witt}$ that commutes with $F_\deform{\dieudonnemodule}$, and this connection is integrable.
\item\label{description of universal deformation in terms of filtered dieudonne module} If $p >2$, the filtered crystalline Dieudonn\'e module $(\deform{\dieudonnemodule},  \Fil^1(\deform{\dieudonnemodule}), \nilpconnection, F_\deform{\dieudonnemodule})$ corresponds to the universal deformation of $\BT$ via the equivalence described in \ref{filtered dieudonne modules}.
\end{enumerate}
\end{prop}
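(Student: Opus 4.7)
The plan is to handle the two parts in sequence: part (1) constructs the connection by a Frobenius-based iteration, and part (2) uses a tangent space comparison to identify the resulting deformation with the universal one.

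For part (1), I would write any candidate connection as $\nilpconnection = \nilpconnection_0 + \omega$, where $\nilpconnection_0 := 1 \otimes d$ is the trivial connection on $\deform{\dieudonnemodule} = \dieudonnemodule \otimes_\witt \genring_\GL^\mu$ coming from the $\witt$-basis of $\dieudonnemodule$, and $\omega$ is a section of $\End(\dieudonnemodule) \otimes \completion{\Omega}_{\genring_\GL^\mu/\witt}$. The requirement that $\nilpconnection$ commute with $F_\deform{\dieudonnemodule}$ translates into a linear functional equation for $\omega$ in which a factor of $d\sigma$ appears on the right-hand side; since the chosen Frobenius lift satisfies $\sigma(u_i) = u_i^p$, the differential $d\sigma$ is divisible by $p$, and the relevant operator is therefore contracting in the $p$-adic topology. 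Iterating yields a unique $p$-adically convergent $\omega$; topological quasi-nilpotence can be read off from its explicit form, since each iterate increases the order of vanishing at the closed point. For integrability, observe that the curvature $\nilpconnection^2$ also commutes with $F_\deform{\dieudonnemodule}$ in the same sense, and uniqueness applied to the zero endomorphism forces $\nilpconnection^2 = 0$.

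For part (2), apply the equivalence of \ref{filtered dieudonne modules} to $(\deform{\dieudonnemodule}, \Fil^1(\deform{\dieudonnemodule}), \nilpconnection, F_\deform{\dieudonnemodule})$ to obtain a $p$-divisible group $\deform{\BT}$ over $\genring_\GL^\mu$ lifting $\BT$; this defines a classifying morphism $\Spf \genring_\GL^\mu \to \Def_\BT$ which must be shown to be an isomorphism. Both sides are formally smooth over $\witt$: $\Spf \genring_\GL^\mu$ is a formal affine space of dimension $\dim \unipotent^\mu$, and $\Def_\BT$ is pro-represented by a power series ring of dimension equal to its tangent space dimension, which by Grothendieck-Messing theory is $\dim_k \Hom_k\bigl(\Fil^1(\dieudonnemodule)/p,\, (\dieudonnemodule/\Fil^1(\dieudonnemodule))/p\bigr)$. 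These dimensions agree by the definition of $\unipotent^\mu$, so it suffices to check the map is an isomorphism on tangent spaces at the closed point; unwinding the construction, a tangent vector in $\Lie(\unipotent^\mu)$ corresponds via $\sigma\inv(\mu)$ precisely to a lift of the Hodge filtration over $k[\epsilon]/\epsilon^2$, which by Grothendieck-Messing is exactly the first-order deformation datum classified by $\Def_\BT$.

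The main obstacle is the integral $p$-adic convergence argument in part (1): the iterative solution must lie in $\deform{\dieudonnemodule} \otimes \completion{\Omega}_{\genring_\GL^\mu/\witt}$ rather than only after inverting $p$. This depends essentially on the choice of Frobenius lift $\sigma(u_i) = u_i^p$, which forces $d\sigma$ to be divisible by $p$; a generic Frobenius lift would only yield rational convergence, which would be insufficient to apply the equivalence of \ref{filtered dieudonne modules} in part (2).
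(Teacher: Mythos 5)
The paper does not give its own proof of this proposition; it is stated as a citation to \cite{Faltings99}, \S7, with a pointer to \cite{Moonen98}, \S4 for a more detailed exposition. So there is no in-paper argument to compare against directly; I can only assess your proposal against the standard (Faltings/Moonen) argument.

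Your plan for part (1) is the correct one: decompose $\nilpconnection = (1 \otimes d) + \omega$, turn horizontality of $F_{\deform{\dieudonnemodule}}$ into a fixed-point equation for $\omega$, and observe that because $\sigma(u_i)=u_i^p$ the induced map on differentials is divisible by $p$, so the relevant operator is $p$-adically contracting and the fixed point is unique and integral. Your integrability remark, however, is phrased loosely: $\nilpconnection^2$ is not a connection, so ``uniqueness of the connection'' does not literally apply to it. What you actually need (and what works) is that the curvature $R=\nilpconnection^2$ satisfies the homogeneous relation $R\circ F_{\deform{\dieudonnemodule}} = F_{\deform{\dieudonnemodule}}\circ \sigma^*(R)$, where $\sigma^*(R)$ is divisible by $p^2$ because two factors of $d\sigma$ appear. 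Combined with $p\,\deform{\dieudonnemodule}\subset F_{\deform{\dieudonnemodule}}\deform{\dieudonnemodule}$ (which follows from the axioms of a filtered crystalline Dieudonn\'e module), an iteration shows $R$ is infinitely $p$-divisible in a $p$-torsion-free module, hence zero. You should make that step explicit rather than appeal to ``uniqueness.''

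For part (2), your route — use the equivalence of \ref{filtered dieudonne modules} to produce a deformation $\deform{\BT}$, then compare $\Spf \genring_\GL^\mu$ with $\Def_\BT$ via formal smoothness plus a tangent-space computation — is correct and gives the result. It is, however, a slightly different packaging than Faltings's: the standard argument identifies $\completion{\unipotent}^\mu$ directly with the moduli of lifts of the Hodge filtration (using that $\unipotent^\mu$ acts simply transitively on complements of $\Fil^1(\dieudonnemodule)$) and then invokes Grothendieck--Messing to verify the universal property over all artinian test rings, rather than reducing to the closed point. The two routes buy essentially the same thing; the direct universal-property check avoids separately invoking pro-representability and formal smoothness of $\Def_\BT$, while your version is shorter once those facts are granted. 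One small point to be explicit about: you should note that $u_t$ specializes to the identity at the closed point, so that $(\deform{\dieudonnemodule},\Fil^1,\nilpconnection,F_{\deform{\dieudonnemodule}})$ restricts to the data of $\BT$ over $k$, which is what makes $\deform{\BT}$ a deformation of $\BT$ at all and hence gives the classifying map. With the integrability step tightened, the proposal is a correct proof.
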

In particular, \eqref{description of universal deformation in terms of filtered dieudonne module} implies that we have an identification $\Def_\BT \cong \Spf \genring_\GL^\mu$. We will write $\deform{\BT}_\GL^\mu$ for the universal deformation of $\BT$.

\subsubsection{} We now consider deformations of $p$-divisible groups with $G$-structure. We fix an unramified local Shimura datum of Hodge type $(G, [b], \{\mu\})$ with minuscule $\{\mu\}$. We also fix a faithful $G$-representation $\module \in \text{Rep}_{\Z_p}(G)$ in the condition \ref{unram loc shimura datum condition 2} of \ref{definition of local shimura datum of hodge type}, and choose $b \in [b]$ and $\mu \in \{\mu\}$ such that $b \in G(\witt)\mu(p)G(\witt)$. Then we obtain an $F$-crystal with $G$-structure $\generalhodge{\dieudonnemodule}=(\dieudonnemodule, (t_i))$ as explained in \ref{description of F-crystals with tensors}, which gives rise to a $p$-divisible group with $G$-structure $\generalhodge{\BT} = (\BT, (t_i))$ since $\{\mu\}$ is minuscule. The condition $b \in G(\witt)\mu(p)G(\witt)$ assures that the Hodge filtration $\Fil^1(\dieudonnemodule) \subset \dieudonnemodule$ is induced by $\sigma\inv(\mu)$, so all the constructions from \ref{construction of universal dieudonne crystal} and Proposition \ref{universal deformation of a p-divisible group} are valid for $\BT$.

Let $\unipotent_G^\mu := \unipotent^\mu \cap G_\witt$, which is a smooth unipotent subgroup of $G_W$. Take $\completion{\unipotent}_G^\mu = \Spf \genring_G^\mu$ to be its formal completion at the identity section. Then $\genring_G^\mu$ is a formal power series ring over $\witt$, so we get a lift of Frobenius map to $\genring_G^\mu$. Alternatively, we get this lift from the lift on $\genring_\GL^\mu$ via the surjection $\genring_\GL^\mu \twoheadrightarrow \genring_G^\mu$ induced by the embedding $\completion{\unipotent}_G^\mu \hooklongrightarrow \completion{\unipotent}^\mu$.

Let $u_{t, G} \in \completion{\unipotent}_G^\mu(\genring_G^\mu)$ be the tautological point. Define
\[ \deform{\dieudonnemodule}_G : = \dieudonnemodule \otimes_\witt \genring_G^\mu, \quad \Fil^1(\deform{\dieudonnemodule}_G):= \Fil^1(\dieudonnemodule) \otimes_\witt \genring_G^\mu, \quad F_{\deform{\dieudonnemodule}_G} := u_{t, G} \circ (F \otimes_\witt \sigma).\]
Then we have an integrable, topologically quasi-nilpotent connection $\nilpconnection_G :\deform{\dieudonnemodule}_G \to \deform{\dieudonnemodule}_G \otimes \completion{\Omega}_{\genring_G^\mu / \witt}$ induced by $\nilpconnection :\deform{\dieudonnemodule} \to \deform{\dieudonnemodule} \otimes \completion{\Omega}_{\genring_\GL^\mu / \witt}$ from Proposition \ref{universal deformation of a p-divisible group}. In addition, $\nilpconnection_G$ clearly commutes with $F_{\deform{\dieudonnemodule}_G}$ by construction. Hence we have a filtered crystalline Dieudonn\'e module $(\deform{\dieudonnemodule}_G,  \Fil^1(\deform{\dieudonnemodule}_G), \nilpconnection_G, F_{\deform{\dieudonnemodule}_G})$. 

Note that $ \deform{\dieudonnemodule}_G$ is equipped with tensors $(\liftlow{t}^\univ_i) := ( t_i \otimes 1)$, which are evidently $F_{\deform{\dieudonnemodule}_G}$-invariant by construction. If $p>2$, one can prove that these tensors lie in the 0th filtration (see \cite{Kim13}, Lemma 2.2.7 and Proposition 2.5.9.). 

Let $\deform{\BT}_G^\mu$ be the $p$-divisible group over $\genring_G^\mu$ corresponding to $(\deform{\dieudonnemodule}_G,  \Fil^1(\deform{\dieudonnemodule}_G), \nabla_G, F_{\deform{\dieudonnemodule}_G})$ via the equivalence described in \ref{filtered dieudonne modules}. Alternatively, one can get $\deform{\BT}_G^\mu$ by simply pulling back $\deform{\BT}_\GL^\mu$ over $\genring_G^\mu$. Then $\deform{\BT}_G^\mu$ is the ``universal deformation" of $(\BT, (t_i))$ in the following sense:

\begin{prop}[\cite{Faltings99}, \S7] \label{universal deformation of a p-divisible group with tate tensors} Assume that $p>2$. Let $\genring$ be a formally smooth $\witt$-algebra of the form $\genring = \witt[[u_1, \cdots, u_N]]$ or $\genring = \witt[[u_1, \cdots, u_N]]/(p^m)$. Choose a deformation $\deform{\BT}$ of $\BT$ over $\genring$, and let $f: \genring_\GL^\mu \to \genring$ be the morphism induced by $\deform{\BT}$ via $\Spf \genring_\GL^\mu \cong \Def_\BT$. Then $f$ factors through $\genring_G^\mu$ if and only if the tensors $(t_i)$ can be lifted to tensors $(\liftlow{t}_i) \in \D(\deform{\BT})^\otimes$ which are Frobenius-invariant and lie in the $0$th filtration with respect to the Hodge filtration. If this holds, then we necessarily have $(f^* \liftlow{t}^\univ_i) = (\liftlow{t}_i)$. 
\end{prop}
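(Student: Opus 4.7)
The plan is to prove the two implications separately, with the main content being the direction ``lifted tensors exist $\Rightarrow$ $f$ factors through $\genring_G^\mu$''; the reverse is a formality.

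For the easy direction, suppose $f$ factors as $\genring_\GL^\mu \twoheadrightarrow \genring_G^\mu \xrightarrow{\bar{f}} \genring$. Then $\deform{\BT} \cong \bar{f}^* \deform{\BT}_G^\mu$, so pulling back the universal tensors yields $(\liftlow{t}_i) := (f^* \liftlow{t}^\univ_i)$ on $\D(\deform{\BT})^\otimes$, which are Frobenius-invariant and lie in the $0$th filtration by the construction recalled just before the proposition.

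For the main direction, suppose Frobenius-invariant lifts $(\liftlow{t}_i) \in \D(\deform{\BT})^\otimes$ in the $0$th filtration are given. Since $\deform{\BT} = f^* \deform{\BT}_\GL^\mu$, one has an identification $\D(\deform{\BT}) \cong \dieudonnemodule \otimes_\witt \genring$ under which the Frobenius is $u_t(f) \circ (F \otimes \sigma)$, where $u_t(f) \in \completion{\unipotent}^\mu(\genring)$ is the image of the tautological point under $f$. A factorization of $f$ through $\genring_G^\mu$ is equivalent to $u_t(f) \in G(\genring)$, i.e.\ to $u_t(f)^\otimes$ preserving $(t_i \otimes 1)$ for every $i$. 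The crux is thus to identify $(\liftlow{t}_i)$ with $(t_i \otimes 1)$ under the above identification.

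This identification rests on a crystalline rigidity statement: any Frobenius-invariant, horizontal tensor in the $0$th filtration that lifts $(t_i)$ is unique. I would prove this by writing $(\liftlow{t}_i) - (t_i \otimes 1) \in \dieudonnemodule^\otimes \otimes_\witt \mathfrak{a}$, where $\mathfrak{a}$ is the augmentation ideal of $\genring$ over $\witt$, and combining Frobenius-invariance with $\sigma(\mathfrak{a}) \subseteq \mathfrak{a}^p$ and the $0$th-filtration condition to iteratively force the difference into arbitrarily high powers of $\mathfrak{a}$. This rigidity is the principal obstacle, and fortunately is precisely the content of \cite{Kim13}, Lemma 2.2.7 and Proposition 2.5.9, where the hypothesis $p>2$ enters to control the divided-power subtleties from Grothendieck--Messing theory. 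Granting the rigidity, the conclusion is immediate: Frobenius-invariance of $(\liftlow{t}_i) = (t_i \otimes 1)$ reads $(u_t(f)^\otimes \circ (F \otimes \sigma))(t_i \otimes 1) = t_i \otimes 1$, and since $F(t_i) = t_i$ (because $t_i = s_i \otimes 1$ with $s_i$ defined over $\Z_p$ and $b \in G(\quotientwitt)$ stabilizing $s_i$), we obtain $u_t(f)^\otimes(t_i \otimes 1) = t_i \otimes 1$ for all $i$. Hence $u_t(f) \in G(\genring) \cap \completion{\unipotent}^\mu(\genring) = \completion{\unipotent}_G^\mu(\genring)$, so $f$ factors through $\genring_G^\mu$. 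The supplementary assertion $(f^* \liftlow{t}^\univ_i) = (\liftlow{t}_i)$ then follows from the same rigidity applied to both sides.
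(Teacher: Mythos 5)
The paper does not give its own proof of this proposition; it cites Faltings \S 7 (with preparatory references to Kim's Lemma 2.2.7 and Proposition 2.5.9), so what you have reconstructed is exactly the standard argument that the citation points to. Your two-step skeleton is right: reduce to showing $u_t(f) \in G(\genring)$, and get there by first identifying $(\liftlow{t}_i)$ with $(t_i \otimes 1)$ via a contraction in the $\mathfrak{a}$-adic topology. One small caution on how you phrase the key step: calling it ``uniqueness'' of Frobenius-invariant lifts is slightly misleading, since $t_i \otimes 1$ is \emph{not} a priori Frobenius-invariant (that is equivalent to the conclusion $u_t(f) \in G(\genring)$ you are after). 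What the contraction really produces is the stronger identification $\liftlow{t}_i = t_i \otimes 1$: writing $\delta_i = \liftlow{t}_i - t_i \otimes 1$ and $\epsilon_i = (u_t(f)^\otimes - 1)(t_i \otimes 1)$, Frobenius-invariance gives $\delta_i = \epsilon_i + F(\delta_i)$; using that $t_i$ sits in weight $0$ of the $\mu$-grading while $u_t(f) - 1$ strictly lowers weight and $\delta_i$ sits in $\Fil^0$, one separates by degree to get $\delta_i = (F(\delta_i))_{\geq 0}$, and then $\sigma(\mathfrak{a}) \subseteq \mathfrak{a}^p$ together with integrality of $F$ on $\Fil^0$ gives $\delta_i \in \bigcap_n \mathfrak{a}^n = 0$, whence also $\epsilon_i = 0$. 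With that clarification your argument is complete and matches the intended one; your identification of where $p>2$ enters (nilpotence of the divided powers in Grothendieck--Messing) is also correct.
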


We define $\Def_{\BT, G}$ to be the image of the closed immersion $\Spf \genring_G^\mu \hooklongrightarrow \Spf \genring_\GL^\mu \cong \Def_\BT$. Then $\Def_{\BT, G}$ classifies deformations of $(\BT, (t_i))$ over formal power series rings over $\witt$ or $\witt/(p^m)$ in the sense of Proposition \ref{universal deformation of a p-divisible group with tate tensors}. Note that our definition of $\Def_{\BT, G}$ is independent of the choice of $(t_i)$ and $\mu \in \{\mu\}$; indeed, the independence of the choice of $(t_i)$ is clear by construction, and the independence of the choice of $\mu$ follows from the universal property.

We close this section with some functorial properties of deformation spaces, which are compatible with the functorial properties of unramified local Shimura data of Hodge type described in Lemma \ref{functoriality of unram local shimura data of hodge type}. The proof is straightforward and thus omitted.

\begin{lemma} \label{functoriality of deformation spaces}
Let $(G', [b'], \{\mu'\})$ be another unramified local Shimura datum of Hodge type. Choose $b' \in [b']$ and $\mu' \in \{\mu'\}$ such that $b' \in G'(\witt) \mu'(p) G'(\witt)$, and let $(\BT', (t_i'))$ be a $p$-divisible group with $G'$-structure that arises from this choice. 
\begin{enumerate}
\item\label{product functoriality of deformation spaces} The natural morphism $\Def_\BT \times \Def_{\BT'} \longrightarrow \Def_{\BT \times \BT'}$, defined by taking the product of deformations, induces an isomorphism
\[ \Def_{\BT, G} \times \Def_{\BT', G'} \stackrel{\sim}{\longrightarrow} \Def_{\BT \times \BT', G \times G'}.\]
\item\label{hom functoriality of deformation spaces} For any homomorphism $f : G \to G'$ defined over $\Z_p$ such that $f(b) = b'$, we have a natural morphism 
\[\Def_{\BT, G} \to \Def_{\BT', G'}\]
induced by the map $\completion{\unipotent}_G^\mu \to \completion{\unipotent}_{G'}^{f \circ \mu}$. 
\end{enumerate}
\end{lemma}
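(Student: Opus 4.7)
The plan for both parts is to trace through the construction of $\genring_G^\mu$ in \ref{construction of universal deformation} (and the analogous one for $G'$) and verify the compatibilities at the level of unipotent radicals, then invoke Proposition \ref{universal deformation of a p-divisible group with tate tensors} to transfer the statement to deformation functors. Throughout I will use the reductive model structure on $G, G'$ to speak of the Frobenius-friendly $\unipotent^\mu_G, \unipotent^{\mu'}_{G'}$ as smooth subgroups of $\GL(\module)_\witt, \GL(\module')_\witt$.

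For part \eqref{product functoriality of deformation spaces}, I would take $(\mu, \mu') : \Gm \to (G \times G')_\witt$ as the cocharacter representing the product and observe that the parabolic stabilizing the Hodge filtration on $\dieudonnemodule \oplus \dieudonnemodule'$ is the product of the two parabolics, so its unipotent radical factors as
\[ \unipotent_{G \times G'}^{(\mu, \mu')} \;=\; \unipotent_G^\mu \times \unipotent_{G'}^{\mu'}. \]
Completing at the identity gives $\genring_{G \times G'}^{(\mu, \mu')} \cong \genring_G^\mu \,\widehat{\otimes}_\witt\, \genring_{G'}^{\mu'}$. To check this algebraic identification really corresponds to the natural product-of-deformations morphism $\Def_\BT \times \Def_{\BT'} \to \Def_{\BT \times \BT'}$, I would note that the Faltings datum $(\deform{\dieudonnemodule}_{G \times G'}, \Fil^1, \nabla_{G \times G'}, F)$ is, by construction, the direct sum of the data for $\BT$ and $\BT'$ base-changed to $\genring_G^\mu \widehat{\otimes} \genring_{G'}^{\mu'}$, and the Frobenius-invariant tensors $(\liftlow{t}_i^\univ, \liftlow{t}_i'^\univ)$ match up. By the equivalence in \ref{filtered dieudonne modules}, the corresponding $p$-divisible group is $\deform{\BT}_G^\mu \times \deform{\BT'}_{G'}^{\mu'}$ with the expected $(G \times G')$-structure, giving the desired isomorphism.

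For part \eqref{hom functoriality of deformation spaces}, the homomorphism $f : G \to G'$ defined over $\Z_p$ sends $b$ to $b'$ and $\mu$ to $f \circ \mu$, which is conjugate to $\mu'$ in $G'(\witt)$ by hypothesis. Consequently $f$ maps the parabolic of $G_\witt$ stabilizing the filtration induced by $\sigma\inv(\mu)$ into the analogous parabolic of $G'_\witt$, and hence restricts to a morphism of unipotent radicals $\unipotent_G^\mu \to \unipotent_{G'}^{f \circ \mu}$. Formal completion at the identity provides $\completion{\unipotent}_G^\mu \to \completion{\unipotent}_{G'}^{f \circ \mu}$, hence the asserted map $\Def_{\BT, G} \to \Def_{\BT', G'}$. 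Since $\Def_{\BT', G'}$ is independent of the choice of $\mu' \in \{\mu'\}$ by the discussion after Proposition \ref{universal deformation of a p-divisible group with tate tensors}, we may identify the target with the one built from $f \circ \mu$ and forget the extra choice.

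The only thing requiring care is moduli-theoretic compatibility in \eqref{hom functoriality of deformation spaces}: one needs to check that the map on formal schemes constructed from $\completion{\unipotent}_G^\mu \to \completion{\unipotent}_{G'}^{f\circ \mu}$ coincides, on functor of points, with the natural transformation sending a deformation $\generalhodge{\deform{\BT}}$ of $(\BT, (t_i))$ to the deformation of $(\BT', (t_i'))$ obtained by transport of $G$-structure along $f$. This is a routine unwinding: the universal tautological point $u_{t,G}$ pushes forward to $f(u_{t,G}) = u_{t, G'} \circ f$, which by the very construction of $F_{\deform{\dieudonnemodule}_{G'}}$ makes the two filtered crystalline Dieudonn\'e modules coincide after base change along $\genring_G^\mu \to \genring_{G'}^{f \circ \mu}$, and the tensors $(\liftlow{t}_i'^\univ)$ match up by functoriality of the tensor formalism applied to $f$. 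I expect this verification, rather than any deeper obstruction, to be the bulk of the work, which is presumably why the author deems the proof straightforward.
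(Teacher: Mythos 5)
The paper declares this lemma ``straightforward and thus omitted,'' so there is no proof to compare against; your argument fills in exactly the kind of details the author presumably had in mind, and it is correct. The one point I would make sharper is the step in part~\eqref{hom functoriality of deformation spaces} where you pass from $\unipotent^\mu_G$ and $\unipotent^{f\circ\mu}_{G'}$ to a map between them: as defined in \ref{construction of universal dieudonne crystal}, $\unipotent^\mu_G$ is the intersection $\unipotent^\mu \cap G_\witt$ with $\unipotent^\mu$ living in $\GL(\module)_\witt$, and similarly $\unipotent^{f\circ\mu}_{G'} = \unipotent^{f\circ\mu} \cap G'_\witt$ with $\unipotent^{f\circ\mu} \subseteq \GL(\module')_\witt$; since $f$ does not map $\GL(\module)$ to $\GL(\module')$, one must use (as you do, implicitly) that $\unipotent^\mu_G$ is intrinsically the unipotent radical of the parabolic $P_{\sigma\inv\mu}$ of $G_\witt$ attached to the cocharacter $\sigma\inv\mu$, and likewise for $G'$, so that $f$ restricting parabolics to parabolics is an application of the general fact that for a closed immersion $H \hookrightarrow H'$ of reductive groups and a cocharacter $\lambda$ of $H$, one has $P^H_\lambda = P^{H'}_\lambda \cap H$ and $U^H_\lambda = U^{H'}_\lambda \cap H$. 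With that made explicit the proof is complete, and the ``moduli-theoretic compatibility'' you flag at the end is indeed the routine unwinding you describe.
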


\begin{remark}
With some additional work, one can show that the natural morphism $\Def_{\BT, G} \to \Def_{\BT', G'}$ in \eqref{hom functoriality of deformation spaces} is independent of the choice of $\mu \in \{\mu\}$. See \cite{Kim13}, Proposition 3.7.2 for details. 
\end{remark}

\section{Hodge-Newton reducible local Shimura data of Hodge type}\label{Hodge-Newton filtration for Hodge type}

In this section, we state and prove our main results on the Hodge-Newton decomposition and the Hodge-Newton filtration in the setting of unramified local Shimura data of Hodge type.

\subsection{EL realization of Hodge-Newton reducibility}$ $

\subsubsection{}\label{definition of hodge-newton type} Let $(G, [b], \{\mu\})$ be an unramified local Shimura datum of Hodge type. Choose a maximal torus $T \subseteq G$ and a Borel subgroup $\borel \subseteq G$ containing $T$, both defined over $\Z_p$. Let $\parabolic$ be a proper standard parabolic subgroup of $G$ with Levi factor $\levi$ and unipotent radical $\unipotent$. We say that $(G, [b], \{\mu\})$ is \emph{Hodge-Newton reducible} (with respect to $\parabolic$ and $\levi$) if there exist $\mu \in \{\mu\}$ which factors through $\levi$ and an element $b \in [b] \cap \levi(\quotientwitt)$ which satisfy the following conditions:
\begin{enumerate}[label=(\roman*)]
\item\label{hodge-newton type levi condition} $[b]_\levi \in B(\levi, \{\mu\}_\levi)$,
\item\label{hodge-newton type cocharacters condition} in the action of $\mu$ and $\newtonmap_G([b])$ on $\Lie (\unipotent) \otimes_{\Q_p} \quotientwitt$, only non-negative characters occur.
\end{enumerate}
Since $G$ is unramified, one can give an alternative definition in terms of some specific choice of $b \in [b] \cap \levi(\quotientwitt)$ and $\mu \in \{\mu\}$ (see \cite{Rapoport-Viehmann14}, Remark 4.25.).

\begin{example}\label{hodge-newton type for EL type}

Consider the case $G = \ELgroupintegral$ where $\integerring$ is the ring of integers of some finite unramified extension of $\Q_p$. Then $\levi$ is of the form 
\[\levi = \ELgroupintegral[j_1] \times \ELgroupintegral[j_2] \times \cdots \times \ELgroupintegral[j_r]. \]
Recall from Example \ref{BT with O-module structure} that we have an identification
\[\newtonset(G) =  \{ (r_1, r_2, \cdots, r_n) \in \Q^d:  r_1 \leq r_2 \leq \cdots \leq r_n\}.\]
Using this, we may write $\newtonmap_G([b]) = (\newtonmap_1, \newtonmap_2, \cdots, \newtonmap_n)$ and $\bar{\mu} = (\mu_1, \mu_2, \cdots, \mu_r)$. Then $(G, [b], \{\mu\})$ is of Hodge-Newton reducible with respect to $\parabolic$ and $\levi$ if and only if the following conditions are satisfied for each $k=1, 2, \cdots, r$:
\begin{enumerate}[label=(\roman*')]
\item $\newtonmap_1 + \newtonmap_2 + \cdots + \newtonmap_{j_k} = \mu_1 + \mu_2 + \cdots + \mu_{j_k}$,
\item $\newtonmap_{j_k}<\newtonmap_{j_k+1}$.
\end{enumerate}
In other words, $(G, [b], \{\mu\})$ is of Hodge-Newton reducible (with respect to $\parabolic$ and $\levi$) if and only if the Newton polygon $\newtonmap_G([b])$ and the $\sigma$-invariant Hodge polygon $\bar{\mu}$ have contact points which are break points of $\newtonmap_G([b])$ specified by $\levi$. We refer the readers to \cite{Mantovan-Viehmann10}, \S3 for more details. 
\end{example}

\subsubsection{}

For the rest of this section, we fix an unramified local Shimura datum of Hodge type $(G, [b], \{\mu\})$ which is Hodge-Newton reducible with respect to $\parabolic$ and $\levi$. Let us also fix a faithful $G$-representation $\module \in \text{Rep}_{\Z_p}(G)$ in the condition \ref{unram loc shimura datum condition 2} of \ref{definition of local shimura datum of hodge type}.  Our strategy is to study $(G, [b], \{\mu\})$ by embedding $G$ into another group $\EL{G}$ of EL type such that the datum $(\EL{G}, [b], \{\mu\})$ is also Hodge-Newton reducible. 

Note that if $G$ is not split, the datum $(\GL(\module), [b], \{\mu\})$ is not Hodge-Newton reducible in general. In fact, the map on the Newton sets 
$\newtonset(G) \longrightarrow \newtonset(\GL(\module))$ 
induced by the embedding $G \hooklongrightarrow \GL(\module)$ does not map $\bar{\mu}_G$ to the Hodge polygon $\mu_{\GL(\module)}$ since it does not respect the action of $\sigma$.

\begin{lemma}\label{EL realization of hodge-newton type}
There exists a group $\EL{G}$ of EL type with the following properties:
\begin{enumerate}[label=(\roman*)]
\item the embedding $G \hookrightarrow \GL(\module)$ factors through $\EL{G}$.
\item the datum $(\EL{G}, [b], \{\mu\})$ is Hodge-Newton reducible with respect to a parabolic subgroup $\EL{\parabolic} \subsetneq \EL{G}$ and its Levi factor $\EL{\levi}$ such that $\parabolic = \EL{\parabolic} \cap G$ and $\levi = \EL{\levi} \cap G$. 
\end{enumerate}
\end{lemma}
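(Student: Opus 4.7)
The strategy is to exhibit $\EL{G}$ as the centralizer in $\GL(\module)$ of a suitable commutative semisimple $\Z_p$-subalgebra $A \subseteq \End_{\Z_p}(\module)$ that is isomorphic to a product of rings of integers of finite unramified extensions of $\Q_p$. Any such centralizer is automatically of EL type, and the embedding $G \hookrightarrow \GL(\module)$ factors through it provided $A$ commutes with $G$.

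To construct $A$, I use the $G_\witt$-isotypic decomposition
\[
\module_\witt = \bigoplus_\lambda V_\lambda^{\oplus m_\lambda},
\]
where $\lambda$ runs over irreducible $G_\witt$-representations. Since $G$ is defined over $\Z_p$, Frobenius $\sigma$ permutes the set $\{\lambda\}$ while preserving multiplicities. Take $A$ to be the $\sigma$-invariants of the center of $\bigoplus_\lambda \End_{G_\witt}(V_\lambda^{\oplus m_\lambda}) = \bigoplus_\lambda M_{m_\lambda}(\witt)$, so that $A \cong \prod_\alpha \integerring_\alpha$ is indexed by $\sigma$-orbits $\alpha$ of isotypic types, with $\integerring_\alpha$ the ring of integers of the unramified extension of $\Q_p$ of degree $|\alpha|$. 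Setting $\EL{G} := Z_{\GL(\module)}(A) = \prod_\alpha \Res_{\integerring_\alpha|\Z_p} \GL_{\integerring_\alpha}(\module^{(\alpha)})$ (where $\module^{(\alpha)}$ is the $\Z_p$-summand cut out by the $\alpha$-th idempotent of $A$), we obtain a group of EL type through which $G \hookrightarrow \GL(\module)$ factors, since elements of $A$ commute with $G$ by construction.

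To produce the Levi $\EL{\levi}$ and the parabolic $\EL{\parabolic}$, I exploit the action of the unramified $\Z_p$-torus $T_0 := Z(\levi)^\circ$. The $T_0$-weight decomposition of $\module_\witt$ refines the $G_\witt$-isotypic decomposition, and grouping the resulting weight spaces by $\sigma$-orbits yields a $\Z_p$-rational refinement $\module = \bigoplus_{\alpha, \beta} \module^{(\alpha, \beta)}$. The crucial compatibility point is that $\integerring_\alpha$ acts on each summand $V_\lambda^{\oplus m_\lambda}$ by scalars and hence preserves every $T_0$-weight subspace, so each $\module^{(\alpha, \beta)}$ is an $\integerring_\alpha$-submodule of $\module^{(\alpha)}$. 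This makes the block-diagonal subgroup
\[
\EL{\levi} := \prod_{\alpha, \beta} \Res_{\integerring_\alpha|\Z_p} \GL_{\integerring_\alpha}(\module^{(\alpha, \beta)})
\]
a Levi subgroup of $\EL{G}$ of EL type, containing $\levi$. For $\EL{\parabolic}$, I order the summands $\module^{(\alpha, \beta)}$ within each $\module^{(\alpha)}$ by decreasing $\bar\mu$-weight; the corresponding parabolic has Levi $\EL{\levi}$ and is defined over $\Z_p$ because $\bar\mu$ is $\sigma$-invariant.

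Finally I verify the intersection identities and the two Hodge-Newton conditions. For $\levi = G \cap \EL{\levi}$: any element of $G$ lying in $\EL{\levi}$ preserves the full $T_0$-weight decomposition of $\module_\witt$, hence commutes with the faithful action of $T_0$ on $\module$, and therefore lies in $Z_G(T_0) = \levi$ by the standard fact that a Levi is the centralizer of its connected center. For $\parabolic = G \cap \EL{\parabolic}$, I use the characterization of $\parabolic$ as the parabolic of $G$ with Levi $\levi$ whose unipotent radical has non-negative $\mu$- (equivalently $\bar\mu$-) weights. Condition (i), namely $[b]_{\EL{\levi}} \in B(\EL{\levi}, \{\mu\}_{\EL{\levi}})$, follows from functoriality of the Newton and Kottwitz maps applied to $\levi \hookrightarrow \EL{\levi}$, together with the hypothesis $[b]_\levi \in B(\levi, \{\mu\}_\levi)$. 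For condition (ii), the Lie algebra $\Lie(\EL{\unipotent})$ splits as $\Lie(\unipotent)$ plus an additional summand of $\integerring_\alpha$-linear maps between distinct $T_0$-weight spaces whose weight difference is positive under the $\bar\mu$-ordering; the non-negativity of $\mu$ and $\newtonmap_G([b])$ on the first summand is the hypothesis, while on the second it is built into the construction of $\EL{\parabolic}$.

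The main obstacle is the compatibility analysis in the construction of $\EL{\levi}$: one must check carefully that the refined $\sigma$-orbit decomposition by $T_0$-weights is compatible with the $\integerring_\alpha$-module structure on each $\module^{(\alpha)}$ (so that $\EL{\levi}$ really is a Levi of $\EL{G}$), and that $G \cap \EL{\levi}$ does not accidentally pick up extra elements from the relative Weyl group $N_G(T_0)/\levi$ via non-trivial $\sigma$-invariant permutations of $T_0$-weights within orbits. Both issues reduce to the fact that $\integerring_\alpha$ acts by scalars on each isotypic piece, but making this precise requires a careful unpacking of the interplay between the $\sigma$-orbits on isotypic types and those on $T_0$-characters.
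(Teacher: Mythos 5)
Your proof is correct and rests on the same fundamental observation as the paper's: the $G_\witt$-stable decomposition of $\module_\witt$ is permuted by $\sigma$, and grouping by $\sigma$-orbits produces a $\Z_p$-rational decomposition whose block-diagonal stabilizer is a product of Weil restrictions of general linear groups. The paper packages this via the character-space decomposition $V = \bigoplus_\chi V_\chi$ regrouped by Weyl orbits and then by $\sigma$-orbits (citing Serre), and handles the parabolic and Levi indirectly: it chooses a Borel pair $(\EL{\borel}, \EL{T})$ of $\EL{G}$ compatible with $(\borel, T)$ and invokes SGA3, Exp.\ XXVI, Cor.\ 6.10 to produce $\EL{\parabolic}, \EL{\levi}$ with the stated intersection properties. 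You instead realize $\EL{G}$ as the centralizer of the commutative semisimple $\Z_p$-algebra $A$ cut out from the $\sigma$-invariant center of the isotypic endomorphism algebra, which is a somewhat cleaner and more intrinsic formulation, and you construct $\EL{\levi}$ and $\EL{\parabolic}$ explicitly from the $T_0 = Z(\levi)^\circ$-weight decomposition together with the $\bar\mu$-ordering, verifying the intersections directly rather than citing SGA3. This buys a self-contained argument at the cost of the compatibility bookkeeping you flag at the end; that concern resolves by the mechanism you already identify: since $A$ commutes with $G$, the $\integerring_\alpha$-action commutes with $T_0$ and preserves each $T_0$-weight space of $\module_\witt$, so $\EL{\levi}(\witt)$ stabilizes each such weight space individually, forcing $G \cap \EL{\levi} \subseteq Z_G(T_0) = \levi$; no relative-Weyl elements can appear. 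The verification of Hodge--Newton reducibility via the $\sigma$-equivariance of cocharacters under $G \hookrightarrow \EL{G}$ (so $\bar\mu_G \mapsto \bar\mu_{\EL{G}}$) and functoriality of the Newton and Kottwitz maps matches the paper exactly.
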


\begin{proof}
Write $V := \module \otimes_{\Z_p} {\Q^\unram}$ where $\Q^\unram$ is the maximal unramified extension of $\Q_p$ in a fixed algebraic closure. We know that $G$ is split over $\Q^\unram$ for being unramified over $\Q_p$. Hence $V$ admits a decomposition into character spaces
\begin{equation}\label{char space decomp over K0} V = \bigoplus_{\chi \in \characters(T)} V_\chi \end{equation}
with the property that $\sigma(V_\chi) = V_{\sigma\chi}.$

For each $\chi \in \characters(T)$, let $\weylorbit{\chi}$ denote the $\weylgroup$-conjugacy class of $\chi$ and write $V_{\weylorbit{\chi}} := \oplus_{\omega \in \weylgroup} V_{\omega \cdot \chi}$. Since $V$ is a $G$-representation, we can rewrite the decomposition \eqref{char space decomp over K0} as
\[ V = \bigoplus_{\weylorbit{\chi} \in \characters(T)/\weylgroup} V_{\weylorbit{\chi}}\]
where $V_{\weylorbit{\chi}}$'s are sub $G$-representations (see \cite{Serre68}, Theorem 4.) with the property that $V_{\weylorbit{\sigma \chi}} = \sigma(V_{\weylorbit{\chi}})$. If a $\weylgroup$-conjugacy class $\weylorbit{\chi} \in \characters(T)/\weylgroup$ is in an orbit of size $m$ under the action of $\sigma$, the $G$-representation
\[ \bigoplus_{i=0}^{m-1} V_{\weylorbit{\sigma^i \chi}}\]
is also a $\ELgroup$-representation where $\unramext$ is the field of definition of $\weylorbit{\chi}$, which is a degree $m$  unramified extension of $\Q_p$ (cf. \eqref{char space decomp EL type} in Example \ref{BT with O-module structure}). Hence the embedding $G_{\Q_p} \hookrightarrow \GL(\module_{\Q_p})$ factors through a group of the form $\prod \ELgroup[j]$ where each $\unramext_j$ is the field of definition of an orbit in $\characters(T)/\weylgroup$.  Then by \cite{Serre68}, Theorem 5, we can take the pull-back of this embedding over $\Z_p$ to obtain
\[G \hooklongrightarrow \prod \Res_{\integerring_j|\Z_p} \GL_{n_j} \hooklongrightarrow \GL(\module) \]
where $\integerring_j$ is the ring of integers of $\unramext_j$. 

We take 
\[\EL{G}:= \prod \Res_{\integerring_j|\Z_p} \GL_{n_j}.\]
Choose a Borel pair $(\EL{\borel}, \EL{T})$ of $\EL{G}$ such that $\borel \subseteq \EL{\borel}$ and $T \subseteq \EL{T}$. Then we get a proper standard parabolic subgroup $\EL{\parabolic} \subsetneq \EL{G}$ with Levi factor $\EL{\levi}$ such that $\parabolic = \EL{\parabolic} \cap G$ and $\levi = \EL{\levi} \cap G$ (e.g. by using \cite{SGA3}, Exp. XXVI, Cor. 6.10.). 

It is evident from the construction that the embedding $G \hooklongrightarrow \EL{G}$ respects the action of $\sigma$ on cocharacters. Hence the induced map on the Newton sets $\newtonset(G) \longrightarrow \newtonset(\EL{G})$ maps $\bar{\mu}_G$ to the $\sigma$-invariant Hodge polygon $\bar{\mu}_{\EL{G}}$. Combining this fact with the functoriality of the Kottwitz map and the Newton map, we verify that the datum $(\EL{G}, [b], \{\mu\})$ is Hodge-Newton reducible with respect to $\EL{\parabolic}$ and $\EL{\levi}$. 
\end{proof}

We will refer to the datum $(\EL{G}, [b], \{\mu\})$ in Lemma \ref{EL realization of hodge-newton type} as an \emph{EL realization} of the Hodge-Newton reducible datum $(G, [b], \{\mu\})$. 

\begin{remark}
If $G$ is split, the construction in the proof above yields $\EL{G} = \GL(\module)$. 
\end{remark}

\subsection{The Hodge-Newton decomposition and the Hodge-Newton filtration}$ $

\subsubsection{}\label{construction of sub local shimura data for hodge newton reducibility}
Fix an EL realization $(\EL{G}, [b], \{\mu\})$ of our datum $(G, [b], \{\mu\})$, and take $\EL{\parabolic}$ and $\EL{\levi}$ as in Lemma \ref{EL realization of hodge-newton type}. In a view of the functorial properties in Lemma \ref{functoriality of unram local shimura data of hodge type}, Lemma \ref{functoriality of deligne-lusztig sets} and Lemma \ref{functoriality of deformation spaces}, we will always assume for simplicity that $\EL{G}$ is of the form
\[\EL{G}:= \ELgroupintegral\] 
where $\integerring$ is the ring of integers of some finite unramified extension $\unramext$ of $\Q_p$. Then $\EL{\levi}$ is of the form 
\begin{equation}\label{EL levi decomp}\EL{\levi} = \ELgroupintegral[j_1] \times \ELgroupintegral[j_2] \times \cdots \times \ELgroupintegral[j_r]. \end{equation}
%We let $\EL{\levi}_j$ denote the $j$-th factor in the above decomposition, and take $\levi_j$ to be the image of $\levi$ under the projection $\EL{\levi} \twoheadrightarrow \EL{\levi}_j$. 

Let us now choose $b \in [b] \cap \levi(\quotientwitt)$ and $\mu \in \{\mu\}$ as in \ref{hodge-newton type levi condition} of \ref{definition of hodge-newton type}. After taking $\sigma$-conjugate in $\levi(\quotientwitt)$ if necessary, we may assume that $b \in \levi(\witt) \mu(p) \levi(\witt)$. Let $\generalhodge{\dieudonnemodule} = (\dieudonnemodule, (t_i))$ be the corresponding $F$-crystal over $k$ with $G$-structure. If $\{\mu\}$ is minuscule, we let $\generalhodge{\BT} = (\BT, (t_i))$ denote the corresponding $p$-divisible group over $k$ with $G$-structure. 

Note that the tuple $(\levi, [b]_\levi, \{\mu\}_\levi)$ is an unramified local Shimura datum of Hodge type; indeed, with our choice of $b \in [b]_\levi$ and $\mu \in \{\mu\}_\levi$ one immediately verifies the conditions \ref{unram loc shimura datum condition 1'} and \ref{unram loc shimura datum condition 2'} of \ref{definition of local shimura datum of hodge type}.

\begin{theorem}\label{existence of hodge newton decomp in char p} Notations as above. In addition, we set the following notations:
\begin{itemize}
\item $\EL{\levi}_j$ denotes the $j$-th factor in \eqref{EL levi decomp},
\item $\levi_j$ is the image of $\levi$ under the projection $\EL{\levi} \twoheadrightarrow \EL{\levi}_j$,
\item $b_j$ is the image of $b$ under the projection $\levi \twoheadrightarrow \levi_j$,
\item $\mu_j$ is the cocharacter of $\levi_j$ obtained by composing $\mu$ with the projection $\levi \twoheadrightarrow \levi_j$.
\end{itemize}
Then $\generalhodge{\dieudonnemodule}$ admits a decomposition
\begin{equation}\label{hodge-newton decomp f-crystal} \generalhodge{\dieudonnemodule} = \generalhodge{\dieudonnemodule}_1 \times \generalhodge{\dieudonnemodule}_2 \times \cdots \times \generalhodge{\dieudonnemodule}_r\end{equation}
where $\generalhodge{\dieudonnemodule}_j$ is an $F$-crystal with $\levi_j$-structure that arises from an unramified local Shimura datum of Hodge type $(\levi_j, [b_j], \{\mu_j\})$. 

When $\{\mu\}$ is minuscule, we also have a decomposition
\begin{equation}\label{hodge-newton decomp p-divisible group}\generalhodge{\BT} = \generalhodge{\BT}_1 \times  \generalhodge{\BT}_2 \times \cdots \times \generalhodge{\BT}_r\end{equation}
where $\generalhodge{\BT}_j$ is a $p$-divisible group with $\levi_j$-structure corresponding to $\generalhodge{\dieudonnemodule}_j$. 
\end{theorem}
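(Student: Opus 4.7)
The plan is to transfer the Hodge-Newton decomposition of EL-type data, established by Mantovan--Viehmann \cite{Mantovan-Viehmann10} and Shen \cite{Shen13}, to the Hodge-type setting by way of the EL realization $(\EL{G}, [b], \{\mu\})$ supplied by Lemma \ref{EL realization of hodge-newton type}. First I would apply the known EL-case theorem to $(\EL{G}, [b], \{\mu\})$, which is Hodge-Newton reducible with respect to $\EL{\parabolic}$ and $\EL{\levi} = \prod_{j=1}^r \EL{\levi}_j$. This produces a decomposition
\[ \dieudonnemodule = \dieudonnemodule_1 \oplus \cdots \oplus \dieudonnemodule_r \]
of the underlying $F$-crystal in which each summand $\dieudonnemodule_j$ is $F$-stable, compatible with the Hodge filtration, and carries the $\integerring$-action corresponding to $\EL{\levi}_j$. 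Equivalently, this is the decomposition induced on $\dieudonnemodule$ by the product structure of $\EL{\levi}$ viewed as a subgroup of $\GL(\dieudonnemodule)$.

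Next I would extract the $\levi_j$-structure on each factor. Since $\mu$ factors through $\levi \subseteq \EL{\levi}$ and $b \in \levi(\quotientwitt) \cap \EL{\levi}(\witt)\mu(p)\EL{\levi}(\witt)$, both the Frobenius and the Hodge filtration on $\dieudonnemodule$ lie inside $\EL{\levi}$ and hence respect the above decomposition. Under the projection $\EL{\levi} \twoheadrightarrow \EL{\levi}_j$ the images $b_j, \mu_j$ land in $\levi_j$ by the very definition of $\levi_j$. Lemma \ref{functoriality of unram local shimura data of hodge type}, applied in two steps, then shows that $(\levi_j, [b_j], \{\mu_j\})$ is an unramified local Shimura datum of Hodge type: the inclusion $\levi \hookrightarrow G$ first yields that $(\levi, [b]_\levi, \{\mu\}_\levi)$ is of Hodge type (as noted in \ref{construction of sub local shimura data for hodge newton reducibility}), and then the surjection $\levi \twoheadrightarrow \levi_j$ transfers this property to $\levi_j$. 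Equipping $\dieudonnemodule_j$ with a family of tensors cutting out $\levi_j \subseteq \GL(\dieudonnemodule_j)$ in the sense of Kisin, I would verify that $\generalhodge{\dieudonnemodule}_j := (\dieudonnemodule_j, (t_{j,i}))$ is precisely the $F$-crystal with $\levi_j$-structure associated to $(\levi_j, [b_j], \{\mu_j\})$ and the chosen representatives $(b_j, \mu_j)$.

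The main technical obstacle will be showing that the $\levi_j$-defining tensors on each $\dieudonnemodule_j$ are naturally induced from the $G$-defining tensors $(t_i)$ on $\dieudonnemodule$, so that the factorwise structures really encode the original $G$-structure rather than something coarser. The key observation should be that $(t_i)$ is $\levi$-invariant (because $\levi \subseteq G$), that the $\levi$-action on each $\dieudonnemodule_j$ factors through $\levi_j$ (because the remaining factors of $\EL{\levi}$ act trivially on $\dieudonnemodule_j$), and that $\levi$ embeds into $\prod_j \levi_j$; these facts together should force the components of $(t_i)$ in each summand to give canonical $\levi_j$-invariant tensors, by an argument parallel to Shen's analysis in the PEL setting.

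Finally, the minuscule case \eqref{hodge-newton decomp p-divisible group} follows by passing through contravariant Dieudonn\'e theory: the decomposition $\dieudonnemodule = \bigoplus \dieudonnemodule_j$ corresponds to a direct product decomposition $\BT = \prod \BT_j$, and the tensors transport along this equivalence to equip each $\BT_j$ with a $\levi_j$-structure whose associated $F$-crystal is the $\generalhodge{\dieudonnemodule}_j$ produced above.
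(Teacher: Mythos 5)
Your proof follows essentially the same route as the paper: apply the Mantovan--Viehmann EL-case decomposition to the EL realization $(\EL{G},[b],\{\mu\})$, verify that each $(\levi_j,[b_j],\{\mu_j\})$ is an unramified local Shimura datum of Hodge type, and then equip each factor with $\levi_j$-structure; the minuscule case passes through Dieudonn\'e theory. Two small issues in the write-up are worth flagging. First, your invocation of Lemma \ref{functoriality of unram local shimura data of hodge type} ``in two steps'' is misattributed for the first step: part \eqref{hom functoriality of unram local shimura data of hodge type} of that lemma only pushes forward along a map \emph{out of} a group already known to carry a Shimura datum, so applied to the inclusion $\levi \hookrightarrow G$ it would go in the wrong direction. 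The fact that $(\levi,[b]_\levi,\{\mu\}_\levi)$ is of Hodge type is established in \ref{construction of sub local shimura data for hodge newton reducibility} by a direct check of conditions \ref{unram loc shimura datum condition 1'} and \ref{unram loc shimura datum condition 2'}, which is what the paper's proof also relies on; your second step, applying the lemma to the surjection $\levi \twoheadrightarrow \levi_j$, is fine, and the paper instead checks \ref{unram loc shimura datum condition 1'} and \ref{unram loc shimura datum condition 2'} directly for $\levi_j$ using $b \in \levi(\witt)\mu(p)\levi(\witt)$ and the fact that $\dieudonnemodule_j$ is the desired lattice. Second, the ``main technical obstacle'' you raise --- that the $\levi_j$-defining tensors on $\dieudonnemodule_j$ must be induced from the $G$-defining tensors $(t_i)$ on $\dieudonnemodule$ --- is not actually required by the statement of the theorem, which only asserts that each $\generalhodge{\dieudonnemodule}_j$ is an $F$-crystal with $\levi_j$-structure arising from the datum $(\levi_j,[b_j],\{\mu_j\})$; the paper equips each $\dieudonnemodule_j$ with such tensors directly once the datum is verified, with no compatibility claim of the sort you are anticipating. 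So your sketch is not wrong, but that paragraph is extra work beyond what is needed.
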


\begin{proof}
We need only prove the first part, as the second part follows immediately from the first part via Dieudonn\'e theory. 

Considering $b$ as an element of $[b]_{\EL{G}}$, we get an $F$-crystal over $k$ with $\EL{G}$-structure $\EL{\dieudonnemodule}$ from an unramified local Shimura datum of Hodge type $(\EL{G}, [b], \{\mu\})$. As explained in Example \ref{BT with O-module structure}, we can regard $\EL{G}$-structure as an action of $\integerring$ which we refer to as $\integerring$-module structure. Since $(\EL{G}, [b], \{\mu\})$ is Hodge-Newton reducible, \cite{Mantovan-Viehmann10}, Corollary 7 yields a decomposition
\begin{equation}\label{EL hodge-newton decomp} \EL{\dieudonnemodule} = \EL{\dieudonnemodule}_1 \times \EL{\dieudonnemodule}_2 \times \cdots \times \EL{\dieudonnemodule}_r\end{equation}
where $\EL{\dieudonnemodule}_j$ is an $F$-crystal over $k$ with $\integerring$-module structure which arises from an unramified local Shimura datum of Hodge type $(\EL{\levi}_j, [b_j], \{\mu_j\})$. In fact, $\EL{\dieudonnemodule}_j$ corresponds to the choice $b_j \in [b_j]$ (and $\mu_j \in \{\mu_j\}$).

\emph{A priori}, it is not clear that the tuple $(\EL{\levi}_j, [b_j], \{\mu_j\})$ is an unramified local Shimura datum of Hodge type. This is indeed implicitly implied in the statement and the proof of \cite{Mantovan-Viehmann10}, Corollary 7.

We check that the tuple $(\levi_j, [b_j], \{\mu_j\})$ is an unramified local Shimura datum of Hodge type by verifying the conditions \ref{unram loc shimura datum condition 1'} and \ref{unram loc shimura datum condition 2'} of \ref{definition of local shimura datum of hodge type}. For \ref{unram loc shimura datum condition 1'}, we simply observe that $b_j \in \levi_j(\witt)\mu(p)\levi_j(\witt)$, which follows from our assumption that $b \in \levi(\witt)\mu(p)\levi(\witt)$ using the decomposition $\EL{\levi} = \EL{\levi}_1 \times \EL{\levi}_2 \times \cdots \times \EL{\levi}_r$. Then the condition \ref{unram loc shimura datum condition 2'} immediately follows since we already know that $\dieudonnemodule_j$ gives the desired $\witt$-lattice for $b_j$ and $\mu_j$.

Since $(\levi_j, [b_j], \{\mu_j\})$ is an unramified local Shimura datum of Hodge type, we can equip each $\dieudonnemodule_j$ with $\levi_j$-structure corresponding to the choice $b_j \in [b_j]$ (and $\mu_j \in \{\mu_j\}$). We thus get the desired decomposition \eqref{hodge-newton decomp f-crystal} from the decomposition \eqref{EL hodge-newton decomp}. 
\end{proof}

\begin{remark} We give an alternative proof of Theorem \ref{existence of hodge newton decomp in char p} using affine Deligne-Lusztig sets. After proving that the tuples $(\levi_j, [b_j], \{\mu_j\})$ are unramified local Shimura data of Hodge type, we find the following maps of affine Deligne-Lusztig sets:
\[ \DeligneLusztig{G}{\mu}{b} \stackrel{\sim}{\longrightarrow} \DeligneLusztig{\levi}{\mu}{b} \hookrightarrow \DeligneLusztig{\levi_1}{ \mu_1}{ b_1} \times \DeligneLusztig{\levi_2}{ \mu_2}{ b_2} \times \cdots \times \DeligneLusztig{\levi_s}{ \mu_r}{ b_r}.   \]
Here the first isomorphism is given by \cite{Mantovan-Viehmann10}, Theorem 6, whereas the second map is induced by the embedding $\levi \hookrightarrow \levi_1 \times \levi_2 \times \cdots \times \levi_j$ as in Lemma \ref{functoriality of deligne-lusztig sets}. Now the desired decomposition follows from the composition of these two maps via the moduli interpretation of affine Deligne-Lusztig sets given in Proposition \ref{moduli interpretation of deligne-lusztig sets}.
\end{remark}

\subsubsection{}\label{definition of hodge newton filtration}

We will refer to the decomposition \eqref{hodge-newton decomp f-crystal} in Theorem \ref{existence of hodge newton decomp in char p} as the \emph{Hodge-Newton decomposition} of $\generalhodge{\dieudonnemodule}$ (associated to $\parabolic$ and $\levi$). For $1 \leq a \leq b \leq r$, we define
\begin{equation*} \dieudonnemodule_{a, b} := \prod_{s=a}^b \dieudonnemodule_s.\end{equation*}
Then we obtain a filtration
\begin{equation}\label{hodge-newton filtration f-crystal} 0 \subset \dieudonnemodule_{1, 1} \subset \dieudonnemodule_{1, 2} \subset \cdots \subset \dieudonnemodule_{1, r} = \dieudonnemodule\end{equation}
such that each quotient $\dieudonnemodule_{1, s}/\dieudonnemodule_{1, s-1} \simeq \dieudonnemodule_s$ carries $\levi_s$-structure. We call this filtration the \emph{Hodge-Newton filtration} of $\generalhodge{\dieudonnemodule}$ (associated to $\parabolic$ and $\levi$). 

When $\{\mu\}$ is minuscule, we will refer to the decomposition \eqref{hodge-newton decomp p-divisible group} in Theorem \ref{existence of hodge newton decomp in char p} as the \emph{Hodge-Newton decomposition} of $\generalhodge{\BT}$ (associated to $\parabolic$ and $\levi$). For $1 \leq a \leq b \leq r$, we define
\begin{equation*} \BT_{a, b} := \prod_{s=a}^b \BT_s.\end{equation*}
Then via (contravariant) Dieudonn\'e theory, the filtration \eqref{hodge-newton filtration f-crystal} yields a filtration
\begin{equation}\label{hodge-newton filtration p-divisible group} 0 \subset \BT_{r, r} \subset \BT_{r-1, r} \subset \cdots \subset \BT_{1, r} = \BT,\end{equation}
where each quotient $\BT_{s, r}/\BT_{s+1, r} \simeq \BT_s$ carries $\levi_s$-structure. We call this filtration the \emph{Hodge-Newton filtration} of $\generalhodge{\BT}$ (associated to $\parabolic$ and $\levi$).

\begin{theorem}\label{lifting of hodge newton filtration} Assume that $p>2$ and $\{\mu\}$ is minuscule. Let $\genring$ be a formally smooth $\witt$-algebra of the form $\genring = \witt[[u_1, \cdots, u_N]]$ or $\genring = \witt[[u_1, \cdots, u_N]]/(p^m)$. Let $\deform{\generalhodge{\BT}}$ be a deformation of $\generalhodge{\BT}$ over $\genring$ with an isomorphism $\alpha: \deform{\generalhodge{\BT}} \otimes_\genring k \cong \generalhodge{\BT}$. Then there exists a unique filtration of $\deform{\BT}$
\[ 0 \subset \deform{\BT}_{r, r} \subset \deform{\BT}_{r-1, r} \subset \cdots \subset \deform{\BT}_{1, r} = \deform{\BT} \]
which lifts the Hodge-Newton filtration \eqref{hodge-newton filtration p-divisible group} in the sense that $\alpha$ induces isomorphisms $\deform{\BT}_{s, r} \otimes_\genring k \cong \BT_{s,r}$ and $\generalhodge{\deform{\BT}_{s, r}/\deform{\BT}_{s+1, r}} \otimes_\genring k \cong \generalhodge{\BT}_s$ for $s = 1, 2, \cdots, r$. 
\end{theorem}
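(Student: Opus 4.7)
Plan. My plan is to reduce to the EL-type case via the EL realization from Lemma \ref{EL realization of hodge-newton type}, invoke the known EL-type Hodge-Newton filtration lifting theorem, and then upgrade the resulting $\EL{\levi}_s$-structure on each quotient to a $\levi_s$-structure by exploiting the given $G$-structure on $\deform{\BT}$. First, by the functoriality of deformation spaces (Lemma \ref{functoriality of deformation spaces}) applied to the embedding $G \hookrightarrow \EL{G}$, the $G$-deformation $\deform{\generalhodge{\BT}}$ yields an $\EL{G}$-deformation $\deform{\generalhodge{\EL{\BT}}}$ of $\generalhodge{\EL{\BT}}$ over $\genring$ whose underlying $p$-divisible group is still $\deform{\BT}$.

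Since $(\EL{G}, [b], \{\mu\})$ is Hodge-Newton reducible with respect to $\EL{\parabolic}$ and $\EL{\levi} = \prod_s \EL{\levi}_s$, the EL-type Hodge-Newton filtration lifting theorem of Shen \cite{Shen13}, extending Mantovan-Viehmann \cite{Mantovan-Viehmann10} and using the hypothesis $p>2$, applies to $\deform{\generalhodge{\EL{\BT}}}$. This produces a unique filtration of $\deform{\BT}$ lifting \eqref{hodge-newton filtration p-divisible group} (which agrees on the special fiber with the Hodge-Newton filtration of $\EL{\BT}$), and equips each quotient $\deform{\BT}_s := \deform{\BT}_{s,r}/\deform{\BT}_{s+1,r}$ with a canonical $\EL{\levi}_s$-structure lifting that of $\EL{\BT}_s$. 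This establishes assertion (i) together with the uniqueness in (ii): any filtration of $\deform{\BT}$ whose quotients carry $\levi_s$-structure automatically inherits $\EL{\levi}_s$-structure via $\levi_s \hookrightarrow \EL{\levi}_s$ and so must coincide with the one produced by Shen.

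What remains is to refine the $\EL{\levi}_s$-structure on each $\deform{\BT}_s$ to a $\levi_s$-structure. The conceptual point is that $\levi = G \cap \EL{\levi}$ inside $\EL{G}$ by Lemma \ref{EL realization of hodge-newton type}, so possessing simultaneously the $G$-structure on $\deform{\BT}$ and the $\EL{\levi}$-compatibility supplied by Shen's filtration should cut out a $\levi$-structure on $\deform{\BT}$, which descends via the projections $\levi \twoheadrightarrow \levi_s$ to $\levi_s$-structures on the $\deform{\BT}_s$. Concretely, by the universal property in Proposition \ref{universal deformation of a p-divisible group with tate tensors} applied to $(\levi_s, [b_s], \{\mu_s\})$, it suffices to exhibit Frobenius-invariant tensors $(\liftlow{t}_{i,s})$ in the $0$th filtration of $\D(\deform{\BT}_s)$ lifting the tensors $(t_{i,s}) \in \dieudonnemodule_s^\otimes$ that cut out $\levi_s \subset \GL(\dieudonnemodule_s)$, and I plan to construct these by restricting the Faltings-Kim tensors $(\liftlow{t}_i)$ on $\D(\deform{\BT})$ (provided by Proposition \ref{universal deformation of a p-divisible group with tate tensors} for $\deform{\generalhodge{\BT}}$) to $\D(\deform{\BT}_{s, r})$ and projecting onto $\D(\deform{\BT}_s)$. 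The principal obstacle is verifying this filtration-compatibility: on the special fiber it is immediate from the Hodge-Newton decomposition $\dieudonnemodule = \prod \dieudonnemodule_s$, but on the deformation one has only a filtration, and one must argue that $(\liftlow{t}_i)$ is block upper triangular with respect to it. I expect the cleanest argument combines the uniqueness of Frobenius-invariant crystalline tensor lifts (from Proposition \ref{universal deformation of a p-divisible group with tate tensors} and Proposition \ref{universal deformation of a p-divisible group}) with Shen's lifted filtration and its $\EL{\levi}$-compatibility, thereby forcing $(\liftlow{t}_i)$ to preserve the filtration and yielding the required tensors $(\liftlow{t}_{i,s})$ on each $\D(\deform{\BT}_s)$.
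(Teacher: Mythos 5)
Your overall reduction strategy coincides with the paper's: pass to the EL realization $\EL{G}$ via Lemma \ref{EL realization of hodge-newton type}, use the closed embedding $\Def_{\BT, G} \hooklongrightarrow \Def_{\BT, \EL{G}}$ from Lemma \ref{functoriality of deformation spaces} to view $\deform{\generalhodge{\BT}}$ as a deformation of $\EL{\BT}$, then invoke Shen's lifting theorem in the EL/PEL case to obtain the (unique) filtration of $\deform{\BT}$ together with $\EL{\levi}_s$-structures on the successive quotients. Your uniqueness argument — that any filtration whose quotients carry $\levi_s$-structure is in particular an $\EL{\levi}_s$-structured filtration, hence coincides with Shen's — is correct and is the argument the paper leaves implicit.

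Where you diverge is in the final step, upgrading the $\EL{\levi}_s$-structure on $\deform{\BT}_s$ to a $\levi_s$-structure. The paper does \emph{not} attempt to restrict and project the Faltings--Kim tensors $(\liftlow{t}_i)$ along the filtration; it instead works on the special fiber. Writing $\beta$, $\beta_1$, $\beta_2$ for the Dieudonn\'e-module isomorphisms corresponding to $\alpha$, $\alpha_1$, $\alpha_2$, one observes $\beta \in G(\witt)$ (the deformation is a $G$-deformation) and $\beta_j \in \EL{\levi}_j(\witt)$ (Shen). Since $\beta_1, \beta_2$ are the graded pieces of $\beta$ with respect to the lifted filtration, $\beta$ preserves the filtration and therefore lies in $G(\witt) \cap \EL{\parabolic}(\witt) = \parabolic(\witt)$; its image in $\EL{\levi}(\witt)$ lands in $\levi(\witt)$, giving $\beta_j \in \levi_j(\witt)$. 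This forces the quotients to carry the desired $\levi_j$-structure. The group-theoretic comparison on the special fiber is a cleaner route than the one you sketch, because it circumvents exactly the obstacle you identify — there is no need to prove directly that the lifted tensors on $\D(\deform{\BT})$ are block-triangular for Shen's filtration.

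As written, your proposal has a genuine gap in this last step: you say you ``expect'' a combination of uniqueness of Frobenius-invariant crystalline lifts and Shen's $\EL{\levi}$-compatibility to force $(\liftlow{t}_i)$ to preserve the filtration, but you do not supply the argument. Making your route work would require a concrete reason why the Frobenius on $\D(\deform{\BT})$ — which, in Faltings's description, is $f(u_t)\circ (F \otimes \sigma)$ for some $f(u_t) \in \completion{\unipotent}_G^\mu(\genring)$ — interacts with the filtration so that the tensors do restrict; this is not automatic, since $\completion{\unipotent}_G^\mu$ need not be contained in $\EL{\parabolic}$. The paper's $\beta$-argument is precisely the device that replaces this missing step.
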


Note that we require each quotient $\generalhodge{\deform{\BT}_{s, r}/\deform{\BT}_{s+1, r}}$ to lift tensors on $\generalhodge{\BT}_s$. 

\begin{proof} 
%We need only prove existence since uniqueness will follow by Grothendieck-Messing theory. 
We will only consider the case $r=2$ as the argument easily extends to the general case. 
%For this case, we need to find a filtration of $\deform{\generalhodge{\BT}}$
%\[ 0 \subseteq \deform{\BT}_2 \subseteq \deform{\BT}\]
%such that $\alpha$ restricts to isomorphisms $\deform{\generalhodge{\BT}}_2 \otimes_R k \cong \generalhodge{\BT}_2$ and $\generalhodge{\deform{\BT}/\deform{\BT}_2} \otimes_R k \cong \generalhodge{\BT}_1$. 

Take unramified local Shimura data of Hodge type $(\levi_j, [b_j], \{\mu\}_j)$ and $(\EL{\levi}_j, [b_j], \{\mu_j\})$ as in Theorem \ref{existence of hodge newton decomp in char p}. In addition, let $\EL{\BT}$ be the $p$-divisible group over $k$ with $\integerring$-module structure that arises from the datum $(\EL{G}, [b], \{\mu\})$ with the choice $b \in [b]$, and let $\EL{\BT}_j$ be the $p$-divisible group over $k$ with $\integerring$-module structure that arises from the datum $(\EL{\levi}_j, [b_j], \{\mu_j\})$ with the choice $b_j \in [b_j]$. Then the filtration
\[ 0 \subseteq \EL{\BT}_2 \subseteq \EL{\BT}\]
is the Hodge-Newton filtration of $\EL{\BT}$.

By the functorial properties of deformation spaces in Lemma \ref{functoriality of deformation spaces}, the closed embedding $G \hooklongrightarrow \EL{G}$ induces a closed embedding
\[ \Def_{\BT, G} \hooklongrightarrow \Def_{\BT, \EL{G}}.\]
Thus $\deform{\generalhodge{\BT}}$ yields a deformation $\deform{\EL{\BT}}$ of $\EL{\BT}$ over $\genring$. Then by \cite{Shen13}, Theorem 5.4, $\deform{\EL{\BT}}$ admits a (unique) filtration
\[ 0 \subseteq \deform{\BT}_2 \subseteq \deform{\BT}\]
such that $\alpha$ induces isomorphisms $\alpha_1: \EL{\deform{\BT}/\deform{\BT}_2} \otimes_\genring k \cong \EL{\BT}_1$ and $\alpha_2: \deform{\EL{\BT}}_2 \otimes_\genring k \cong \EL{\BT}_2$.

It remains to show that $\deform{\BT}/\deform{\BT}_2$ and $\deform{\BT}_2$ are equipped with tensors which lift the tensors of $\generalhodge{\BT}_1$ and $\generalhodge{\BT}_2$ respectively in the sense of Proposition \ref{universal deformation of a p-divisible group with tate tensors}. 
%In other words, it suffices to show that $\deform{\BT}_2 \in \Def_{\BT_2, \levi_2}(\genring)$ and $\deform{\BT}/\deform{\BT}_2 \in \Def_{\BT_1, \levi_1}(\genring)$. 
%or equivalently $(\deform{\BT}/\deform{\BT}_2) \times \deform{\BT}_2 \in \Def_{\BT_1, \levi_1}(\genring) \times \Def_{\BT_2, \levi_2}(\genring)$. 
%Let $\module_j \in \text{Rep}_{\EL{\levi}_j}(\Z_p)$ be a faithful $\EL{\levi}_j$-representation such that $\D(\BT_2) \simeq \module_j^* \otimes_{\Z_p} \witt$. 
Note that we have isomorphisms of Dieudonn\'e modules
\[ \beta: \D(\deform{\BT} \otimes_\genring k) \cong \D(\BT), \quad \beta_1: \D((\deform{\BT}/\deform{\BT}_2) \otimes_\genring k) \cong \D(\BT_1), \quad \beta_2: \D(\deform{\BT}_2 \otimes_\genring k) \cong \D(\BT_2)\]
corresponding to the isomorphisms $\alpha, \alpha_1$ and $\alpha_2$. We may regard $\beta$ as an element of $G(\witt)$ by identifying both modules with $\module^* \otimes_{\Z_p} \witt$. Similarly, we may regard each $\beta_j$ as an element of $\EL{\levi}_j(\witt)$. Then $\beta_j$ should be in the image of $\EL{\levi}(\witt) \cap G(\witt) = \levi(\witt)$ under the projection $\EL{\levi} \twoheadrightarrow \EL{\levi}_2$ since it is induced by $\beta$. Hence we have $\beta_j \in \levi_j(\witt)$ for each $j=1, 2$.  This implies that $\deform{\BT}/\deform{\BT}_2$ and $\deform{\BT}_2$ respectively lift the tensors of $\generalhodge{\BT}_1$ and $\generalhodge{\BT}_2$ via $\alpha_1$ and $\alpha_2$, completing the proof. 
\end{proof}

\section{Serre-Tate theory for local Shimura data of Hodge type} \label{serre tate theory for hodge type}

Our goal for this section is to establish a generalization of Serre-Tate deformation theory for $p$-divisible groups that arise from $\mu$-ordinary local Shimura data of Hodge type. There are two main ingredients for our theory, namely
\begin{enumerate}[label=(\alph*)]
\item\label{serre-tate ingredient slope filtration} existence of a ``slope filtration'' which admits a unique lifting over deformation rings;
\item\label{serre-tate ingredient canonical deformation} existence of  a ``canonical deformation''. 
\end{enumerate}
We prove \ref{serre-tate ingredient slope filtration} by applying Theorem \ref{existence of hodge newton decomp in char p} and Theorem \ref{lifting of hodge newton filtration} to $\mu$-ordinary local Shimura data of Hodge type. To prove \ref{serre-tate ingredient canonical deformation}, we first embed our deformation space into a deformation space that arises from an EL realization of our local Shimura datum (cf. the proof of Theorem \ref{lifting of hodge newton filtration}), then use the existence of a canonical deformation in the latter space proved by Moonen in \cite{Moonen04}. 

Throughout this section, we will assume that $p>2$.

\subsection{The slope filtration of $\mu$-ordinary $p$-divisible groups}$ $

\subsubsection{}\label{notations for serre-tate theory}
Let us first fix some notations for this section. We fix a $\mu$-ordinary unramified local Shimura datum of Hodge type $(G, [b], \{\mu\})$. We assume that $\{\mu\}$ is minuscule, and take a unique dominant representative $\mu \in \{\mu\}$. Then we have $[b] = [\mu(p)]$ by definition of $\mu$-ordinariness, so we may take $b=\mu(p)$ and write $\generalhodge{\BT}$ for the $p$-divisible group over $k$ with $G$-structure that arises from this choice $b \in [b] \cap G(\witt)\mu(p)G(\witt)$. 
%Let $m$ be the size of the orbit of $\mu$ under the action of $\sigma$, i.e., $m$ is the minimum positive integer such that $\sigma^m(\mu) = \mu$. 
Let $m$ be a positive integer such that $\sigma^m(\mu) = \mu$, and take $\levi$ to be the centralizer of $m \cdot \bar{\mu}$ in $G$ which is a Levi subgroup (see \cite{SGA3}, Exp. XXVI, Cor. 6.10.). We set $\parabolic$ to be a proper standard parabolic subgroup of $G$ with Levi factor $\levi$.

\subsubsection{}\label{slope decomposition and slope filtration for hodge type}
One can check that $(G, [b], \{\mu\})$ is Hodge-Newton reducible with respect to $\parabolic$ and $\levi$ (see \cite{Wortmann13}, Proposition 7.4.). Hence Theorem \ref{existence of hodge newton decomp in char p} gives us the Hodge-Newton decomposition associated to $\parabolic$ and $\levi$
\begin{equation}\label{slope decomposition for hodge type} \generalhodge{\BT} = \generalhodge{\BT}_1 \times \generalhodge{\BT}_2 \times \cdots \times \generalhodge{\BT}_r \end{equation}
which we call the \emph{slope decomposition} of $\generalhodge{\BT}$. If we set
\begin{equation*} \BT_{a, b} := \prod_{s=a}^b \BT_s\end{equation*}
for $1 \leq a \leq b \leq r$, we obtain the induced Hodge-Newton filtration
\begin{equation}\label{slope filtration for hodge type} 0 \subset \BT_{r, r} \subset \BT_{r-1, r} \subset \cdots \subset \BT_{1, r} = \BT,\end{equation}
which we refer to as the \emph{slope filtration} of $\generalhodge{\BT}$. 

Now Theorem \ref{lifting of hodge newton filtration} readily gives us the first main ingredient of the theory, namely the unique lifting of the slope filtration.

\begin{prop}\label{lifting of the slope filtration for hodge type} Let $\genring$ be a $\witt$-algebra of the form $\genring = \witt[[u_1, \cdots, u_N]]$ or $\genring = \witt[[u_1, \cdots, u_N]]/(p^m)$. Let $\deform{\generalhodge{\BT}}$ be a deformation of $\generalhodge{\BT}$ over $\genring$ with an isomorphism $\alpha: \deform{\generalhodge{\BT}} \otimes_\genring k \cong \generalhodge{\BT}$. Then there exists a unique filtration of $\deform{\BT}$
\[ 0 \subset \deform{\BT}_{r, r} \subset \deform{\BT}_{r-1, r} \subset \cdots \subset \deform{\BT}_{1, r} = \deform{\BT} \]
which lifts the slope filtration \eqref{slope filtration for hodge type} in the sense that $\alpha$ induces isomorphisms $\deform{\BT}_{s, r} \otimes_\genring k \cong \BT_{s,r}$ and $\generalhodge{\deform{\BT}_{s, r}/\deform{\BT}_{s+1, r}} \otimes_\genring k \cong \generalhodge{\BT}_s$ for $s = 1, 2, \cdots, r$. 
\end{prop}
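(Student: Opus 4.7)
The plan is to deduce this proposition as an immediate corollary of Theorem \ref{lifting of hodge newton filtration}. The essential point is that once we identify the slope decomposition and slope filtration with the Hodge-Newton decomposition and filtration associated with the parabolic $\parabolic$ and Levi $\levi$ fixed in \ref{notations for serre-tate theory}, the result follows by applying the general lifting theorem already established.

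More concretely, I would proceed as follows. First, I would recall that by \cite{Wortmann13}, Proposition 7.4 (as cited in \ref{slope decomposition and slope filtration for hodge type}), the $\mu$-ordinary datum $(G, [b], \{\mu\})$ is Hodge-Newton reducible with respect to the parabolic $\parabolic$ and its Levi factor $\levi$ equal to the centralizer of $m \cdot \bar{\mu}$. Next I would observe that, by definition, the slope decomposition \eqref{slope decomposition for hodge type} is precisely the Hodge-Newton decomposition of $\generalhodge{\BT}$ associated to $\parabolic$ and $\levi$ produced by Theorem \ref{existence of hodge newton decomp in char p}, and consequently the slope filtration \eqref{slope filtration for hodge type} coincides with the Hodge-Newton filtration of $\generalhodge{\BT}$ in the sense of \ref{definition of hodge newton filtration}.

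With these identifications in place, the hypotheses of Theorem \ref{lifting of hodge newton filtration} are satisfied: we have $p>2$, $\{\mu\}$ is minuscule, the base $\genring$ is of the required form $\witt[[u_1, \ldots, u_N]]$ or $\witt[[u_1, \ldots, u_N]]/(p^m)$, and $\deform{\generalhodge{\BT}}$ is a deformation of $\generalhodge{\BT}$ over $\genring$. Applying Theorem \ref{lifting of hodge newton filtration} then produces a unique filtration
\[ 0 \subset \deform{\BT}_{r, r} \subset \deform{\BT}_{r-1, r} \subset \cdots \subset \deform{\BT}_{1, r} = \deform{\BT} \]
lifting the Hodge-Newton filtration \eqref{slope filtration for hodge type}, with the property that $\alpha$ induces the prescribed isomorphisms on the graded pieces (including compatibility with the $\levi_s$-structure tensors on each $\generalhodge{\BT}_s$). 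This is exactly the statement to be proved.

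Since this is effectively a specialization of a theorem proved earlier, there is no substantive obstacle to overcome; the only step requiring any care is verifying that the specific $\parabolic$ and $\levi$ chosen in the $\mu$-ordinary setting indeed satisfy Hodge-Newton reducibility, which is precisely what the cited result of Wortmann provides. Consequently the proof should be short and amount to little more than assembling the references and citing Theorem \ref{lifting of hodge newton filtration}.
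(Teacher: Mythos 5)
Your proposal is correct and matches the paper's approach exactly: the paper proves this proposition with the single line that it is an immediate consequence of Theorem \ref{lifting of hodge newton filtration}, relying on the identification of the slope filtration with the Hodge-Newton filtration (via Wortmann's result for Hodge-Newton reducibility of the $\mu$-ordinary datum) already set up in \ref{slope decomposition and slope filtration for hodge type}. You have simply spelled out the specialization in slightly more detail.
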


\begin{proof} This is an immediate consequence of Theorem \ref{lifting of hodge newton filtration}. \end{proof}

\subsection{The canonical deformation of $\mu$-ordinary $p$-divisible groups}

\subsubsection{}
We now aim to find the canonical deformation $\deform{\generalhodge{\BT}}^\can$ of $\generalhodge{\BT}$ over $\witt$, which 
%is characterized by 
has the property that all endomorphisms of $\generalhodge{\BT}$ lifts to endomorphisms of $\deform{\generalhodge{\BT}}^\can$. When $G$ is of EL type, we already know existence of such a deformation thanks to the work of Moonen in \cite{Moonen04}. Our strategy is to deduce existence of $\deform{\generalhodge{\BT}}^\can$ from Moonen's result by means of an EL realization of the datum $(G, [b], \{\mu\})$. 

The following lemma is crucial for our strategy.

\begin{lemma}\label{EL realization of mu-ordinariness}
Let $(\EL{G}, [b], \{\mu\})$ be an EL realization of the datum $(G, [b], \{\mu\})$. Then $(\EL{G}, [b], \{\mu\})$ is $\mu$-ordinary. 
\end{lemma}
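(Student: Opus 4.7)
The plan is to reduce the statement to the compatibility property of the embedding $G \hookrightarrow \EL{G}$ with the Frobenius action on cocharacters, which was the key output of Lemma \ref{EL realization of hodge-newton type}, and then invoke functoriality of the Newton map.

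First, I would translate $\mu$-ordinariness into the numerical form established in \ref{mu-ordinariness for isocrystals}: since $(G, [b], \{\mu\})$ is $\mu$-ordinary, we have the equality $\newtonmap_G([b]) = \bar{\mu}_G$ in $\newtonset(G)$. The goal is to prove the corresponding equality $\newtonmap_{\EL{G}}([b]) = \bar{\mu}_{\EL{G}}$ in $\newtonset(\EL{G})$, which by definition is precisely the $\mu$-ordinariness of $(\EL{G}, [b], \{\mu\})$; note that we already know $(\EL{G}, [b], \{\mu\})$ is a valid unramified local Shimura datum (condition \ref{unram loc shimura datum condition 1'} follows from the analogous condition for $G$ via the embedding, and condition \ref{unram loc shimura datum condition 2'} follows from the inclusion $G \hookrightarrow \EL{G} \hookrightarrow \GL(\module)$).

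Next I would apply the two functorialities at our disposal. By functoriality of the Newton map (see \ref{def of newton map and kottwitz map} and the remark following \ref{def of newton set}), the closed embedding $\iota : G \hooklongrightarrow \EL{G}$ induces an order-preserving map of Newton sets $\iota_* : \newtonset(G) \to \newtonset(\EL{G})$ satisfying
\[\newtonmap_{\EL{G}}([b]) = \iota_*\bigl(\newtonmap_G([b])\bigr).\]
On the other hand, $\iota_*$ is induced by the map $\cocharacters(T)_\Q/\weylgroup \to \cocharacters(\EL{T})_\Q/\weylgroup_{\EL{G}}$ coming from a compatible choice of maximal tori $T \subseteq \EL{T}$. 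The crucial point, already verified in the last paragraph of the proof of Lemma \ref{EL realization of hodge-newton type}, is that $\iota$ is compatible with the Frobenius actions on cocharacters. Consequently, for the dominant representative $\mu \in \{\mu\}$ and any $m$ with $\sigma^m(\mu) = \mu$, we have
\[\iota_*(\bar{\mu}_G) = \iota_*\!\left(\tfrac{1}{m}\sum_{i=0}^{m-1} \sigma^i(\mu)\right) = \tfrac{1}{m}\sum_{i=0}^{m-1} \sigma^i(\iota\circ\mu) = \bar{\mu}_{\EL{G}}.\]

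Combining these two observations gives
\[\newtonmap_{\EL{G}}([b]) \;=\; \iota_*\bigl(\newtonmap_G([b])\bigr) \;=\; \iota_*(\bar{\mu}_G) \;=\; \bar{\mu}_{\EL{G}},\]
which is the desired $\mu$-ordinariness. I do not anticipate a serious obstacle here, since the substantive work is already packaged in Lemma \ref{EL realization of hodge-newton type}; the only thing to be careful about is selecting the Borel pairs $(B, T) \subseteq (\EL{B}, \EL{T})$ coherently so that dominance and the $\sigma$-averaging both transfer cleanly under $\iota_*$. The map $\iota_*$ need not be injective, but this is irrelevant: we only need to push an equality forward, not detect one.
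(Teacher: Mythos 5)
Your proposal is correct and follows essentially the same argument as the paper: reduce $\mu$-ordinariness to the equality $\newtonmap_G([b]) = \bar{\mu}_G$, push it forward along the map $\newtonset(G) \to \newtonset(\EL{G})$ using functoriality of the Newton map, and invoke the $\sigma$-compatibility of the embedding (established in the proof of Lemma \ref{EL realization of hodge-newton type}) to see that $\bar{\mu}_G$ maps to $\bar{\mu}_{\EL{G}}$. The only difference is that you spell out the $\sigma$-averaging computation, which the paper leaves implicit by reference.
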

\begin{proof}
Consider the map on the Newton sets
\[\newtonset(G) \longrightarrow \newtonset(\EL{G})\]
induced by the embedding $G \hooklongrightarrow \EL{G}$. It maps $\bar{\mu}_G$ to $\bar{\mu}_{\EL{G}}$ by the proof of Lemma \ref{EL realization of hodge-newton type}, and $\newtonmap_G([b])$ to $\newtonmap_{\EL{G}}([b])$ by the functoriality of the Newton map. On the other hand, we have $\newtonmap_G([b]) = \bar{\mu}_G$ since $(G, [b], \{\mu\})$ is $\mu$-ordinary. Hence we deduce that $\newtonmap_{\EL{G}}([b]) = \bar{\mu}_{\EL{G}}$ which implies the assertion. 
\end{proof}

\subsubsection{} Let us now fix an EL realization $(\EL{G}, [b], \{\mu\})$ of the datum $(G, [b], \{\mu\})$. Then $(\EL{G}, [b], \{\mu\})$ is Hodge-Newton reducible with respect to some parabolic subgroup $\EL{\parabolic}$ of $\EL{G}$ with Levi factor $\EL{\levi}$ such that $\parabolic = \EL{\parabolic} \cap G$ and $\levi = \EL{\levi} \cap G$. In fact, since $\levi$ is the centralizer of $m \cdot \bar{\mu}$ in $G$, we may take $\EL{\parabolic}$ such that $\EL{\levi}$ is the centralizer of $m \cdot \bar{\mu}$ in $\EL{G}$. As in \ref{construction of sub local shimura data for hodge newton reducibility}, we assume for simplicity that $\EL{G}$ is of the form
\[\EL{G}:= \ELgroupintegral\] 
where $\integerring$ is the ring of integers of some finite unramified extension of $\Q_p$. Then $\EL{\levi}$ takes the form 
\begin{equation}\label{levi slope decomposition EL type}\EL{\levi} = \ELgroupintegral[j_1] \times \ELgroupintegral[j_2] \times \cdots \times \ELgroupintegral[j_r]. \end{equation}

We define $\EL{\levi}_j, \levi_j, b_j, \mu_j$ as in Theorem \ref{existence of hodge newton decomp in char p}. Then by the proof of Theorem \ref{existence of hodge newton decomp in char p} we have the following facts:
\begin{enumerate}
\item The tuples $(\levi_j, [b_j], \{\mu_j\})$ and $(\EL{\levi}_j, [b_j], \{\mu_j\})$ are unramified Shimura data of Hodge type,
\item Each factor $\generalhodge{\BT}_j$ in the slope decomposition \eqref{slope decomposition for hodge type} arises from the  datum $(\levi_j, [b_j], \{\mu_j\})$ with the choice $b_j \in [b_j]$. 
\end{enumerate}

Let $\EL{\BT}$ be the $p$-divisible group over $k$ with $\integerring$-module structure that arises from the datum $(\EL{G}, [b], \{\mu\})$ with the choice $b \in [b]$. It admits the Hodge-Newton decomposition
\begin{equation}\label{slope decomposition EL type} \EL{\BT} = \EL{\BT}_1 \times \EL{\BT}_2 \times \cdots \times \EL{\BT}_r\end{equation}
which gives rise to the slope decomposition \eqref{slope decomposition for hodge type} of $\generalhodge{\BT}$. By Lemma \ref{EL realization of mu-ordinariness}, the Newton polygon $\newtonmap_{\EL{G}}([b])$ and the $\sigma$-invariant Hodge polygon $\bar{\mu}_{\EL{G}}$ of $\EL{\BT}$ coincide. Since $\EL{\levi}$ is the centralizer of $m \cdot \bar{\mu}$ in $\EL{G}$, each factor in the decompositions \eqref{levi slope decomposition EL type} and \eqref{slope decomposition EL type} corresponds to a unique slope in the polygon $\bar{\mu}_{\EL{G}} = \newtonmap_{\EL{G}}([b])$. Hence the decomposition \eqref{slope decomposition EL type} is in fact the slope decomposition of $\EL{\BT}$.

\begin{prop}\label{rigidity for isoclinic objects hodge type} Each factor $\generalhodge{\BT}_j$ in the slope decomposition \eqref{slope decomposition for hodge type} is rigid, i.e., $\Def_{\BT_j, \levi_j}$ is pro-represented by $\witt$. 
\end{prop}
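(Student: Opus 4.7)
The plan is to reduce to Moonen's rigidity result for isoclinic $\mu$-ordinary $p$-divisible groups of EL type \cite{Moonen04}, by exploiting the EL realization $(\EL{G}, [b], \{\mu\})$ fixed above. Fix $j \in \{1, \ldots, r\}$ and consider the induced unramified local Shimura datum $(\EL{\levi}_j, [b_j], \{\mu_j\})$.

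First, I would verify that this datum is $\mu$-ordinary and isoclinic. Arguing as in Lemma \ref{EL realization of mu-ordinariness} and using the functoriality of $\newtonmap$ and $\bar{\mu}$ along the projection $\EL{\levi} \twoheadrightarrow \EL{\levi}_j$, one obtains $\newtonmap_{\EL{\levi}_j}([b_j]) = \bar{\mu}_{\EL{\levi}_j}$, so $\EL{\BT}_j$ is $\mu$-ordinary. Moreover, because $\EL{\levi}$ was chosen as the centralizer of $m \cdot \bar{\mu}$ in $\EL{G}$, the decomposition \eqref{levi slope decomposition EL type} is designed so that each factor $\EL{\levi}_j$ isolates exactly one slope of $\bar{\mu}_{\EL{G}}$; hence $\bar{\mu}_{\EL{\levi}_j}$ has a single slope and $\EL{\BT}_j$ is isoclinic.

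Next, I would invoke Moonen's Serre--Tate theory for $\mu$-ordinary $p$-divisible groups of EL type \cite{Moonen04}. In Moonen's description, the formal deformation space is built from Lubin--Tate formal groups indexed by pairs of distinct slopes; in the isoclinic case (no such pairs) the formal group is trivial, so $\Def_{\EL{\BT}_j, \EL{\levi}_j}$ is pro-represented by $\witt$.

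Finally, identifying the underlying $p$-divisible group of $\generalhodge{\BT}_j$ with that of $\EL{\BT}_j$ (both arise from the same $b_j$ and $\mu_j$, and differ only in their tensor structure), I would apply Lemma \ref{functoriality of deformation spaces}\eqref{hom functoriality of deformation spaces} to the closed embedding $\levi_j \hookrightarrow \EL{\levi}_j$. Together with the description of $\Def_{\BT_j, \levi_j}$ as a closed formal subscheme of $\Def_{\BT_j}$ in Proposition \ref{universal deformation of a p-divisible group with tate tensors}, this gives a closed embedding
\[ \Def_{\BT_j, \levi_j} \hooklongrightarrow \Def_{\BT_j, \EL{\levi}_j} \cong \Spf \witt, \]
which forces $\Def_{\BT_j, \levi_j} \cong \Spf \witt$. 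The main obstacle I expect is extracting the precise isoclinic rigidity statement from \cite{Moonen04} in a form directly applicable to $\Def_{\EL{\BT}_j, \EL{\levi}_j}$ in our setup; once this is in hand, the remainder is formal.
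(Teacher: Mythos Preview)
Your proposal is correct and follows essentially the same route as the paper: establish that $\EL{\BT}_j$ is $\mu$-ordinary with a single slope, invoke Moonen's rigidity for isoclinic $\mu$-ordinary objects of EL type, and then use the closed embedding $\Def_{\BT_j, \levi_j} \hookrightarrow \Def_{\BT_j, \EL{\levi}_j}$ from Lemma~\ref{functoriality of deformation spaces}. The precise reference you were worried about extracting is \cite{Moonen04}, Corollary~2.1.5, which is exactly what the paper cites.
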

\begin{proof} 
Note that $\EL{\BT}_j$ arises from the datum $(\EL{\levi}_j, [b_j], \{\mu_j\})$ with the choice $b_j \in [b_j]$ (see the proof of Theorem \ref{existence of hodge newton decomp in char p}). It corresponds to a unique slope in the polygon $\bar{\mu}_{\EL{G}} = \newtonmap_{\EL{G}}([b])$, so it is $\mu$-ordinary with single slope. By \cite{Moonen04}, Corollary 2.1.5, its deformation space $\Def_{\BT_j, \EL{\levi}_j}$ is pro-represented by $\witt$. Now the assertion follows from the closed embedding of deformation spaces
\[ \Def_{\BT_j, \levi_j} \hooklongrightarrow \Def_{\BT_j, \EL{\levi}_j}\]
induced by the embedding $\levi_j \hooklongrightarrow \EL{\levi}_j$ (Lemma \ref{functoriality of deformation spaces}). 
\end{proof}

%\subsubsection{}\label{canonical deformation hodge type}
Let $\deform{\generalhodge{\BT}}_j^\can$ be the universal deformation of $\generalhodge{\BT}_j$ in the sense of Proposition \ref{universal deformation of a p-divisible group with tate tensors}. Proposition \ref{rigidity for isoclinic objects hodge type} says that $\deform{\generalhodge{\BT}}_j^\can$ is defined over $\witt$. Hence for any formally smooth $\witt$-algebra $\genring$ of the form $\genring = \witt[[u_1, \cdots, u_N]]$ or $\genring = \witt[[u_1, \cdots, u_N]]/(p^m)$, there exists a unique deformation of $\generalhodge{\BT}_j$ over $\genring$, namely $\deform{\generalhodge{\BT}}_j^\can \otimes_{\witt} \genring$. 

We define the \emph{canonical deformation} of $\generalhodge{\BT}$ to be a deformation of $\generalhodge{\BT}$ over $\witt$ given by
\[ \deform{\generalhodge{\BT}}^\can := \deform{\generalhodge{\BT}}_1^\can \times \deform{\generalhodge{\BT}}_2^\can \times \cdots \times \deform{\generalhodge{\BT}}_r^\can.\]
It is clear from this construction that all endomorphisms of $\generalhodge{\BT}$ lifts to endomorphisms of $\deform{\generalhodge{\BT}}^\can \otimes_{\witt} \genring$ for any formally smooth $\witt$-algebra $\genring$ of the form $\genring = \witt[[u_1, \cdots, u_N]]$ or $\genring = \witt[[u_1, \cdots, u_N]]/(p^m)$.

%\begin{prop}\label{canonical deformation lifting endomorphisms}
%Let $\deform{\generalhodge{\BT}}$ is a deformation of $\generalhodge{\BT}$ over a formally smooth $\witt$-algebra $\genring$ of the form $\genring = \witt[[u_1, \cdots, u_N]]$ or $\genring = \witt[[u_1, \cdots, u_N]]/(p^m)$. Then the map $\End_\genring(\deform{\generalhodge{\BT}}) \longrightarrow \End_k(\generalhodge{\BT})$ is an isomorphism if and only if $\deform{\generalhodge{\BT}} \cong \deform{\generalhodge{\BT}}^\can \otimes_{\witt} \genring$. 

%In other words, $\deform{\generalhodge{\BT}}^\can$ is the unique deformation of $\generalhodge{\BT}$  which lifts all endomorphisms. 
%\end{prop}

%\begin{proof}
%It is clear from the construction of $\deform{\generalhodge{\BT}}^\can$ that all endomorphisms of $\generalhodge{\BT}$ lifts to endomorphisms of $\deform{\generalhodge{\BT}}^\can \otimes_{\witt} \genring$. 

%For the converse, we assume that $\End_\genring(\deform{\generalhodge{\BT}}) \longrightarrow \End_k(\generalhodge{\BT})$ is an isomorphism. Note first that
%\[ \End_k(\EL{\BT}) \simeq \End_k(\EL{\BT}_1) \times \End_k(\EL{\BT}_2) \times \cdots \times \End_k(\EL{\BT}_r)\]
%(cf. \cite{Moonen04}, Lemma 1.3.11.)
%\end{proof}

\subsection{Structure of deformation spaces}$ $

\subsubsection{} 

When $r=1$, we have $\Def_{\BT, G} \simeq \Spf (\witt)$ by Proposition \ref{rigidity for isoclinic objects hodge type}. 

Let us now consider the case $r=2$. Then we have the slope decompositions
\[ \generalhodge{\BT} = \generalhodge{\BT}_1 \times \generalhodge{\BT}_2 \quad \text{and} \quad \EL{\BT} = \EL{\BT}_1 \times \EL{\BT}_2.\]
Let $(d_s, \ELf_s)$ be the type of $\EL{\BT}_s$ for $s \in \{1, 2\}$ (see Example \ref{BT with O-module structure} for definition). Define a function $\ELf' : \ELgalois \to \{0, 1\}$ by
\[\ELf'(i) = \begin{cases} 0 \quad \text{if } \ELf_1(i) = \ELf_2(i) = 0;\\ 0 \quad \text{if } \ELf_1(i) = d_1 \text{ and } \ELf_2(i) = d_2; \\ 1 \quad \text{if } \ELf_1(i) = 0 \text{ and } \ELf_2(i) = d_2. \end{cases}\]
As noted in Example \ref{BT with O-module structure} for definition, there exists a unique isomorphism class of $\mu$-ordinary $p$-divisible group over $k$ with $\integerring$-module structure of type $(1, \ELf')$. We let $\EL{\deform{\BT}}^\can(1, \ELf')$ denote its canonical lifting.

\begin{theorem} \label{serre tate theorem for hodge type with two slopes} Notations above. The deformation space $\Def_{\BT, G}$ has a natural structure of a $p$-divisible group over $\witt$. More precisely, we have an isomorphism 
\[ \Def_{\BT, G} \cong \EL{\deform{\BT}}^\can(1, \ELf')^{d'}\]
as $p$-divisible groups over $\witt$ with $\integerring$-structure for some integer $d' \leq d_1 d_2$. 
\end{theorem}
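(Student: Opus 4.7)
The plan is to identify $\Def_{\BT, G}$ as a closed $p$-divisible subgroup of $\Def_{\BT, \EL{G}}$, where Moonen's work already equips the latter with the desired structure.

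First, I would unpack what deformations of $\generalhodge{\BT}$ look like. By Proposition \ref{lifting of the slope filtration for hodge type}, every deformation $\deform{\generalhodge{\BT}}$ of $\generalhodge{\BT}$ over an allowed $\witt$-algebra $\genring$ carries a unique lift $0 \subset \deform{\BT}_{2} \subset \deform{\BT}$ of the slope filtration, and Proposition \ref{rigidity for isoclinic objects hodge type} forces each graded piece to be the base change of the canonical deformation $\deform{\generalhodge{\BT}}_s^\can$. Hence $\Def_{\BT, G}$ is identified with the functor of extensions of $\deform{\generalhodge{\BT}}_1^\can$ by $\deform{\generalhodge{\BT}}_2^\can$ whose underlying extension of $p$-divisible groups preserves the tensors $(t_i)$. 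The same reasoning applied to the EL realization identifies $\Def_{\BT, \EL{G}}$ with extensions of $\EL{\deform{\BT}}_1^\can$ by $\EL{\deform{\BT}}_2^\can$ with $\integerring$-module structure, and Moonen's theorem (\cite{Moonen04}) then says that $\Def_{\BT, \EL{G}}$ is a $p$-divisible group over $\witt$ isomorphic to $\EL{\deform{\BT}}^\can(1, \ELf')^{d_1 d_2}$; its group law is the Baer sum on extensions.

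Second, the closed embedding $\Def_{\BT, G} \hookrightarrow \Def_{\BT, \EL{G}}$ from Lemma \ref{functoriality of deformation spaces} realizes our deformation space inside this $p$-divisible group. The crucial step will be to show that $\Def_{\BT, G}$ is closed under the Baer sum. In Faltings's coordinates (Section \ref{construction of universal deformation}), deformations correspond to points of $\completion{\unipotent}_{\EL{G}}^\mu = \Spf \genring_{\EL{G}}^\mu$, and our subspace corresponds to the closed formal subscheme $\completion{\unipotent}_{G}^\mu = \Spf \genring_G^\mu$. In the $\mu$-ordinary two-slope setting, $\levi$ is the centralizer of $m\cdot\bar{\mu}$, so $\unipotent^\mu$ (with respect to the parabolic stabilizing the Hodge filtration) is abelian; in particular $\completion{\unipotent}_{\EL{G}}^\mu$ is a commutative formal group, and $\completion{\unipotent}_G^\mu$ is a closed formal subgroup of it. I would then check that, under the identification of the deformation functor with $\Ext^1$ of the two slope pieces, the Baer sum translates into addition in $\completion{\unipotent}_{\EL{G}}^\mu$ (via the explicit lift of Frobenius $F_{\deform{\dieudonnemodule}}=u_t\circ(F\otimes\sigma)$). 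The $G$-tensor condition on the lift is, by construction, exactly the condition that $u_t$ factors through $\completion{\unipotent}_{G}^\mu$, which is closed under this addition.

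Third, granting that $\Def_{\BT, G}$ is a closed $\integerring$-stable $p$-divisible subgroup of $\EL{\deform{\BT}}^\can(1, \ELf')^{d_1 d_2}$, I would conclude by noting that $\EL{\deform{\BT}}^\can(1, \ELf')$ is a Lubin-Tate type formal $\integerring$-module (attached to the character $\ELf'$) and is $\integerring$-isotypic up to isogeny. Any closed $\integerring$-stable $p$-divisible subgroup of its $d_1 d_2$-fold power is therefore isomorphic to $\EL{\deform{\BT}}^\can(1, \ELf')^{d'}$ for some $d' \leq d_1 d_2$, with $d'$ determined by the dimension of $\completion{\unipotent}_G^\mu$.

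The main obstacle will be the matching of the Baer sum with the additive structure on $\completion{\unipotent}_{\EL{G}}^\mu$ and, simultaneously, verifying that the tensors $(t_i)$ behave linearly under this operation. This is finer than what was used in the proof of Theorem \ref{lifting of hodge newton filtration}: there we only needed that a single deformation lifts the tensors, whereas here we need compatibility of the tensor-lifting condition with the group law. The key input should be Proposition \ref{universal deformation of a p-divisible group with tate tensors}, which characterizes the $G$-subspace precisely by the lifting of the $t_i$ and allows one to transport this characterization through the Baer sum.
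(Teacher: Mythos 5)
Your overall framework matches the paper — embed $\Def_{\BT, G}$ into $\Def_{\BT, \EL{G}}$, use Moonen's result to identify the latter with a power of a Lubin–Tate group via the Baer sum on $\Ext(\deform{\EL{\BT}}_1^\can, \deform{\EL{\BT}}_2^\can)$, and then show $\Def_{\BT, G}$ is a sub-$p$-divisible group. But the key step where you want to show closure under the Baer sum has a genuine gap, and I believe the specific claim you lean on is actually false.

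You propose to identify the Baer sum with the additive formal group law on $\completion{\unipotent}_{\EL{G}}^\mu$ (which is commutative because $\mu$ is minuscule) and then use that $\completion{\unipotent}_G^\mu$ is a closed formal \emph{subgroup} for that law. The problem is that the Baer sum and the additive law coming from the unipotent group multiplication are two different group structures on the same formal scheme. Indeed, $\completion{\unipotent}_{\EL{G}}^\mu$ with its additive law is a power of $\completion{\mathbb{G}_a}$ over $\witt$, and $\completion{\mathbb{G}_a}$ is not a $p$-divisible group (multiplication by $p$ has infinite-type kernel). But Moonen's Theorem 2.3.3 — which you invoke — says that the Baer-sum structure makes $\Def_{\BT, \EL{G}}$ a genuine $p$-divisible group isomorphic to $\EL{\deform{\BT}}^\can(1,\ELf')^{d_1 d_2}$. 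These cannot be the same group law. (In the classical ordinary $\GL_2$ case this is the well-known fact that the Serre–Tate parameter $q$, in which the Baer sum is multiplicative, is not linearly related to the Faltings coordinate $u$.) Consequently, the fact that $\completion{\unipotent}_G^\mu \subset \completion{\unipotent}_{\EL{G}}^\mu$ is a subgroup for the additive law does not tell you it is stable under the Baer sum, and your argument does not close.

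The paper sidesteps this entirely: it never compares the Baer sum to the unipotent group law. Instead it takes two deformations $\deform{\generalhodge{\BT}}$, $\deform{\generalhodge{\BT}}'$ in $\Def_{\BT, G}$, forms their Baer sum $\deform{\BT} \odot \deform{\BT}'$ inside $\Def_{\BT, \EL{G}}$, and shows directly that the Baer sum again lifts the $G$-tensors. The trick is to work with the \emph{full} family $(\mathfrak{s}_j)$ of all $G$-invariant tensors on $\module$ (so the argument is independent of the particular finite family $(t_i)$ chosen), observe that the tensor lifts $(\bar{\mathfrak{t}}_j)$ on $\deform{\generalhodge{\BT}}$ and $(\bar{\mathfrak{t}}'_j)$ on $\deform{\generalhodge{\BT}}'$ necessarily agree on the common sub- and quotient pieces of the slope filtration, and hence descend along the pullback/pushout defining the Baer sum to tensors on $\deform{\BT} \odot \deform{\BT}'$ lifting $(\mathfrak{t}_j)$. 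That gives $\deform{\BT} \odot \deform{\BT}' \in \Def_{\BT, G}(\genring)$ directly, after which the finiteness of $p$-torsion is inherited from the closed embedding into $\Def_{\BT, \EL{G}}$, and the dimension count gives $d'$. Your final step (an $\integerring$-stable $p$-divisible subgroup of $\EL{\deform{\BT}}^\can(1,\ELf')^{d_1 d_2}$ is a power $\EL{\deform{\BT}}^\can(1,\ELf')^{d'}$) matches the paper, but it only becomes available once the subgroup property is established by the tensor argument rather than via the unipotent coordinates.
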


\begin{proof}
Consider the category $\deformrings$ of artinian local $\witt$-algebra with residue field $k$. Let $\deform{\EL{\BT}}_j^\can$ denote the canonical deformation of $\EL{\BT}_j$ for $j=1, 2$. We define the functor 
\[\Ext(\deform{\EL{\BT}}_1^\can, \deform{\EL{\BT}}_2^\can): \deformrings \to \textbf{Sets}\] 
by setting $\Ext(\deform{\EL{\BT}}_1^\can, \deform{\EL{\BT}}_2^\can)(\genring)$ to be the set of isomorphism classes of extensions of $\deform{\EL{\BT}}_j^\can \otimes_{\witt} \genring$ by $\deform{\EL{\BT}}_2^\can \otimes_\witt \genring$ as fppf sheaves of $\integerring$-module. 

By \cite{Moonen04}, Theorem 2.3.3, we have the following isomorphisms:
\begin{enumerate}[label=(\alph*)]
\item\label{EL deform space isom to Ext} $\Def_{\BT, \EL{G}} \cong \Ext(\deform{\EL{\BT}}_1^\can, \deform{\EL{\BT}}_2^\can)$ as smooth formal groups over $\witt$,
\item\label{EL deform space isom to formal torus} $\Def_{\BT, \EL{G}} \cong \EL{\deform{\BT}}^\can(1, \ELf')^{d_1 d_2}$ as $p$-divisible groups over $\witt$ with $\integerring$-module structure. 
\end{enumerate}
On the other hand, by Lemma \ref{functoriality of deformation spaces} we have a closed embedding of deformation spaces
\begin{equation}\label{embedding of deformation space to EL realization} \Def_{\BT, G} \hooklongrightarrow \Def_{\BT, \EL{G}}.\end{equation}

Our first task is to show that $\Def_{\BT, G}$ is a subgroup of $\Def_{\BT, \EL{G}}$ with $\integerring$-module structure. Let $\genring$ be a smooth formal $\witt$-algebra of the form $\genring = \witt[[u_1, \cdots, u_N]]$ or $\genring = \witt[[u_1, \cdots, u_N]]/(p^m)$, and take two arbitrary deformations $\deform{\generalhodge{\BT}}$ and $\deform{\generalhodge{\BT}}'$ of $\generalhodge{\BT}$ over $\genring$. By Proposition \ref{lifting of the slope filtration for hodge type}, we have exact sequences
\begin{align*} 
0 \longrightarrow \deform{\generalhodge{\BT}}_1^\can  \otimes_\witt \genring \longrightarrow \deform{\generalhodge{\BT}} \longrightarrow \deform{\generalhodge{\BT}}_2^\can  \otimes_\witt \genring \longrightarrow 0, \\
0 \longrightarrow \deform{\generalhodge{\BT}}_1^\can  \otimes_\witt \genring \longrightarrow \deform{\generalhodge{\BT}}' \longrightarrow \deform{\generalhodge{\BT}}_2^\can  \otimes_\witt \genring \longrightarrow 0.
\end{align*}
We denote by $\deform{\BT} \odot \deform{\BT}'$ the underlying $p$-divisible group of their Baer sum taken in $\Ext(\deform{\EL{\BT}}_1^\can, \deform{\EL{\BT}}_2^\can)(\genring)$.

We wish to show that $\deform{\BT} \odot \deform{\BT}' \in \Def_{\BT, G}(\genring)$. By the isomorphism \ref{EL deform space isom to Ext}, we already know that $\deform{\BT} \odot \deform{\BT}' \in \Def_{\BT, \EL{G}}(\genring)$. Hence it remains to show that we have tensors on (the Dieudonn\'e module of) $\deform{\BT} \odot \deform{\BT}'$ which lift the tensors $(t_i)$ on $\generalhodge{\BT}$ in the sense of Proposition \ref{universal deformation of a p-divisible group}. Unfortunately, it is not easy to explicitly find these tensors in terms of the tensors on $\deform{\generalhodge{\BT}}$ and $\deform{\generalhodge{\BT}}'$. Instead, we start with the family of all tensors $(\mathfrak{s}_j)$ on $\module$ which are fixed by $G$. Then we have a family $(\mathfrak{t}_j):= (\mathfrak{s}_j \otimes 1)$ on $\module \otimes_{Z_p} \witt \simeq \dieudonnemodule$. Since the formal deformation space $\Def_{\BT, G}$ is independent of the choice of tensors $(t_i)$, we get tensors $(\bar{\mathfrak{t}}_j)$ on $\deform{\generalhodge{\BT}}$ and $(\bar{\mathfrak{t}}'_j)$ on $\deform{\generalhodge{\BT}}'$ which lift $(\mathfrak{t}_j)$ (in the sense of Proposition \ref{universal deformation of a p-divisible group}). Moreover, the families $(\bar{\mathfrak{t}}_j)$ and $(\bar{\mathfrak{t}}'_j)$ map to the same family of tensors on $\deform{\generalhodge{\BT}}_2^\can$ under the surjections $\deform{\generalhodge{\BT}} \twoheadrightarrow \deform{\generalhodge{\BT}}_2^\can$ and $\deform{\generalhodge{\BT}}'\twoheadrightarrow \deform{\generalhodge{\BT}}_2^\can$. Hence the families $(\bar{\mathfrak{t}}_j)$ and $(\bar{\mathfrak{t}}'_j)$ define the same family of tensors on $\deform{\BT} \odot \deform{\BT}'$ which lift $(\mathfrak{t}_j)$. In particular, there exists a family of tensors on $\deform{\BT} \odot \deform{\BT}'$ which lift $(t_i)$.

Since $\Def_{\BT, \EL{G}}$ has a finite $p$-torsion for being a $p$-divisible group, we observe from the embedding \eqref{embedding of deformation space to EL realization} that $\Def_{\BT, G}$ also has finite finite $p$-torsion. Using the same argument as in the proof of \cite{Moonen04}, Theorem 2.3.3, we deduce that $\Def_{\BT, G}$ is a $p$-divisible group.

Hence $\Def_{\generalhodge{\BT}}$ is a $p$-divisible subgroup of $\Def_{\BT, \EL{G}} \cong \EL{\deform{\BT}}^\can(1, \ELf')^{d_1 d_2}$ with $\integerring$-module structure. 
%Since $\EL{\deform{\BT}}$ is $1$-dimensional as a BT with $\integerring$-structure, 
Now the dimension of $\Def_{\BT, G}$ determines an integer $d'$ such that 
\[ \Def_{\BT, G} \cong \EL{\deform{\BT}}^\can(1, \ELf')^{d'}\]
as $p$-divisible groups over $\witt$ with $\integerring$-module structure. \end{proof}

\begin{remark}
From the proof, one sees that the canonical deformation $\deform{\generalhodge{\BT}}^\can$ corresponds to the identity element in the $p$-divisible group structure of $\Def_{\BT, G}$. 
\end{remark}

\subsubsection{}
We finally consider the case $r \geq 3$. For convenience, we write $\Def_{\EL{\BT}_{a,b}}$ for the deformation space of $\EL{\BT}_{a, b}$. These spaces fit into a diagram
\begin{center}
\begin{tikzpicture}[description/.style={fill=white,inner sep=2pt}]
\matrix (m) [matrix of math nodes, row sep=1.5em,
column sep=0.2em, text height=1.5ex, text depth=0.25ex]
{ & & & \Def_{\EL{\BT}_{1, r}}=\Def_{\BT, \EL{G}} & & & \\
& &\Def_{\EL{\BT}_{1, r-1}} & & \Def_{\EL{\BT}_{2, r}} & & \\
& \Def_{\EL{\BT}_{1, r-2}} & &\Def_{\EL{\BT}_{2, r-1}} & & \Def_{\EL{\BT}_{3, r}}& \\
\cdots & & \cdots & & \cdots & & \cdots \\
};
\path[-stealth]
    (m-1-4) edge (m-2-3)
    (m-1-4) edge (m-2-5)
    (m-2-3) edge (m-3-2)
    (m-2-3) edge (m-3-4)
    (m-2-5) edge (m-3-4)
    (m-2-5) edge (m-3-6)
    (m-3-2) edge (m-4-1.north)
    (m-3-2) edge (m-4-3)
    (m-3-4) edge (m-4-3)
    (m-3-4) edge (m-4-5)
    (m-3-6) edge (m-4-5)
    (m-3-6) edge (m-4-7.north);
\end{tikzpicture}\end{center}
where each map comes from the restriction of the filtration in Proposition \ref{lifting of the slope filtration for hodge type} (see \cite{Moonen04}, 2.3.6.). This diagram carries some additional structures called the \emph{cascade structure}, as described by Moonen in loc. cit.

We denote by $\Def_{\generalhodge{\BT}_{a,b}}$ the pull back of $\Def_{\EL{\BT}_{a,b}}$ over $\Def_{\BT, G}$. Then $\Def_{\generalhodge{\BT}_{a,b}}$ classifies deformations of $\BT_{a, b}$ with a filtration that comes from the filtration of $\deform{\BT}$ in Proposition \ref{lifting of the slope filtration for hodge type}. If we pull back the above diagram over over $\Def_{\BT, G}$, we get another diagram
\begin{center}
\begin{tikzpicture}[description/.style={fill=white,inner sep=2pt}]
\matrix (m) [matrix of math nodes, row sep=1.5em,
column sep=0.2em, text height=1.5ex, text depth=0.25ex]
{ & & & \Def_{\generalhodge{\BT}_{1, r}}=\Def_{\BT, G} & & & \\
& &\Def_{\generalhodge{\BT}_{1, r-1}} & & \Def_{\generalhodge{\BT}_{2, r}} & & \\
& \Def_{\generalhodge{\BT}_{1, r-2}} & &\Def_{\generalhodge{\BT}_{2, r-1}} & & \Def_{\generalhodge{\BT}_{3, r}}& \\
\cdots & & \cdots & & \cdots & & \cdots \\
};
\path[-stealth]
    (m-1-4) edge (m-2-3)
    (m-1-4) edge (m-2-5)
    (m-2-3) edge (m-3-2)
    (m-2-3) edge (m-3-4)
    (m-2-5) edge (m-3-4)
    (m-2-5) edge (m-3-6)
    (m-3-2) edge (m-4-1.north)
    (m-3-2) edge (m-4-3)
    (m-3-4) edge (m-4-3)
    (m-3-4) edge (m-4-5)
    (m-3-6) edge (m-4-5)
    (m-3-6) edge (m-4-7.north);
\end{tikzpicture}\end{center}
where each map comes from the restriction of the filtration in Proposition \ref{lifting of the slope filtration for hodge type}. With similar arguments as in the proof of Theorem \ref{serre tate theorem for hodge type with two slopes}, one can give a group structure on $\Def_{\generalhodge{\BT}_{a, b}}$ over $\Def_{\generalhodge{\BT}_{a, b-1}}$ and $\Def_{\generalhodge{\BT}_{a+1, b}}$ (cf. \cite{Moonen04}, 2.3.6.).  However, this diagram does not carry the full cascade structure in general.
% indeed, there does not necessarily exist a biextension structure of $(\Def_{\generalhodge{\BT}_{a, b-1}}, \Def_{\generalhodge{\BT}_{a+1, b}})$ by $\Def_{\generalhodge{\BT}_a \times \generalhodge{\BT}_b} \times \Def_{\generalhodge{\BT}_{a+1, b-1}}$ on $\Def_{\generalhodge{\BT}_{a, b}}$. 

\section{Congruence relations on Shimura varieties of Hodge type}

In this section, we use our generalization of Serre-Tate deformation theory developed in \S\ref{serre tate theory for hodge type} to study some congruence relations on Shimura varieties of Hodge type. Our proof will closely follow Moonen's proof for PEL case in \cite{Moonen04}, \S4. 

\subsection{Stratification on the special fiber}

\subsubsection{} \label{notations for shimura varieties}

Let us first set up some notations for Shimura varieties of Hodge type. %For a detailed review of the theory, we refer the readers to \cite{Deligne79} or \cite{Milne92}. 

Let $\shimuradatum$ be a Shimura datum of Hodge type. This means that it admits an embedding into a symplectic Shimura datum
\[\shimuradatum \hookrightarrow (\text{GSp}, S^{\pm}).\]  
We fix such an embedding for the rest of this section. 

For each $h \in \mathfrak{H}$, we define the cocharacter $\lambda_h$ of $\charzero{G}_\C$ by 
\[\lambda_h: \C^\times \longrightarrow \C^\times \times c^*(\C^\times) \cong \text{Res}_{\C/\R}(\C) \stackrel{h}{\longrightarrow} G(\C)\]
where $c$ denotes the complex conjugation. We denote by $\{\lambda\inv\}$ the unique $G(\C)$-conjugacy class which contains all $\lambda_h\inv$. Let $\reflexfield$ be the field of definition of $\{\lambda\inv\}$, called the \emph{reflex field} of $\shimuradatum$. 
We write $\reflexring$ for the ring of integers in $\reflexfield$.

We assume that $\charzero{G}$ is connected and of good reduction at $p$. Then $\charzero{G}_{\Q_p}$ is unramified, so we can take a reductive model $G$ of $\charzero{G}$ over $\Z_p$. Moreover, we can choose a $\Z_p$-lattice $\module$ and an embedding $G \hookrightarrow \GL(\module)$ which induces the embedding $\charzero{G}_{\Q_p} \hookrightarrow \text{GSp}_{\Q_p}$ (see \cite{Kisin10}, 2.3.2.). We choose a finite family of tensors $(s_i)$ on $\module$ whose pointwise stabilizer is $G$. We also fix a Borel pair $(\borel, T)$ of $G$ and take $\mu\in \cocharacters(T)$ to be a unique dominant cocharacter such that $\sigma\inv(\mu) \in \{\lambda\inv\}$. 

%For a hyperspecial subgroup $\levelatp$ of $\charzero{G}(\Q_p)$ 
Take $\levelatp := G(\Z_p)$. For a sufficiently small open and compact subgroup $\levelawayp$ of $\charzero{G}(\Afp)$, the double quotient
\[ \Sh_{\levelatp \levelawayp} \shimuradatum:= \charzero{G}(\Q) \backslash \mathfrak{H} \times \charzero{G}(\Af) / \levelatp \levelawayp\]
has a natural structure as a smooth quasi-projective variety over $\C$. Moreover, it has a canonical model over $\reflexfield$, which we also denote by $\Sh_{\levelatp \levelawayp} \shimuradatum$. Then we can define the pro-variety 
\[ \Sh_\levelatp \shimuradatum := \varprojlim_\levelawayp \Sh_{\levelatp \levelawayp} \shimuradatum\]
where the limit is taken over the set of open and compact subgroups of $\charzero{G}(\Afp)$. This is a scheme over $\reflexfield$ with a continuous right action of $\charzero{G}(\Afp)$ as described in \cite{Deligne79}, 2.7.1. or \cite{Milne92}, 2.1.

\subsubsection{} Fix a place $\place$ of $\reflexfield$ over $p$, and let $\localring$ be the localization of $\reflexring$ at $\place$. In \cite{Kisin10}, Kisin constructed an integral canonical model of $\Sh_\levelatp \shimuradatum$
\[ \integralmodel_\levelatp \shimuradatum = \varprojlim_\levelawayp \integralmodel_{\levelatp \levelawayp} \shimuradatum.\]
By definition, this is a scheme over $\localring$ with a continuous right action of $\charzero{G}(\Afp)$ satisfying the following properties:

\begin{enumerate}[label=(\roman*)]
\item $\integralmodel_\levelatp \shimuradatum \otimes_{\localring} \reflexfield$ is $\charzero{G}(\Afp)$-equivariantly isomorphic to $\Sh_\levelatp \shimuradatum$,

\item for sufficiently small $\levelawayp$, $\integralmodel_{\levelatp \levelawayp} \shimuradatum$ is smooth over $\localring$ and the connecting morphisms in $\varprojlim_\levelawayp \integralmodel_{\levelatp \levelawayp} \shimuradatum$ are \'etale,

\item if $Y$ is a regular, formally smooth $\localring$-scheme, every morphism $Y \otimes_{\localring} \reflexfield \to \integralmodel_\levelatp \shimuradatum \otimes_{\localring} \reflexfield$ extends to a morphism $Y \to \integralmodel_\levelatp \shimuradatum$. 
\end{enumerate}
We fix a model $\integralmodel := \integralmodel_\level \shimuradatum$ associated to some sufficiently small subgroup $\levelawayp \subseteq \charzero{G}(\Afp)$. By construction, it comes with a universal abelian scheme $\univabelsch \rightarrow \integralmodel$. For a point $x$ on $\integralmodel$, let $\univabelsch_x$ denote the corresponding abelian scheme, and take $b$ to be the linearization of the Frobenius map on $\D(\univabelsch_x[p])$. Then the tuple $(G, [b], \{\mu\})$ is an unramified local Shimura datum of Hodge type which gives rise to $G$-structure on the $p$-divisible group $\univabelsch_x[p]$.

Let $\reflexresiduefield$ be the residue field of $\localring$. We define the \emph{$\mu$-ordinary locus} to be the set
\[ \integralmodel^\ord := \{ x \in \integralmodel \otimes \reflexresiduefield : \univabelsch_x[p] \text{ is }\mu\text{-ordinary} \}.\]
By the work of Wortmann in \cite{Wortmann13}, we know  that the $\mu$-ordinary locus is open and dense in $\integralmodel \otimes \reflexresiduefield$.

\subsection{Congruence relations}$ $

% We retain the notations of \ref{notations for shimura varieties}. 

\subsubsection{} 

Consider the product $\integralmodel \times \integralmodel$ of $\integralmodel$ with itself over $\localring$. We obtain two abelian schemes $\univabelsch_1, \univabelsch_2 \to \integralmodel \times \integralmodel$ by pulling back the universal abelian scheme $\univabelsch \to \integralmodel$ via the two projections. Then we have a relative scheme
\[ \pisog \to \integralmodel \times \integralmodel\]
which classifies the $p$-isogenies between $\univabelsch_1$ and $\univabelsch_2$ that preserves $G$-structure on the $p$-divisible groups. Define $\pisog^\ord$ to be the inverse image of $\integralmodel^\ord \times \integralmodel^\ord$. We write $\pisoggenfiber$ (resp. $\pisoggenfiber^\ord$) and $\pisogspecialfiber$ (resp. $\pisogspecialfiber^\ord$) for the generic fiber and the special fiber of $\pisog$ (resp. $\pisog^\ord$).

\subsubsection{}

Let $\localring \to \genfield$ be a homomorphism with $\genfield$ a field. If char$(\genfield)=0$, we define $\Q[\pisoggenfiber \otimes \genfield]$ to be the $\Q$-space freely generated by the irreducible components of $\pisoggenfiber \otimes \genfield$. Similarly, if char$(\genfield)=p$, we define $\Q[\pisogspecialfiber^\ord \otimes \genfield]$ to be the $\Q$-space freely generated by the irreducible components of $\pisogspecialfiber^\ord \otimes \genfield$. 

Let us define a $\Q$-algebra structure on these $\Q$-spaces. The two projections of $\integralmodel \times \integralmodel$ gives two morphisms 
\[s, t: \pisog \to \integralmodel,\] 
sending a $p$-isogeny to its source and target, respectively. In addition, the composition of isogenies defines a morphism 
\[ c: \pisog \times_{t, s} \pisog \to \pisog. \]
One can show that these morphisms are proper using the valuative criterion. For two cycles $Y_1, Y_2$ on $\pisog \otimes \genfield$, we define
\[ Y_1 \cdot Y_2 := c_*(Y_1 \times_{t, s} Y_2).\]
This product defines a desired $\Q$-algebra structure on $\Q[\pisoggenfiber \otimes \genfield]$ and $\Q[\pisogspecialfiber^\ord \otimes \genfield]$, as we have the following lemma:
\begin{lemma}
If $Y_1$ and $Y_2$ are irreducible components of $\pisog \otimes \genfield$, then $Y_1 \cdot Y_2$ is a $\Q$-linear combination of irreducible components. 
\end{lemma}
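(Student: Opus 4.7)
The claim is fundamentally a dimension count: I would show that $c_*(Y_1 \times_{t,s} Y_2)$ is a top-dimensional cycle on $\pisog \otimes \genfield$, so that it is automatically a $\Z$-linear combination (and a fortiori a $\Q$-linear combination) of irreducible components.

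The first step I would carry out is to verify that the structure morphisms $s, t : \pisog \to \integralmodel$ are, after restricting to any irreducible component of $\pisog \otimes \genfield$ (resp.\ $\pisogspecialfiber^\ord \otimes \genfield$), finite and flat. A point of $\pisog$ encodes a $p$-isogeny $\phi : A_1 \to A_2$ compatible with the Hodge-type additional structure. For fixed $A_1$, the possible $\phi$ with $\Ker(\phi) \subset A_1[p^N]$ are classified by the finite flat subgroup schemes of $A_1[p^N]$, which form a finite scheme; the tensor-compatibility condition cuts out a closed subscheme. Since the order of $\Ker(\phi)$ is locally constant on $\pisog$, on each irreducible component this degree is fixed, making $s$ (and symmetrically $t$) finite. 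Moreover $s$ is étale on the generic fiber and finite flat on the $\mu$-ordinary locus.

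Next, I would deduce equidimensionality. Since $s$ restricts to a finite surjective morphism from each irreducible component of $\pisog \otimes \genfield$ onto a top-dimensional component of $\integralmodel \otimes \genfield$, each such irreducible component has dimension equal to $\dim \integralmodel \otimes \genfield$. Consequently $Y_1 \times_{t,s} Y_2$, formed using the finite morphisms $t|_{Y_1}$ and $s|_{Y_2}$, is of pure dimension $\dim \integralmodel \otimes \genfield$. The morphism $c$ is proper (by the valuative criterion applied to composition of $p$-isogenies), so $c_*(Y_1 \times_{t,s} Y_2)$ is a well-defined cycle of dimension at most $\dim \pisog \otimes \genfield$. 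A top-dimensional cycle is by definition a $\Z$-linear combination of irreducible components.

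\textbf{Main obstacle.} The principal technical issue lies in the first step: showing that the Hodge-type compatibility condition genuinely produces a finite, not merely proper, fiber for $s$ and $t$, and that the degree of the isogeny is controlled on each irreducible component. In the PEL setting (Moonen \cite{Moonen04}, \S 4) this is transparent because the additional structure is an honest polarization, endomorphism or level datum. In the Hodge-type setting I would invoke the functoriality of the crystalline tensors $(t_i)$ under isogenies together with the fact that their pointwise stabilizer is exactly $G$ (Kisin \cite{Kisin10}, 1.3.2), a fact already used throughout the paper. Once finiteness is secured, the remaining dimension bookkeeping is formal.
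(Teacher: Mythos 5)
Your overall strategy matches the paper's: the lemma reduces to showing that $s$ and $t$, restricted to the relevant locus, are finite and flat, after which the dimension bookkeeping is routine. The paper handles this by a one-line appeal to Moonen \cite{Moonen04}, Lemma 4.2.2, so you have correctly isolated the same key point.

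However, your proposed justification of finiteness has a genuine gap. You claim that for a fixed $A_1$, the finite flat subgroup schemes of $A_1[p^N]$ ``form a finite scheme.'' This is \emph{false} in characteristic $p$ away from the $\mu$-ordinary locus: for instance, if $A_1$ is a supersingular abelian surface with $A_1[p]$ containing $\alpha_p\times\alpha_p$, the order-$p$ subgroup schemes are parametrized by a $\mathbb{P}^1$, not a finite set. This is precisely why the lemma restricts to $\pisogspecialfiber^\ord$ in characteristic $p$, and why the analogous claim on the full special fiber fails. You assert afterwards that $s$ is ``finite flat on the $\mu$-ordinary locus,'' which is the correct statement, but it is exactly the thing that needs to be proved and cannot be derived from the preceding general argument.

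The fix you suggest in your ``Main obstacle'' paragraph --- invoking Kisin's tensor characterization of $G$ --- is also not the right tool: the tensors explain what it means for an isogeny to respect $G$-structure, but they do not by themselves make the moduli of isogenies finite. The actual input is the rigidity of $\mu$-ordinary $p$-divisible groups: a $\mu$-ordinary object of a given type is unique up to isomorphism, admits a canonical slope decomposition preserved by any isogeny, and the isogenies between the isoclinic pieces are determined up to finite data. This is what Moonen proves (and what the generalization in \S4 of this paper supplies in the Hodge-type case), and it is the correct source of finiteness and flatness. Once that ingredient is in place, your dimension count --- each irreducible component of $\pisogspecialfiber^\ord\otimes\genfield$ maps finitely and flatly onto $\integralmodel^\ord\otimes\genfield$, hence has the top dimension; the fiber product $Y_1\times_{t,s}Y_2$ is equidimensional of that dimension; $c$ is proper and preserves dimension of cycle classes --- does indeed finish the proof.
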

\begin{proof}
The proof is essentially identical to the proof for the Siegel modular case or the PEL case. The main point is that the morphisms $s$ and $t$ are finite and flat over $\genfield$ if $\text{char}(\genfield) = p$. See \cite{Moonen04}, Lemma 4.2.2. 
\end{proof}

\subsubsection{} 

Let $q=p^m$ be the cardinality of the residue field $\reflexresiduefield$. We have a section $\frobcorrsection: \integralmodel \otimes \reflexresiduefield \to \pisogspecialfiber$ of the source morphism, sending a point $x \in  \integralmodel \otimes \reflexresiduefield$ to the $m$-th power Frobenius isogeny on $\univabelsch_x$. Let $\frobcorr$ denote its image, which is a closed reduced subscheme of $\pisogspecialfiber$. In fact, it is a union of irreducible components of $\pisogspecialfiber$, as the source morphism $s$ is finite and flat. This allows us to consider $\frobcorr$ as an element of $\Q[\pisogspecialfiber]$, or as an element of $\Q[\pisogspecialfiber^\ord]$. We refer to this element as the Frobenius correspondence.

\subsubsection{}

Let $\heckealg{G}$ be the Hecke algebra of $G$ with respect to its hyperspecial subgroup $G(\Z_p)$. Define $\heckealgsupp{G} \subset \heckealg{G}$ to be the subalgebra of $\Q$-valued functions that have support contained in $G(\Q_p) \cap \End(\module)$. For the centralizer $\levi$ of $\bar{\mu}$ in $G$, we can similarly the Hecke algebras $\heckealgsupp{\levi} \subset \heckealg{\levi}$ (see \cite{Wedhorn00}, \S1). Then we have a homomorphism 
\[ \twistedsatake: \heckealg{G} \to \heckealg{\levi},\]
called the twisted Satake homomorphism. It restricts to a map $\heckealgsupp{G} \to \heckealgsupp{\levi}$, which we denote by the same symbol.

\subsubsection{} \label{type of isogeny}

Take $\genfield$ to be a field containing $\reflexfield$, and let $f: \univabelsch_{x_1} \to \univabelsch_{x_2}$ be an isogeny corresponding to an $\genfield$-valued point of $\pisoggenfiber$. Write $\BT^{(i)} := \univabelsch_{x_i}[p]$ for $i \in \{1, 2\}$. The identification $\integralmodel \otimes_{\localring} \reflexfield \cong \Sh_{\level}\shimuradatum$ gives us identifications of Tate-modules $\alpha_i : \module \stackrel{\sim}{\to} \tatemodule{\BT^{(i)}}$ for $i \in \{1, 2\}$, which are canonical up to the action of an element of $G(\Z_p)$. We also have an induced linear isomorphism $\rattatemodule{f}: \rattatemodule{\BT^{(1)}} \stackrel{\sim}{\to} \rattatemodule{\BT^{(2)}}$ on the rational Tate modules. Then the map 
\[\alpha_2\inv \circ \rattatemodule{f} \circ \alpha_1: \module \otimes \Q_p \stackrel{\sim}{\to} \module \otimes \Q_p\]
is an element of $G(\Q_p)$, and its class in $G(\Z_p) \backslash G(\Q_p) / G(\Z_p)$ is independent of the choice of the $\alpha_i$. We refer to this class as the \emph{type} of the $p$-isogeny $f$.

The type of an isogeny is constant on irreducible components of $\pisoggenfiber$. To every double coset $G(\Z_p) \gamma G(\Z_p)$ with $\gamma \in G(Q_p) \cap \End(\module)$, we associate the sum of all irreducible components of $\pisoggenfiber \otimes \genfield$ where the $p$-isogeny has type $G(\Z_p) \gamma G(\Z_p)$. This defines a map
\[ h: \heckealgsupp{G} \to \Q[\pisoggenfiber \otimes \genfield ].\]
which is a $\Q$-algebra homomorphism.

\subsubsection{} \label{p-type of isogeny}

Let us now take $\genfield$ to be a perfect field containing $\reflexresiduefield$. Let $x$ be a point in $\integralmodel^\ord$ and write $\generalhodge{\BT}$ for the $p$-divisible group $\univabelsch_x[p]$ with $G$-structure. Since $\generalhodge{\BT}$ is $\mu$-ordinariy, it admits a slope decomposition 
\[\generalhodge{\BT} = \generalhodge{\BT}_1 \times \generalhodge{\BT}_2 \times \cdots \times \generalhodge{\BT}_r \]
and the canonical deformation 
\[ \candeform{\BT} = \deform{\generalhodge{\BT}}^\can_1 \times \deform{\generalhodge{\BT}}^\can_2 \times \cdots \times \deform{\generalhodge{\BT}}^\can_r.\]
Then we have a decomposition
\begin{equation}\label{slope decomposition for tate module} 
\tatemodule{\candeform{\BT}} = \tatemodule{\deform{\generalhodge{\BT}}_1} \oplus \tatemodule{\deform{\generalhodge{\BT}}_2} \oplus \cdots \oplus \tatemodule{\deform{\generalhodge{\BT}}_r}.
\end{equation}
On the other hand, we have an identification $\alpha: \module \stackrel{\sim}{\to} \tatemodule{\generalhodge{\deform{\BT}}^\can}$ as in \ref{type of isogeny}. As in the PEL case, one can prove that, after changing $\alpha$ by an element of $G(\Z_p)$, the decomposition \eqref{slope decomposition for tate module} agrees with the eigenspace decomposition of $\module$ with respect to $\bar{\mu}$ (see \cite{Moonen04}, Lemma 4.2.9.). 

Let $f: \univabelsch_{x_1} \to \univabelsch_{x_2}$ be an isogeny corresponding to an $\genfield$-valued point of $\pisoggenfiber$, and write $\BT^{(i)} := \univabelsch_{x_i}[p]$ for $i \in \{1, 2\}$. Choose identifications $\alpha_i : \module \stackrel{\sim}{\to} \tatemodule{\BT^{(i)}}$ for $i \in \{1, 2\}$ as above, and let $\rattatemodule{f}: \rattatemodule{\BT^{(1)}} \stackrel{\sim}{\to} \rattatemodule{\BT^{(2)}}$ be the linear isomorphism induced by $f$. Then the map
\[\alpha_2\inv \circ \rattatemodule{f} \circ \alpha_1: \module \otimes \Q_p \stackrel{\sim}{\to} \module \otimes \Q_p\]
is an element of $\levi(\Q_p)$. We define the \emph{$p$-type} of $f$ to be the class of this map in $\levi(\Z_p) \backslash \levi(\Q_p) / \levi(\Z_p)$, which is independent of the choice of the $\alpha_i$.

The same argument as in \cite{Moonen04}, Lemma 4.2.11. shows that the $p$-type of an isogeny is locally constant on $\pisogspecialfiber$. As in \ref{type of isogeny}, this allows us to define a map 
\[ \bar{h}: \heckealgsupp{\levi} \to \Q[\pisoggenfiber \otimes \genfield ].\]

\begin{theorem} Let $\sigma : \Q[J] \to \Q[\charzero{J}_0^\ord]$ be the homomorphism given by specialization of cycles. Then we have a commutative diagram of $\Q$-algebra homomorphisms
\begin{center}
\begin{tikzpicture}[description/.style={fill=white,inner sep=2pt}]
\matrix (m) [matrix of math nodes, row sep=5em,
column sep=3em, text height=1.5ex, text depth=0.25ex]
{ \heckealgsupp{G} & \Q[\pisoggenfiber] \\
\heckealgsupp{\levi} & \Q[\pisogspecialfiber^\ord] \\
};
\path[-stealth]
    (m-1-1.east) edge node[above]{$h$} (m-1-2)
    (m-1-1.south) edge node[left]{$\twistedsatake$} (m-2-1)
    (m-2-1.east) edge node[above]{$\bar{h}$} (m-2-2)
    (m-1-2.south) edge node[right]{$\sigma$} (m-2-2);
\end{tikzpicture}
\end{center}
\end{theorem}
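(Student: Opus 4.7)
By $\Q$-linearity of all four maps it suffices to verify the identity $\sigma\circ h = \bar h \circ \twistedsatake$ on characteristic functions $\mathbf{1}_{G(\Z_p)\gamma G(\Z_p)}$ with $\gamma\in G(\Q_p)\cap\End(\module)$. The plan is to analyze a single irreducible component $Y$ of $\pisoggenfiber$ on which the generic $p$-isogeny has type $G(\Z_p)\gamma G(\Z_p)$, and to describe $\sigma(Y)\cap(\pisogspecialfiber^\ord)$ purely in Levi terms, so that the result matches the formula for $\twistedsatake(\mathbf{1}_{G(\Z_p)\gamma G(\Z_p)})$ on the nose.

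First I would localize at a closed point of $\pisog$ lying over $\integralmodel^\ord\times\integralmodel^\ord$. Let $\bar y$ be such a point, corresponding to a $p$-isogeny $f_0:\univabelsch_{x_1,0}\to\univabelsch_{x_2,0}$ of $\mu$-ordinary abelian varieties with $G$-structure, and let $y$ be the generic point of a component of $\pisoggenfiber$ specializing to $\bar y$. Since the integral model $\integralmodel$ is formally smooth over $\localring$, the complete local ring at each $x_{i,0}$ is a power series ring, so Proposition \ref{lifting of the slope filtration for hodge type} applies and the slope filtration of $\generalhodge{\BT}^{(i)}:=\univabelsch_{x_{i,0}}[p]$ lifts uniquely to the universal deformation. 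Moreover, by Proposition \ref{rigidity for isoclinic objects hodge type} each isoclinic slope piece $\generalhodge{\BT}^{(i)}_j$ is rigid with $\levi_j$-structure, so the slope pieces of the universal deformation are constant and equal to the canonical lifts $\deform{\generalhodge{\BT}}^\can_{i,j}$. Hence any $p$-isogeny between deformations of $\generalhodge{\BT}^{(1)}$ and $\generalhodge{\BT}^{(2)}$ parametrized along $Y$, lifted via formal smoothness, must respect the slope filtration; in particular its action on Tate modules is realized, after suitable identifications $\alpha_i:\module\stackrel{\sim}{\to}\tatemodule{\deform{\generalhodge{\BT}}^\can_i}$ making the slope decomposition agree with the $\bar\mu$-eigenspace decomposition (as in \ref{p-type of isogeny}), by an element of $\levi(\Q_p)$.

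Given this, I would next show the following matching statement: the specialization $\sigma(Y)$ decomposes, in $\Q[\pisogspecialfiber^\ord]$, as a sum of irreducible components of $p$-type $\levi(\Z_p)\lambda\levi(\Z_p)$ where $\lambda$ runs over the $\levi(\Z_p)$-double cosets inside $\levi(\Q_p)\cap G(\Z_p)\gamma G(\Z_p)$, with multiplicities equal to those appearing in the twisted Satake expansion
\[ \twistedsatake(\mathbf{1}_{G(\Z_p)\gamma G(\Z_p)}) \;=\; \sum_\lambda c_{\gamma,\lambda}\,\mathbf{1}_{\levi(\Z_p)\lambda\levi(\Z_p)}.\]
The coefficient on the geometric side counts the number of lattices in $\module\otimes\Q_p$ of position $\gamma$ relative to $\module$ that are compatible with the $\bar\mu$-grading and have position $\lambda$ inside the grading; this is exactly the combinatorial definition of $c_{\gamma,\lambda}$ recorded in \cite{Wedhorn00}, \S1 (an unramified variant of the Kottwitz--Rapoport formula). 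To pass between the two, one fixes a canonical lift at the source and enumerates the $p$-isogenies to canonical lifts of targets lying in $\integralmodel^\ord$ whose type is $G(\Z_p)\gamma G(\Z_p)$: by the Serre--Tate description of $\Def_{\BT,G}$ in Theorem \ref{serre tate theorem for hodge type with two slopes} and its iterated version for $r\ge 3$, the target canonical lifts are in bijection with the sublattices of the $\bar\mu$-graded module $\module\otimes\Q_p$ giving the prescribed position.

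Finally I would assemble the two sides. Summing over generic points $y$ of components of $\pisoggenfiber$ of type $G(\Z_p)\gamma G(\Z_p)$ and specializing to the $\mu$-ordinary locus, the previous paragraph gives
\[ \sigma\bigl(h(\mathbf{1}_{G(\Z_p)\gamma G(\Z_p)})\bigr) \;=\; \sum_\lambda c_{\gamma,\lambda}\,\bar h(\mathbf{1}_{\levi(\Z_p)\lambda\levi(\Z_p)}) \;=\; \bar h\bigl(\twistedsatake(\mathbf{1}_{G(\Z_p)\gamma G(\Z_p)})\bigr),\]
proving commutativity. The main obstacle is the second step: for the PEL case Moonen (\cite{Moonen04}, \S4) handles it using explicit linear algebra of $\integerring$-stable lattices, and the task here is to show that our Hodge-type deformation theory (Proposition \ref{lifting of the slope filtration for hodge type}, Proposition \ref{rigidity for isoclinic objects hodge type} and Theorem \ref{serre tate theorem for hodge type with two slopes}) is strong enough to replace that linear algebra--specifically, to guarantee that a generic $p$-isogeny specializing to the $\mu$-ordinary locus does preserve the full slope filtration, not merely the slope filtration of some EL realization, and that the resulting element of $\levi(\Q_p)$ really recovers the twisted Satake coefficients rather than a coarser invariant. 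Both issues are resolved using the compatibility $\levi=\EL{\levi}\cap G$ from Lemma \ref{EL realization of hodge-newton type} together with the injectivity of $\levi(\Z_p)\backslash\levi(\Q_p)/\levi(\Z_p)\to\EL{\levi}(\Z_p)\backslash\EL{\levi}(\Q_p)/\EL{\levi}(\Z_p)$, which reduces the combinatorial count for $G$ to the one carried out by Moonen for the EL realization $\EL{G}$.
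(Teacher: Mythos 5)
Your proposal reconstructs, in the Hodge-type setting, essentially the same argument the paper delegates by citation to Chai--Faltings (p.~263) and Moonen (Theorem 4.2.13): reduce to characteristic functions, localize at a $\mu$-ordinary point, use the lifted slope filtration and canonical deformation from \S4, and match the specialization multiplicities against the twisted Satake coefficients by lattice counting. The two technical concerns you flag at the end are precisely where the paper's Hodge-type machinery (the EL realization with $\levi=\EL{\levi}\cap G$, the unique lifting of the slope filtration, and the rigidity of isoclinic pieces) enters, so your plan is consistent with the paper's approach and fills in what the citation leaves implicit.
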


\begin{proof}
The proof is essentially identical to the proof for the Siegel modular case. See \cite{Chai-Faltings90}, p. 263 or \cite{Moonen04}, Theorem 4.2.13.
\end{proof}

%\begin{remark} We expect that $\bar{h}$ is a ring homomorphism, but we did not check this. However, the commutativity of the diagram above ensures this property on the image of $\twistedsatake$, which is all we need for the proof. \end{remark}

\begin{cor} Let $\Phi$ be the Frobenius correspondence on $\integralmodel_0$. Let $H_{\shimuradatum} \in H_0(\charzero{G}, \Q)[t]$ be the Hecke polynomial associated to the Shimura datum $\shimuradatum$, as defined in \cite{Wedhorn00}, \S 2. Regarding $\Q[\charzero{J}^\ord_0]$ as an algebra over $H_0(\charzero{G}, \Q)$ via $\sigma \circ h$, we have the relation $H_{\shimuradatum}(\Phi) = 0$. 
\end{cor}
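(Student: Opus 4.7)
The plan is to combine the commutative diagram of the preceding theorem with the defining property of Wedhorn's Hecke polynomial. Applying the identity $\sigma \circ h = \bar{h} \circ \twistedsatake$ coefficient-wise to $H_\shimuradatum \in \heckealgsupp{\charzero{G}}[t]$ and then evaluating at $t = \Phi$ reduces the assertion to
\[
\bar{h}\bigl(\twistedsatake(H_\shimuradatum)\bigr)\bigl|_{t=\Phi} = 0 \qquad \text{in } \Q[\pisogspecialfiber^\ord],
\]
since by hypothesis the $\heckealgsupp{\charzero{G}}$-algebra structure on $\Q[\pisogspecialfiber^\ord]$ is defined precisely by $\sigma \circ h$.

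The crucial intermediate step is to identify $\Phi$ as the image under $\bar{h}$ of a distinguished element $\ELf_\mu \in \heckealgsupp{\levi}$. At a $\mu$-ordinary point $x$, the Serre-Tate theory developed in \S\ref{serre tate theory for hodge type} (specifically, the canonical deformation $\candeform{\BT}$ of $\univabelsch_x[p^\infty]$ obtained from Proposition \ref{rigidity for isoclinic objects hodge type}) guarantees that the $q$-th power Frobenius isogeny on $\univabelsch_x$, where $q$ is the cardinality of $\reflexresiduefield$, lifts to an isogeny on $\candeform{\BT}$ that preserves the slope decomposition of $\tatemodule{\candeform{\BT}}$ in \eqref{slope decomposition for tate module}. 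By \ref{p-type of isogeny}, after adjusting the trivialization $\alpha$ by an element of $G(\Z_p)$, this decomposition agrees with the $\bar{\mu}$-eigenspace decomposition of $\module$. A direct calculation, formally identical to the PEL case \cite{Moonen04}, Proposition 4.2.12, then shows that the $p$-type of $\Phi$ is the double coset of $\levi(\Q_p)$ containing the canonical Frobenius element attached to $\bar{\mu}$, yielding $\Phi = \bar{h}(\ELf_\mu)$ with $\ELf_\mu$ the characteristic function of this double coset.

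The proof concludes by invoking \cite{Wedhorn00}, \S 2: the Hecke polynomial $H_\shimuradatum$ is constructed precisely so that $\twistedsatake(H_\shimuradatum) \in \heckealgsupp{\levi}[t]$ has $\ELf_\mu$ as a root. Substituting this back into the displayed identity gives $H_\shimuradatum(\Phi) = 0$ as claimed.

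The main obstacle is the explicit identification of $\Phi$ as $\bar{h}(\ELf_\mu)$ in the Hodge type setting. In the PEL case this computation is carried out directly on the Tate module via the $\integerring$-action, but in the Hodge type setting one cannot compute the $p$-type naively since the $\sigma$-linear Frobenius does not \emph{a priori} define a linear double coset in $\levi(\Q_p)$. It is precisely the canonical deformation produced by the Serre-Tate theory of this paper, together with the compatibility between its slope decomposition and the $\bar{\mu}$-eigenspace decomposition of $\module$, that makes this identification possible. Once this matching with Wedhorn's element $\ELf_\mu$ is secured, the remainder of the argument is a formal consequence of the commutative diagram.
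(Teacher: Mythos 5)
Your proposal is correct and follows essentially the same route as the paper, which reduces the statement to Moonen's Corollary 4.2.14 together with the commutative diagram of the preceding theorem. One precision worth flagging: the assertion that $\twistedsatake(H_{\shimuradatum})$ has the distinguished double-coset element as a root is not a defining property of the Hecke polynomial, which is instead built from the representation theory of the dual group; rather it is a non-trivial group-theoretic theorem due to B\"ultel, which is exactly the input the paper singles out beyond the commutative diagram. Your phrase ``constructed precisely so that'' glosses over this, though it does not affect the overall logic of your argument.
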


\begin{proof}
This is a direct consequence of the theorem together with some purely group theoretic results due to B\"ultel. See \cite{Moonen04}, Corollary 4.2.14. 
\end{proof}

\begin{cor} If $\charzero{J}_0^\ord$ is Zariski dense in $\charzero{J}_0$ then the relation 
$H_{\shimuradatum}(\Phi) = 0$ holds in the algebra $\Q[\charzero{J}_0]$, viewed as an algebra over $H_0(\charzero{G}, \Q)$ via $\sigma \circ h$. 
\end{cor}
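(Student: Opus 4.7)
The plan is to deduce this from the preceding corollary by a density argument on irreducible components. The preceding corollary gives $H_{\shimuradatum}(\Phi) = 0$ in $\Q[\charzero{J}_0^\ord]$, and the task is to lift this identity along the specialization map $\sigma : \Q[\charzero{J}_0] \to \Q[\charzero{J}_0^\ord]$.

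First, I would unwind the definition of $\sigma$. On basis elements, it sends an irreducible component $Z$ of $\charzero{J}_0$ to the cycle associated with $Z \cap \charzero{J}_0^\ord$. Since $\charzero{J}_0^\ord$ is an open subscheme of $\charzero{J}_0$, this intersection is either empty or a dense open subscheme of $Z$, in which case it is irreducible and hence appears as a single basis vector in $\Q[\charzero{J}_0^\ord]$. Conversely, every irreducible component $Z'$ of $\charzero{J}_0^\ord$ has its Zariski closure in $\charzero{J}_0$ equal to a unique irreducible component.

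Next, under the hypothesis that $\charzero{J}_0^\ord$ is Zariski dense in $\charzero{J}_0$, no irreducible component $Z$ of $\charzero{J}_0$ can be disjoint from $\charzero{J}_0^\ord$: otherwise $\charzero{J}_0^\ord$ would be contained in the union of the other irreducible components, a proper closed subset, contradicting density. Combined with the previous paragraph, the assignment $Z \mapsto Z \cap \charzero{J}_0^\ord$ is a bijection between the irreducible components of $\charzero{J}_0$ and those of $\charzero{J}_0^\ord$. Hence $\sigma$ sends a basis of $\Q[\charzero{J}_0]$ bijectively onto a basis of $\Q[\charzero{J}_0^\ord]$, and is in particular injective as a $\Q$-linear map.

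Finally, $\sigma$ is a $\Q$-algebra homomorphism with respect to the convolution product defined via the source and target morphisms (these morphisms restrict properly to the open ordinary locus, so flat pullback and proper pushforward commute with restriction to $\charzero{J}_0^\ord$). Therefore, viewing $\Q[\charzero{J}_0]$ as an $\heckealgsupp{G}$-algebra via $h$ and $\Q[\charzero{J}_0^\ord]$ via $\sigma \circ h$, we have $\sigma\bigl(H_{\shimuradatum}(\Phi)\bigr) = H_{\shimuradatum}\bigl(\sigma(\Phi)\bigr)$, which vanishes by the preceding corollary. Injectivity of $\sigma$ then forces $H_{\shimuradatum}(\Phi) = 0$ in $\Q[\charzero{J}_0]$, as required. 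The only mildly delicate point is the component bookkeeping, but given the ordinary locus is open this is essentially formal; the Zariski density hypothesis is exactly what rules out ``spurious'' components of $\charzero{J}_0$ that could otherwise sit in the kernel of $\sigma$.
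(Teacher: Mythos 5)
Your argument is correct in substance, and since the paper states this corollary without proof your write-up supplies the natural justification: density forces every irreducible component $Z$ of $\pisogspecialfiber$ to meet the open subscheme $\pisogspecialfiber^\ord$, the assignment $Z \mapsto Z \cap \pisogspecialfiber^\ord$ is then a bijection on irreducible components (with inverse given by Zariski closure), so the restriction map $\Q[\pisogspecialfiber] \to \Q[\pisogspecialfiber^\ord]$ carries a basis bijectively to a basis and is in particular injective; once one knows it is also a homomorphism of $\heckealgsupp{G}$-algebras, the relation $H_{\shimuradatum}(\Phi)=0$ descends from $\Q[\pisogspecialfiber^\ord]$ to $\Q[\pisogspecialfiber]$.

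Two points are worth tightening. First, the map you keep calling $\sigma$ is the \emph{restriction} (pullback along the open immersion $\pisogspecialfiber^\ord \hookrightarrow \pisogspecialfiber$), not the paper's $\sigma$: in the text, $\sigma$ denotes specialization of cycles from the generic fiber $\Q[\pisoggenfiber]$ to $\Q[\pisogspecialfiber^\ord]$ (implicitly factored through $\Q[\pisogspecialfiber]$ so that $\sigma\circ h$ lands in $\Q[\pisogspecialfiber]$ for the present corollary). Your injectivity assertion is about the restriction map, which is the right map to use, but it should not be conflated with the specialization $\sigma$. Second, your parenthetical justification that restriction respects the convolution product should be spelled out: the key fact is that $\mu$-ordinariness is an isogeny invariant (it is a condition on the Newton point), so the composition morphism $c$ satisfies $c\inv(\pisogspecialfiber^\ord) = \pisogspecialfiber^\ord \times_{t,s} \pisogspecialfiber^\ord$, and likewise $s$ and $t$ restrict to finite flat morphisms over $\integralmodel^\ord$; this is what makes the proper pushforward commute with restriction and turns the restriction map into a ring homomorphism. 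With those two clarifications the proof is complete.
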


\bigskip

%---------------------------------------------------------------%

%\Closesolutionfile{\jobname}
%\Readsolutionfile{\jobname}

\bibliographystyle{chicagonr}
\bibliography{strings,kcb}

 \end{document}